\newtheorem{property}[theorem]{Property}
\newtheorem{algo}[theorem]{Algorithm}
\newtheorem{cor}[theorem]{Corollary}
\newtheorem{rem}[theorem]{Remark}
\newtheorem{ex}[theorem]{Example}
\DeclareMathOperator{\pen}{pen}
\DeclareMathOperator{\id}{id}
\DeclareMathOperator{\tr}{tr}
\DeclareMathOperator{\df}{df}
\DeclareMathOperator{\vect}{vec}
\DeclareMathOperator{\ind}{ind}
\DeclareMathOperator{\var}{Var}
\DeclareMathOperator{\card}{card}
\DeclareMathOperator{\crit}{crit}
\DeclareMathOperator{\diag}{Diag}
\newcommand{\R}{\mathbb{R}}
\newcommand{\E}{\mathbb{E}}
\newcommand{\NN}{\mathbb{N}}
\newcommand{\X}{\mathcal{X}}
\newcommand{\F}{\mathcal{F}}
\newcommand{\G}{\mathcal{G}}
\newcommand{\D}{\mathcal{D}}
\renewcommand{\P}{\mathcal{P}}
\newcommand{\N}{\mathcal{N}}
\newcommand{\M}{\mathcal{M}}
\renewcommand{\a}{\alpha}
\newcommand{\e}{\varepsilon}
\newcommand{\s}{\sigma}
\renewcommand{\r}{\rho}
\renewcommand{\th}{\theta}
\renewcommand{\b}{\beta}
\renewcommand{\S}{\Sigma}
\renewcommand{\l}{\lambda}
\renewcommand{\O}{\Omega}
\renewcommand{\d}{\delta}
\newcommand{\esp}[1]{\mathbb{E} \left[ #1 \right]}
\newcommand{\linspan}[1]{\textrm{span} \left\{ #1 \right\}}
\newcommand{\nor}[1]{|\!|\!| #1 |\!|\!|}
\newcommand{\scal}[2]{\langle #1 , #2 \rangle}
\newcommand{\trsp}{^{\top}}
\newcommand{\norr}[1]{\left\| #1 \right\|_2 }
\renewcommand{\hat}[1]{\widehat{#1}}
\renewcommand{\tilde}[1]{\widetilde{#1}}
\newcommand{\pt}{\enspace .}
\newcommand{\virg}{\enspace ,}
\newcommand{\Eq}[1]{\textrm{Equation}~\eqref{#1}}
\newcommand{\fh}{\widehat{f}}
\newcommand{\Sh}{\widehat{\Sigma}}
\newcommand{\Mh}{\widehat{M}}
\newcommand{\argmin}[1]{\underset{#1}{\operatorname{argmin}}}
\newcommand{\Proba}{\mathbb{P}}
\newcommand{\Ch}{\widehat{C}}
\newcommand{\sigh}{\widehat{\sigma}}
\newcommand{\multitask}{\mathrm{similar}}
\newcommand{\singletask}{\mathrm{ind}}
\newcommand{\simple}{\mathrm{simplified}}
\newcommand{\Mmultiens}{\M_{\multitask}}
\newcommand{\Mmulti}{M_{\multitask}}
\newcommand{\clustering}{\mathrm{clus}}
\newcommand{\segmentation}{\mathrm{interval}}
\newcommand{\Mclustering}{\M_{\clustering}}
\newcommand{\Msegmentation}{\M_{\segmentation}}
\newcommand{\MI}{M_{I}}
\newcommand{\HM}{\textrm{HM}}
\newcommand{\egaldef}{:=}
\newcommand{\paren}[1]{\left( #1 \right)}
\newcommand{\sparen}[1]{(#1)}
\newcommand{\croch}[1]{\left[ #1 \right]}
\newcommand{\scroch}[1]{[#1]}
\newcommand{\set}[1]{\left\{ #1 \right\}}
\newcommand{\sset}[1]{\{ #1 \}}
\newcommand{\absj}[1]{\left\lvert #1 \right\rvert}
\newcommand{\mutilde}{\tilde{\mu}}
\providecommand{\norm}[1]{\left \lVert #1 \right\rVert}
\providecommand{\snorm}[1]{\lVert #1 \rVert}
\title{Multi-task Regression using Minimal Penalties}
\author{\name Matthieu Solnon \email matthieu.solnon@ens.fr  \\
\addr  ENS; Sierra Project-team\\
D\'epartement d'Informatique de l'\'Ecole Normale Sup\'erieure \\
(CNRS/ENS/INRIA UMR 8548)\\
23, avenue d'Italie, CS 81321 \\
75214 Paris Cedex 13, France
\AND
\name Sylvain Arlot \email sylvain.arlot@ens.fr \\
\addr  CNRS; Sierra Project-team\\
D\'epartement d'Informatique de l'\'Ecole Normale Sup\'erieure \\
(CNRS/ENS/INRIA UMR 8548)\\
23, avenue d'Italie, CS 81321 \\
75214 Paris Cedex 13, France
\AND
\name Francis Bach \email francis.bach@ens.fr \\
\addr  INRIA; Sierra Project-team\\
D\'epartement d'Informatique de l'\'Ecole Normale Sup\'erieure \\
(CNRS/ENS/INRIA UMR 8548)\\
23, avenue d'Italie, CS 81321 \\
75214 Paris Cedex 13, France
}
\begin{document}
\maketitle
\begin{abstract}%
  In this paper we study the kernel  multiple ridge regression framework, which we refer to as multi-task regression, using penalization techniques. The theoretical analysis of this problem shows that the key element appearing for an optimal calibration is the covariance matrix of the noise between the different tasks. We present a new algorithm to estimate this covariance matrix, based on the concept of minimal penalty, which was previously used in the single-task regression framework to estimate the variance of the noise. We show, in a non-asymptotic setting and under mild assumptions on the target function, that this estimator converges towards the covariance matrix. Then plugging this estimator into the corresponding ideal penalty leads to an oracle inequality. We illustrate the behavior of our algorithm on synthetic examples.
\end{abstract}
\begin{keywords}
  multi-task, oracle inequality, learning theory
\end{keywords}

\section{Introduction}
A classical paradigm in statistics is that increasing the sample size (that is, the number of observations) improves the performance of the estimators. However, in some cases it may be impossible to increase the sample size, for instance because of experimental limitations. Hopefully, in many situations practicioners can find many related and similar problems, and might use these problems as if more observations were available for the initial problem. The techniques using this heuristic are called ``multi-task'' techniques. In this paper we study the kernel ridge regression procedure in a multi-task framework.

One-dimensional kernel ridge regression, which we refer to as ``single-task'' regression, has been widely studied. As we briefly review in Section~\ref{simple_reg}  one has, given $n$ data points $(X_i,Y_i)_{i=1}^n$, to estimate a function $f$, often the conditional expectation $f(X_i) = \mathbb{E}[Y_i|X_i]$, by minimizing the quadratic risk of the estimator regularized by a certain norm.  A practically important task is to calibrate a regularization parameter, that is, to estimate the regularization parameter directly from data. For kernel ridge regression (a.k.a.~smoothing splines), many methods have been proposed based on different principles, for example, Bayesian criteria through a Gaussian process interpretation \citep[see, e.g.,][]{GP} or generalized cross-validation \citep[see, e.g.,][]{Wah:1990}. In this paper, we focus on the concept of minimal penalty, which was first introduced by \citet{BirgeMassart_min_pen} and \citet{Arl_Mas:2009:pente} for model selection, then extended to linear estimators such as kernel ridge regression by \citet{Arl_Bac:2009:minikernel_long}.

 In this article we consider $p \geq 2$ different (but related) regression tasks, a framework we refer to as ``multi-task'' regression. This setting has already been studied in different papers. Some   empirically show that it can lead to performance improvement  \citep{Thrun96g,Caruana:1997:ML:262868.262872,Bakker:2003:TCG:945365.945370}. \citet{liang10regularization} also obtained a theoretical  criterion (unfortunately non observable) which tells when this phenomenon asymptotically occurs. Several different paths have been followed to deal with this setting. Some consider a setting where $p \gg n$, and formulate a sparsity assumption which enables to use the group Lasso, assuming all the different functions have a small set of common active covariates \citep[see for instance][]{Obozinski_Wainwright_Jordan_2011,Lounici:1277345}. We exclude this setting from our analysis, because of the Hilbertian nature of our problem, and thus will not consider the similarity between the tasks in terms of sparsity, but rather in terms of an Euclidean similarity.   Another theoretical approach has also been taken (see for example, \citet{Brown_Zidek_1980}, \citet{Evgeniou} or \citet{Ando:2005:FLP:1046920.1194905} on semi-supervised learning), the authors often defining a theoretical framework where the multi-task problem can easily be expressed, and where sometimes solutions can be computed. The main remaining theoretical problem  is the calibration of a matricial parameter $M$ (typically of size~$p$), which characterizes the relationship between the tasks and extends the regularization parameter from single-task regression. Because of the high dimensional nature of the problem (i.e., the small number of training observations) usual techniques, like cross-validation, are not likely to succeed. \citet{DBLP:journals/ml/ArgyriouEP08} have a similar approach to ours, but solve this problem by adding a convex constraint to the matrix, which will be discussed at the end of Section \ref{ineg_oracle}.

  Through a penalization technique we show in Section \ref{sec_anal_th} that the only element we have to estimate is the correlation matrix $\S$ of the noise between the tasks. We give here a new algorithm to estimate $\S$, and show that the estimation is sharp enough to derive an oracle inequality for the estimation of the task similarity matrix $M$, both with high probability and in expectation. Finally we give some simulation experiment results and show that our technique correctly deals with the multi-task settings with a low sample-size.

\subsection{Notations} We now introduce some notations, which will be used throughout the article.

\begin{itemize}
\item The integer $n$ is the sample size, the integer $p$ is the number of tasks.
\item For any $n \times p$ matrix $Y$, we define
\[ y = \vect(Y) \egaldef \paren{ Y_{1,1}, \ldots, Y_{n,1}, Y_{1,2}, \ldots, Y_{n,2}, \ldots, Y_{1,p}, \ldots, Y_{n,p} } \in \R^{np}, \]
that is, the vector in which the columns $Y^j \egaldef (Y_{i,j})_{1 \leq i \leq n}$ are stacked.
\item $\mathcal{M}_n(\R)$ is the set of all matrices of size $n$.
\item $\mathcal{S}_p(\R)$ is the set of symmetric matrices of size $p$.
\item $\mathcal{S}_p^+(\R)$ is the set of symmetric positive-semidefinite matrices of size $p$.
\item $\mathcal{S}_p^{++}(\R)$ is the set of symmetric positive-definite matrices of size $p$.
\item $\preceq$ denotes the partial ordering on $\mathcal{S}_p(\R)$ defined by: $A \preceq B$ if and only if $B-A \in \mathcal{S}_p^+(\R)$.
\item $\mathbf{1}$ is the vector of size $p$ whose components are all equal to $1$.
\item $\norr{\cdot}$ is the usual Euclidean norm on $\R^{k}$ for any $k \in \NN$: $\forall u \in \R^k$, $\norr{u}^2 \egaldef \sum_{i=1}^k u_i^2$.
\end{itemize}

\section{Multi-task Regression: Problem Set-up \label{sec_anal_th}}

We consider $p$ kernel ridge regression tasks. Treating them simultaneously and sharing their common structure (e.g., being close in some metric space) will help in reducing the overall prediction error.

\subsection{Multi-task with a Fixed Kernel}

Let $\X$ be some set and $\F$ a set of real-valued functions over $\X$. We suppose $\F$ has a reproducing kernel Hilbert space (RKHS) structure \citep{Aronszajn}, with kernel $k$ and feature map $\Phi: \X \to \F$.  We observe $\D_n = (X_i,Y_i^1,\ldots,Y_i^p)_{i=1}^n \in (\X \times  \R^p)^{n}$, which gives us the positive semidefinite kernel matrix $K = (k(X_i,X_{\ell}))_{1 \leq i , \ell \leq n} \in \mathcal{S}_n^+(\R)$.
For each task $j \in \{ 1,\ldots,p\}$, $\D_n^j = (X_i,y_i^j)_{i=1}^n$ is a sample with distribution $\P_j$, for which a simple regression problem has to be solved.
In this paper we consider for simplicity that the different tasks have the same design $(X_i)_{i=1}^n$. When the designs of the different tasks are different the analysis is carried out similarly by defining $X_i = (X_i^1,\dots,X_i^p)$, but the notations would be more complicated.

We now define the model. We assume $(f^1,\ldots,f^p) \in \F^p$, $\S$ is a symmetric positive-definite matrix of size $p$ such that the vectors $(\e_i^j)_{j=1}^p$ are i.i.d.~with normal distribution $\N(0,\S)$, with mean zero and covariance matrix $\Sigma$, and
\begin{equation} \label{modele}
  \forall i \in \{ 1,\ldots,n\},\forall j \in \{ 1,\ldots,p\},  ~  y_i^j = f^j(X_i) + \e_i^j \pt
\end{equation}
This means that, while the observations are independent, the outputs of the different tasks can be correlated, with correlation matrix $\S$ between the tasks. We now place ourselves in the fixed-design setting, that is, $(X_i)_{i=1}^n$ is deterministic and the goal is to estimate $\left( f^1(X_i),\dots,f^p(X_i) \right)_{i=1}^n $.
Let us introduce some notation:
\begin{itemize}
\item $\mu_{\min} = \mu_{\min}(\S)$ (resp.~$\mu_{\max}$) denotes the smallest (resp.~largest) eigenvalue of $\S$.
\item $c(\S) \egaldef \mu_{\max} / \mu_{\min}$ is the condition number of $\S$.
\end{itemize}
To obtain compact equations, we will use the following definition:
\begin{definition}
  We denote by $F$ the $n \times p$ matrix $(f^j(X_i))_{1 \leq i \leq n \, , \, 1 \leq j \leq p}$ and introduce the vector  $f \egaldef \vect(F) = ( f^1(X_1),\dots,f^1(X_n),\dots, f^p(X_1),\dots,f^p(X_n) )  \in \R^{np}$, obtained by stacking the columns of $F$.
  Similarly we define $Y \egaldef (y_i^j)\in \M_{n\times p}(\R)$, $y \egaldef \vect(Y)$, $E\egaldef(\e_i^j)\in \M_{n\times p}(\R)$ and $\e \egaldef \vect(E)$.
\end{definition}

In order to estimate $f$, we use a regularization procedure, which extends the classical ridge regression of the single-task setting. Let $M$ be a  $p\times p$ matrix, symmetric and positive-definite. Generalizing the work of \citet{Evgeniou}, we estimate $(f^1,\dots,f^p)\in\F^p$ by
\begin{equation}\label{min_multi_M}
  \fh_M \in \argmin{g \in \F^p} \set{ \frac{1}{np} \sum_{i=1}^n \sum_{j=1}^p (y_i^j - g^j(X_i))^2 + \sum_{j=1}^p \sum_{\ell=1}^p M_{j,l}\scal{g^j}{g^{\ell}}_{\F} } \pt
\end{equation}
 Although $M$ could have a general unconstrained form we may restrict $M$ to certain forms, for either computational or statistical reasons.

\begin{rem}
  Requiring that $M \succeq 0$ implies that \Eq{min_multi_M} is a convex optimization problem, which can be solved through the resolution of a linear system, as explained later. Moreover it allows an RKHS interpretation, which will also be explained later.
\end{rem}

\begin{ex}\label{example_ind}
  The case where the $p$ tasks are treated independently can be considered in this setting: taking $M = M_{\ind}(\l) \egaldef \frac{1}{p}\diag(\l_1,\dots,\l_p)$ for any $\l \in \R^p$ leads to the criterion
  \begin{equation} \label{eq.example_ind.crit}
    \frac{1}{p} \sum_{j=1}^p \croch{  \frac{1}{n} \sum_{i=1}^n (y_i^j - g^j(X_i))^2 + \l_j \|g^j\|_{\F}^2 } \virg
  \end{equation}
  that is, the sum of the single-task criteria described in Section~\ref{simple_reg}. Hence, minimizing \Eq{eq.example_ind.crit} over $\l \in \R^p$ amounts to solve \emph{independently} $p$ single task problems.
\end{ex}
\begin{ex} \label{example_Mlm}
As done by \citet{Evgeniou}, for every $\l,\mu \in \left(0,+\infty\right)^2$, define
$$
\hspace{-1cm} \Mmulti(\l,\mu) \egaldef (\l+p\mu) I_p -  \mu \boldsymbol{1}\boldsymbol{1}\trsp = \begin{pmatrix}\l+(p-1)\mu & & -\mu \\ & \ddots &  \\ -\mu & & \l+(p-1)\mu \end{pmatrix} \pt
$$
Taking $M=\Mmulti(\l,\mu)$ in \Eq{min_multi_M} leads to the criterion
\begin{equation} \label{eq.example_Mlm.crit}
\frac{1}{np} \sum_{i=1}^n \sum_{j=1}^p (y_i^j - g^j(X_i))^2 + \l \sum_{j=1}^p \norm{g^j}_{\F}^2 + \frac{\mu}{2} \sum_{j=1}^p \sum_{k=1}^p \norm{g^j-g^k}_{\F}^2 \pt
\end{equation}
Minimizing \Eq{eq.example_Mlm.crit} enforces a regularization on both the norms of the functions $g^j$ and the norms of the differences $g^j-g^k$. Thus, matrices of the form $\Mmulti(\l,\mu)$ are useful when the functions $g^j$ are assumed to be similar in $\F$. One of the main contributions of the paper is to go beyond this case and learn from data a more general similarity matrix $M$ between tasks.
\end{ex}
\begin{ex} \label{example_clustering}
We extend Example \ref{example_Mlm} to the case where the $p$ tasks consist of two groups of close tasks. Let $I$ be a subset of $\{1,\dots,p\}$, of cardinality $1\leq k \leq p-1$. Let us denote by $I^c$ the complementary of $I$ in  $\{1,\dots,p\}$,  $\boldsymbol{1}_{I}$ the vector $v$ with components $v_i = \boldsymbol{1}_{i \in I}$, and $\diag(I)$ the diagonal matrix $d$ with components $d_{i,i} = \boldsymbol{1}_{i \in I}$. We then define
$$
  \MI(\l,\mu,\nu) \egaldef \l I_p+\mu \diag(I)+\nu\diag(I^c)- \frac{\mu}{k} \boldsymbol{1}_{I}\boldsymbol{1}_{I}\trsp -\frac{\nu}{p-k} \boldsymbol{1}_{I^c}\boldsymbol{1}_{I^c}\trsp \pt
$$
This matrix leads to the following criterion, which enforces a regularization on both the norms of the functions $g^j$ and the norms of the differences $g^j-g^k$ inside the groups $I$ and~$I^c$:
\begin{equation} \label{eq.example_clus.crit}
\frac{1}{np} \sum_{i=1}^n \sum_{j=1}^p (y_i^j - g^j(X_i))^2 + \l \sum_{j=1}^p \norm{g^j}_{\F}^2 + \frac{\mu}{2k} \sum_{j\in I} \sum_{k\in I} \norm{g^j-g^k}_{\F}^2+ \frac{\nu}{2(p-k)} \sum_{j\in I^c} \sum_{k\in I^c} \norm{g^j-g^k}_{\F}^2 \pt
\end{equation}
As shown in Section \ref{simulations}, we can estimate the set $I$ from data (see \citealp{jacob:clustered} for a more general formulation).
\end{ex}
\begin{rem}\label{rem_easytooptimize}
Since $I_p$ and $\boldsymbol{1}\boldsymbol{1}\trsp$ can be diagonalized simultaneously, minimizing \Eq{eq.example_Mlm.crit} and \Eq{eq.example_clus.crit} is quite easy: it only demands optimization over two independent parameters, which can be done with the procedure of \citet{Arl_Bac:2009:minikernel_long}.
\end{rem}

\begin{rem}
As stated below (Proposition~\ref{prodscalG}), $M$ acts as a scalar product between the tasks. Selecting a general matrix $M$ is thus a way to express a similarity between tasks.
\end{rem}
Following \citet{Evgeniou}, we define the vector-space $\G$ of real-valued functions over $\X \times \{1,\dots,p\}$ by
\begin{equation*}
  \G \egaldef \set{ g: \X \times \{1,\dots,p\} \to \R \, / \,
  \forall j \in \set{1,\dots,p} \, , \, g(\cdot,j) \in \F } \pt
\end{equation*}
We now define a bilinear symmetric form over $\G$,
\begin{equation*}
 \forall g, h \in \G \virg \quad  \scal{g}{h}_{\G} \egaldef \sum_{j=1}^p \sum_{l=1}^p M_{j,l}\scal{g(\cdot,j)}{h(\cdot,l)}_{\F} ,
 \end{equation*}
which is a scalar product as soon as $M$ is positive semi-definite (see proof in Appendix~\ref{dem_prop_1}) and leads to a RKHS
(see proof in Appendix~\ref{dem_cor_RKHS}):
\begin{proposition}\label{prodscalG}
With the preceding notations $\scal{\cdot}{\cdot}_{\G}$ is a scalar product on $\G$.
\end{proposition}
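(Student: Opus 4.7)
My plan is to verify the three defining axioms of a scalar product: bilinearity, symmetry, and positive-definiteness. Bilinearity is immediate from the bilinearity of $\scal{\cdot}{\cdot}_{\F}$ in each argument, and symmetry follows from $M_{j,l} = M_{l,j}$ combined with the symmetry of $\scal{\cdot}{\cdot}_{\F}$. So the only nontrivial content is the positive-definiteness of $\scal{\cdot}{\cdot}_{\G}$, which uses in an essential way the assumption that $M$ is symmetric positive-definite.

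For this step, the idea I would use is to diagonalize $M$, or equivalently to take its symmetric positive-definite square root $M^{1/2}$ (which exists because $M \in \mathcal{S}_p^{++}(\R)$). Given $g \in \G$, with coordinates $g^j \egaldef g(\cdot,j) \in \F$, I would introduce the auxiliary functions
\[
h^l \egaldef \sum_{j=1}^p M^{1/2}_{l,j}\, g^j \in \F \virg \quad l \in \set{1,\dots,p} \pt
\]
A direct expansion using the symmetry of $M^{1/2}$ and the identity $M^{1/2} M^{1/2} = M$ gives
\[
\sum_{l=1}^p \norm{h^l}_{\F}^2 = \sum_{j,k=1}^p \paren{\sum_{l=1}^p M^{1/2}_{l,j} M^{1/2}_{l,k}} \scal{g^j}{g^k}_{\F} = \sum_{j,k=1}^p M_{j,k} \scal{g^j}{g^k}_{\F} = \scal{g}{g}_{\G} \virg
\]
hence $\scal{g}{g}_{\G} \geq 0$ for every $g \in \G$.

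To conclude, if $\scal{g}{g}_{\G} = 0$, then $h^l = 0$ in $\F$ for every $l$. This can be rewritten as $M^{1/2} v = 0$, where $v \in \F^p$ is the column vector $(g^1,\dots,g^p)\trsp$ (the matrix-vector product being performed coordinate-wise in $\F$). Since $M$ is positive-definite, $M^{1/2}$ is invertible, so $v = 0$, i.e., $g^j = 0$ in $\F$ for every $j$, i.e., $g = 0$ in $\G$. I do not expect any real obstacle here: the only subtle point is ensuring that positive-definiteness (and not merely positive semi-definiteness) of $M$ is what gives the strict definiteness of $\scal{\cdot}{\cdot}_{\G}$, which is precisely where invertibility of $M^{1/2}$ is used.
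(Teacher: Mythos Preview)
Your proof is correct and takes essentially the same approach as the paper: both arguments introduce the symmetric positive-definite square root $S = M^{1/2}$ and rewrite $\scal{g}{g}_{\G}$ as a sum of squared $\F$-norms of the transformed coordinates $S g$. The paper presents this via the inverse relation $g(\cdot,i) = \sum_k (S^{-1})_{i,k} f(\cdot,k)$ and then computes $\scal{g}{g}_{\G} = \sum_k \norm{f(\cdot,k)}_{\F}^2$, which is exactly your identity with $f = h$; your formulation (defining $h = M^{1/2} g$ directly and expanding $\sum_l \norm{h^l}_{\F}^2$) is slightly more streamlined but mathematically identical.
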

\begin{cor}\label{cor_GRKHS} $ (\G,\scal{\cdot}{\cdot}_{\G})$ is a RKHS.
\end{cor}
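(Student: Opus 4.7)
The plan is to exploit the observation that $\G$ is canonically isomorphic (as a vector space) to $\F^p$ via $g \mapsto (g_1,\dots,g_p)$ with $g_j \egaldef g(\cdot,j)$, and that under this identification $\scal{\cdot}{\cdot}_{\G}$ is an inner product equivalent to the standard product inner product $\sum_j \scal{\cdot}{\cdot}_{\F}$. Specifically, writing $G \in \mathcal{S}_p^+(\R)$ for the Gram matrix $G_{j,\ell}=\scal{g_j}{g_\ell}_{\F}$, we have $\snorm{g}_{\G}^2 = \tr(MG)$, and since $M \in \mathcal{S}_p^{++}(\R)$ with eigenvalues $0<\mu_{\min}(M)\le\mu_{\max}(M)$, the inequality
\[
\mu_{\min}(M)\sum_{j=1}^p \snorm{g_j}_{\F}^2 \le \snorm{g}_{\G}^2 \le \mu_{\max}(M)\sum_{j=1}^p \snorm{g_j}_{\F}^2
\]
holds. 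This is the only nontrivial linear-algebraic input and is immediate from $\mu_{\min}(M)\tr(G)\le \tr(MG)\le \mu_{\max}(M)\tr(G)$ for $G\succeq 0$.

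From this equivalence, completeness of $(\G,\scal{\cdot}{\cdot}_{\G})$ follows directly. If $(g^{(n)})_n$ is Cauchy in $\G$, then for each $j$, $(g^{(n)}_j)_n$ is Cauchy in $\F$ by the lower bound, so it converges to some $g^{*}_j\in\F$ since $\F$ is complete. Defining $g^*(x,j)\egaldef g^*_j(x)$ gives an element of $\G$, and the upper bound shows $g^{(n)}\to g^*$ in $\G$-norm.

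Next I would check that point evaluations are continuous. For any $(x,j)\in\X\times\sset{1,\dots,p}$, the reproducing property of $\F$ gives $\absj{g(x,j)} = \absj{g_j(x)} \le \sqrt{k(x,x)}\,\snorm{g_j}_{\F}$, and then the lower bound above yields
\[
\absj{g(x,j)} \le \sqrt{\frac{k(x,x)}{\mu_{\min}(M)}}\snorm{g}_{\G}.
\]
This continuity, combined with completeness, is exactly the RKHS property (via Riesz).

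Finally, to make the statement constructive, I would exhibit the reproducing kernel explicitly: set $K_{\G}\bigl((x,j),(x',j')\bigr) \egaldef (M^{-1})_{j,j'}\,k(x,x')$, and verify by a direct computation that $\scal{g}{K_{\G}((x,j),\cdot)}_{\G}=g(x,j)$, using the reproducing property of $k$ inside $\F$ and the identity $MM^{-1}=I_p$ (symmetry of $M$ ensures $M^{-\top}=M^{-1}$ so the double sum collapses to $\delta_{j,\ell}$). I do not expect a real obstacle here; the only thing to be careful about is keeping the indices $(j,\ell)$ from the $M$-weighting separate from the indices defining the evaluation point, which is a bookkeeping matter rather than a mathematical one.
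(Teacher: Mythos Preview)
Your proof is correct. The paper takes a closely related but slightly different route: instead of the eigenvalue-based norm equivalence $\mu_{\min}(M)\tr(G)\le\tr(MG)\le\mu_{\max}(M)\tr(G)$, it reuses the change of variables from the proof of Proposition~\ref{prodscalG}, namely $g(\cdot,i)=\sum_k T_{i,k}f(\cdot,k)$ with $T=M^{-1/2}$, which turns $\snorm{g}_{\G}^2$ \emph{exactly} into $\sum_k\snorm{f(\cdot,k)}_{\F}^2$. Completeness then follows from completeness of $\F$ applied to the transformed sequence $(f_n)$. Your norm-equivalence argument is more elementary (no matrix square root needed) and has the added benefit of giving an explicit continuity constant $\sqrt{k(x,x)/\mu_{\min}(M)}$ for point evaluations, where the paper simply declares continuity ``clear''. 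Your final paragraph, exhibiting the kernel $(M^{-1})_{j,j'}k(x,x')$, is correct but goes beyond what the corollary asks; the paper establishes this separately as Proposition~\ref{prop_noy_multi}.
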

In order to write down the kernel matrix in compact form, we introduce the following notations.
\begin{definition}[Kronecker Product]
  Let $A \in \mathcal{M} _{m,n}(\R)$, $B \in \mathcal{M}_{p,q}(\R)$. We define the Kronecker product $A \otimes B$ as being the $(mp) \times (nq)$ matrix built with $p \times q$ blocks, the block of index $(i,j)$ being $A_{i,j}\cdot B$:
\begin{equation*}
  A \otimes B =
\begin{pmatrix}
  A_{1,1} B & \ldots & A_{1,n} B  \\
  \vdots & \ddots & \vdots \\
  A_{m,1} B & \ldots & A_{m,n} B  \\
\end{pmatrix} \pt
\end{equation*}
\end{definition}
The Kronecker product is a widely used tool to deal with matrices and tensor products. Some of its classical  properties are given in Section~\ref{proof_propmulti}; see also \citet{Horn_Johnson_Matrix_analysis}.

\begin{proposition} \label{prop_noy_multi}
The kernel matrix associated with the design $\widetilde{X} \egaldef (X_i,j)_{i,j} \in \X \times \sset{1,\dots,p}$ and the RKHS $ (\G,\scal{\cdot}{\cdot}_{\G})$ is  $\tilde{K}_M \egaldef M^{-1} \otimes K$.
\end{proposition}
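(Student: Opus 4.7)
The plan is to construct the reproducing kernel of $\G$ explicitly and then match it to the Kronecker product form. Since $\G$ is a ``$\F$-valued in each coordinate $j$'' space, and $k$ is already the reproducing kernel of $\F$, I expect the task index $j$ to interact only through $M$. So I will make the ansatz
\[
\tilde{k}((x,j),(x',j')) \egaldef W_{j,j'} \, k(x,x')
\]
for some unknown symmetric matrix $W \in \mathcal{S}_p(\R)$, and solve for $W$.

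First, I check that for every $(x,j)$, the function $\tilde{k}((x,j),\cdot)$ belongs to $\G$: for each fixed task index $l$, $\tilde{k}((x,j),\cdot)(\cdot,l) = W_{j,l} \, k(x,\cdot) \in \F$ because $k(x,\cdot) \in \F$. Hence $\tilde{k}((x,j),\cdot) \in \G$ and the scalar product $\scal{g}{\tilde{k}((x,j),\cdot)}_{\G}$ is well-defined for every $g \in \G$. Next I impose the reproducing property: using the reproducing property in $\F$,
\[
\scal{g}{\tilde{k}((x,j),\cdot)}_{\G}
= \sum_{l,l'} M_{l,l'} \, W_{j,l'} \, \scal{g(\cdot,l)}{k(x,\cdot)}_{\F}
= \sum_{l} \paren{ \sum_{l'} M_{l,l'} W_{j,l'} } g(x,l) .
\]
For this to equal $g(x,j)$ for every $g \in \G$ (and every $(x,j)$), one needs $\sum_{l'} M_{l,l'} W_{j,l'} = \delta_{l,j}$, i.e.\ $M W\trsp = I_p$. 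Because $M$ (hence $M^{-1}$) is symmetric, this forces $W = M^{-1}$, giving $\tilde{k}((x,j),(x',j')) = (M^{-1})_{j,j'} \, k(x,x')$.

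Finally, I identify the kernel matrix of the design $\widetilde{X} = (X_i,j)_{i,j}$. Following the $\vect$ convention of the paper, $\widetilde{X}$ is enumerated as $(X_1,1),\dots,(X_n,1),(X_1,2),\dots,(X_n,p)$, so the entry of $\widetilde{K}_M$ at position $((j-1)n+i,\,(j'-1)n+i')$ equals $(M^{-1})_{j,j'} k(X_i,X_{i'})$. This is precisely the $n\times n$ block at position $(j,j')$ of $M^{-1}\otimes K$ in the paper's convention, which yields $\widetilde{K}_M = M^{-1}\otimes K$.

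The only non-mechanical step is the ansatz: I need to trust that the kernel is a tensor-product of $k$ with a task-space matrix. This is natural because $\G$ is a direct sum $\F^p$ with an inner product twisted by $M$, and the reproducing property in each coordinate forces the tensor structure. The main bookkeeping obstacle is getting the ordering right so that the matrix $W$ solving $MW\trsp = I$ ends up being $M^{-1}$ rather than some transpose, but symmetry of $M$ removes any ambiguity.
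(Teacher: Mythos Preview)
Your proof is correct and follows essentially the same route as the paper: both arguments verify the reproducing property $\scal{g}{\tilde{k}((x,j),\cdot)}_{\G} = g(x,j)$ directly, and the computation is line-for-line the same. The only cosmetic difference is that the paper packages the candidate as a feature map $\tilde{\Phi}(x,j)$ (the $j$-th column of $M^{-1}$ tensored with $\Phi(x)$) and then recovers $\tilde{k}$ as $\scal{\tilde{\Phi}(x,i)}{\tilde{\Phi}(y,j)}_{\G}$, whereas you postulate $\tilde{k}$ with an undetermined matrix $W$ and solve for it. Your closing worry about ``trusting'' the ansatz is unnecessary: once a function satisfies the reproducing property, uniqueness of the reproducing kernel guarantees it is the kernel, so no separate justification of the tensor form is required.
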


Proposition \ref{prop_noy_multi} is proved in Appendix \ref{dem_prop_noy_multi}. We can then apply the representer's theorem \citep{Schölkopf_Smola_learning} to the minimization problem \eqref{min_multi_M} and deduce that $\fh_{M} = A_{M} y$ with
\begin{equation*}
A_{M}  = A_{M,K}
\egaldef \tilde{K}_{M}(\tilde{K}_{M}+np I_{np})^{-1} = (M^{-1} \otimes K)\left((M^{-1} \otimes K)+np I_{np}\right)^{-1} \pt
\end{equation*}

\subsection{Optimal Choice of the Kernel \label{kernel_choice}}

Now when working in multi-task regression, a set $\M \subset \mathcal{S}_p^{++}(\R)$ of matrices $M$ is given, and the goal is to select the ``best'' one, that is, minimizing over $M$ the quadratic risk $n^{-1}\|\fh_M-f\|_{2}^2$.
For instance, the single-task framework corresponds to $p=1$ and $\M = (0,+\infty)$.
The multi-task case is far richer. The oracle risk is defined as
\begin{equation}\label{oracle_risk}
 \inf_{M \in \M}\set{ \norr{\fh_{M}-f}^2 } \pt
\end{equation}
The ideal choice, called the oracle, is any matrix
\begin{equation*}
M^{\star} \in \argmin{M \in \M}\set{ \norr{\fh_{M}-f}^2 } \pt
\end{equation*}
Nothing here ensures the oracle exists. However in some special cases (see for instance Example \ref{ex_calcul_oracle}) the infimum of $ \|\fh_{M}-f\|^2$ over the set $\{\fh_M,~M \in \M\}$ may be attained by a function $f^* \in \F^p$---which we will call ``oracle'' by a slight abuse of notation---while the former problem does not have a solution.

From now on we always suppose that the infimum of $\{ \|\fh_{M}-f\|^2 \}$ over $\M$ is attained by some function $f^{\star} \in \F^p$. However the oracle $M^{\star}$ is not an estimator, since it depends on $f$.

\begin{ex}[Partial computation of the oracle in a simple setting]\label{ex_calcul_oracle}
It is possible in certain simple settings to exactly compute the oracle (or, at least, some part of it). Consider for instance the set-up where the $p$ functions are taken to be equal (that is, $f^1=\dots =f^p$). In this setting it is natural to use the set
\begin{equation*}
\Mmultiens \egaldef \set{ \Mmulti \paren{\l,\mu} = (\l+p\mu) I_p - \frac{\mu}{p} \boldsymbol{1}\boldsymbol{1}\trsp \, / \, (\l,\mu) \in (0,+\infty)^2 }
\pt
\end{equation*}
Using the estimator $\fh_{M} = A_{M} y$ we can then compute the quadratic risk using the bias-variance decomposition given in Equation~\eqref{eqfond1}\textup{:}
\begin{equation*}
  \esp{\norr{\fh_{M} -f}^2} = \|(A_{M}-I_{np})f\|_2^2 + \tr(A_{M}\trsp A_{M}\cdot(\S \otimes I_n))  \pt
\end{equation*}
Computations \textup{(}reported in Appendix \ref{app_calcul_oracle}\textup{)} show that, with the change of variables $\tilde{\mu}=\l+p\mu$, the bias does not depend on $\tilde{\mu}$ and the variance is a decreasing function of $\tilde{\mu}$.
Thus the oracle is obtained when $\tilde{\mu}=+\infty$, leading to a situation where the oracle functions $f^{1,\star},\dots,f^{p,\star}$ verify $f^{1,\star}=\dots=f^{p,\star}$.
It is also noticeable that, if one assumes the maximal eigenvalue of $\S$ stays bounded with respect to $p$, the variance is of order $\mathcal{O}(p^{-1})$ while the bias is bounded  with respect to $p$.
\end{ex}
As explained by \citet{Arl_Bac:2009:minikernel_long}, we choose
\begin{equation*}
  \Mh \in \argmin{M \in \M}\{\crit(M)\}  \quad \mbox{with} \quad \crit(M) = \frac{1}{np} \norm{y - \fh_{M} }_2^2 + \pen(M) \virg
\end{equation*}
where the penalty term $\pen(M)$ has to be chosen appropriately.

\begin{rem}
  Our model \eqref{modele} does not constrain the functions $f^1,\dots,f^p$. Our way to express the similarities between the tasks (that is, between the $f^j$) is via the set $\M$, which represents the a priori knowledge the statistician has about the problem. Our goal is to build an estimator whose risk is the closest possible to the  oracle risk. 
  Of course using an inappropriate set $\M$ (with respect to the target functions $f^1,\dots,f^p$) may lead to bad overall performances. Explicit multi-task settings are given in Examples \ref{example_ind}, \ref{example_Mlm} and \ref{example_clustering} and through simulations in Section \ref{simulations}.
\end{rem}

The unbiased risk estimation principle \citep[introduced by][]{Aka70} requires
\begin{equation*}
  \esp{\crit(M)} \approx \esp{\frac{1}{np} \norm{ \fh_{M} - f }_2^2} \virg
\end{equation*}
which leads to the (deterministic) \emph{ideal penalty}
\begin{equation*}
  \pen_{\id}(M) \egaldef \esp{ \frac{1}{np} \| \fh_{M} - f \|_2^2} - \esp{\frac{1}{np} \norm{ y-\fh_{M} }_2^2} \pt
\end{equation*}
Since $\fh_{M} = A_{M}y$ and $y = f + \e$, we can write
\begin{align*}
  \norm{ \fh_{M}-y }_2^2 & = \norm{ \fh_{M} -f }_2^2 + \norm{ \e }_2^2 -2\scal{\e}{A_{M}\e} + 2\scal{\e}{(I_{np}-A_{M})f} \pt
\end{align*}
Since $\e$ is centered and $M$ is deterministic,  we get, up to an additive factor independent of~$M$,
\begin{equation*}
  \pen_{\id}(M) = \frac{2\esp{\scal{\e}{A_{M}\e}}}{np}\virg
\end{equation*}
that is, as the covariance matrix of $\e$ is $\S \otimes I_{n}$,
\begin{equation} \label{defpenidmulti}
  \pen_{\id}(M) = \frac{2\tr \big( A_{M}\cdot(\S \otimes I_n) \big) }{np}\pt
\end{equation}
In order to approach this penalty as precisely as possible, we have to sharply estimate $\S$.
In the single-task case, such a problem reduces to estimating the variance $\s^2$ of the noise and was tackled by \citet{Arl_Bac:2009:minikernel_long}.
Since our approach for estimating $\S$ heavily relies on these results, they are summarized in the next section.

 Note that estimating $\Sigma$ is a mean towards estimating $M$.
 The technique we develop later for this purpose is not purely a multi-task technique, and may also be used in a different context.

\section{Single Task Framework: Estimating a Single Variance}
\label{simple_reg}
This section recalls some of the main results from \citet{Arl_Bac:2009:minikernel_long} which can be considered as solving a special case of Section~\ref{sec_anal_th}, with $p=1$, $\S=\s^2>0$ and $\M = [0,+\infty]$. Writing $M = \l$ with $\l \in [0,+\infty]$, the regularization matrix is
$$
\forall \l \in (0,+\infty) \, , \quad A_{\l} = A_{\l,K} = K(K+n\l I_n)^{-1}  \virg
$$
$A_0=I_n$ and $A_{+\infty}=0$; the ideal penalty becomes
\begin{equation*}
  \pen_{\id}(\l) = \frac{2\s^2\tr(A_{\l})}{n} \pt
\end{equation*}
By analogy with the case where $A_{\l}$ is an orthogonal projection matrix, $\df(\l) \egaldef \tr(A_{\l})$ is called the effective degree of freedom, first introduced by \citet{mallows1973}; see also the work by \citet{Zhang}.
The ideal penalty however depends on $\s^2$; in order to have a fully data-driven penalty we have to replace $\s^2$ by an estimator $\sigh^2$ inside $\pen_{\id}(\l)$.
For every $\l\in[0,+\infty]$, define
\begin{equation*} \label{defpenmin}
  \pen_{\min}(\l) =   \pen_{\min}(\l,K) \egaldef \frac{(2\tr(A_{\l,K})-\tr(A_{\l,K}\trsp A_{\l,K}))}{n} \pt
\end{equation*}
We shall see now that it is a \emph{minimal penalty} in the following sense. If for every $C>0$
\begin{equation*}
  \hat{\l}_0(C) \in \argmin{\l \in [0,+\infty]} \set{\frac{1}{n}\norm{ A_{\l,K} Y -Y }_2^2 + C\pen_{\min}(\l,K) } \virg
\end{equation*}
then---up to concentration inequalities---$\hat{\l}_0(C)$ acts as a mimimizer of
\begin{equation*}
  g_C(\l) = \esp{\frac{1}{n}\norm{ A_{\l} Y -Y }_2^2 + C\pen_{\min}(\l)} - \s^2= \frac{1}{n}\norm{ (A_{\l}-I_n)f }_2^2 + (C-\s^2)\pen_{\min}(\l) \pt
\end{equation*}
The former theoretical arguments show that
\begin{itemize}
  \item if $C<\s^2$, $g_C(\l)$ decreases with $\df(\l)$ so that $\df(\hat{\l}_0(C))$ is huge: the procedure overfits;
  \item if $C>\s^2$, $g_C(\l)$ increases with $\df(\l)$ when $\df(\l)$ is large enough  so that $\df(\hat{\l}_0(C))$ is much smaller than when $C<\s^2$.
\end{itemize}
The following algorithm was introduced by \citet{Arl_Bac:2009:minikernel_long} and uses this fact to estimate $\s^2$.
\begin{algo} \label{algo1}
\begin{enumerate}
  \item[] {\bf Input:} $Y \in \R^n$, $K \in \mathcal{S}_n^{++}(\R)$
  \item For every $C >0$, compute \begin{equation*} \widehat{\l}_0(C) \in \argmin{\l \in [0,+\infty]} \set{\frac{1}{n}\norm{ A_{\l,K} Y -Y }_2^2 + C\pen_{\min}(\l,K) } \pt \end{equation*}
  \item {\bf Output:} $\Ch$ such that $\df(\widehat{\l}_0(\Ch)) \in [n/10,n/3]$.
\end{enumerate}
\end{algo}
An efficient algorithm for the first step of Algorithm~\ref{algo1} is detailed by \citet{Arl_Mas:2009:pente}, and we discuss the way we implemented Algorithm~\ref{algo1} in Section \ref{simulations}.
The output $\Ch$ of Algorithm~\ref{algo1} is a provably consistent estimator of $\s^2$, as stated in the following theorem.
\begin{theorem}[Corollary of Theorem~1 of \citealp{Arl_Bac:2009:minikernel_long}] \label{thmono}
  Let $\b = 150$.
  Suppose \linebreak[4] $\e \sim \mathcal{N}(0,\s^2 I_n)$ with $\s^2 >0$, and
  that $\l_0 \in (0,+\infty )$ and $d_n \geq 1$ exist such that
  \begin{equation} \label{eq.hypThm1}
    \df(\l_0) \leq \sqrt{n} \mbox{ and } \frac{1}{n} \norm{ (A_{\l_0}-I_n)F }_2^2 \leq d_n \s^2 \sqrt{\frac{\ln n}{n}} \pt
  \end{equation}
  Then for every $\d \geq 2$, some constant $n_0(\d)$ and an event $\Omega$ exist such that $\Proba(\Omega) \geq 1-n^{-\d}$ and if $n \geq n_0(\d)$, on $\Omega$,
  \begin{equation} \label{ineqmonosigma}
    \left( 1-\b(2+\d) \sqrt{\frac{\ln n}{n}}\right)\s^2 \leq \Ch \leq \left( 1+\b(2+\d) d_n \sqrt{\frac{\ln(n)}{n}} \right)\s^2 \pt
  \end{equation}
\end{theorem}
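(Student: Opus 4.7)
The plan is to follow the minimal-penalty strategy outlined just before the theorem and turn the heuristic phase transition at $C = \s^2$ into a rigorous two-sided bound. Since the statement is presented as a corollary of Theorem~1 of \citet{Arl_Bac:2009:minikernel_long}, the real work is to verify that the quantity $\Ch$ produced by Algorithm~\ref{algo1} lies in the interval claimed in \eqref{ineqmonosigma}. Concretely, let
\begin{equation*}
 \crit_C(\l) \egaldef \frac{1}{n}\norr{A_{\l,K} Y - Y}^2 + C\pen_{\min}(\l,K) \virg
\end{equation*}
so that $\hat\l_0(C) \in \argmin{\l} \crit_C(\l)$. A direct computation using $Y = f + \e$ and $\E[\scal{\e}{A_\l \e}] = \s^2 \tr(A_\l)$ gives
\begin{equation*}
 \E[\crit_C(\l)] = \s^2 + \frac{1}{n}\norr{(A_\l - I_n) f}^2 + (C - \s^2)\pen_{\min}(\l) = \s^2 + g_C(\l)\pt
\end{equation*}
The function $\l \mapsto \pen_{\min}(\l)$ is monotone in $\df(\l)$ on the relevant regime, and $\norr{(A_\l - I_n)f}^2$ is controlled at $\l_0$ thanks to assumption \eqref{eq.hypThm1}, so $g_C$ indeed has the announced phase transition at $C = \s^2$.

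The first key step is to establish a uniform (in $\l$) concentration inequality of the form
\begin{equation*}
 \sabsj{\crit_C(\l) - \E[\crit_C(\l)]} \leq \eta(\d,n)\bigl(\s^2 + g_C(\l)\bigr)
\end{equation*}
on an event of probability at least $1 - n^{-\d}$, with $\eta(\d,n) = O(\sqrt{\ln n/n})$. This reduces to concentration bounds for the Gaussian chaos $\scal{\e}{A_{\l,K}\e} - \s^2 \tr(A_{\l,K})$ and for the linear term $\scal{\e}{(I_n - A_{\l,K})f}$, combined with a peeling/chaining argument over the family $\{A_{\l,K}\}_{\l \geq 0}$; this family is parameterized by a single real number and diagonalizes in a common basis with $K$, so the peeling is done on the values of $\df(\l)$ and only costs a logarithmic factor. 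This is the main technical obstacle, and it is precisely what \citet{Arl_Bac:2009:minikernel_long} carry out in detail.

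Once uniform concentration is available, the conclusion proceeds by contrapositive in two cases. Assume $C \leq (1 - \b(2+\d)\sqrt{\ln n/n})\s^2$: then for any $\l$ with $\df(\l) \leq n/3$ we can compare $\crit_C(\l)$ to $\crit_C(\l_{\max})$, where $\l_{\max}$ is the smallest $\l$ with $\df(\l_{\max})$ close to $n$. Using $\pen_{\min}(\l_{\max}) \approx 1$, monotonicity of $\pen_{\min}$ and the bias bound at $\l_0$ (assumption \eqref{eq.hypThm1}), the $(C-\s^2)\pen_{\min}$ term beats the bias at $\hat\l_0(C)$, forcing $\df(\hat\l_0(C)) > n/3$. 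Symmetrically, if $C \geq (1 + \b(2+\d) d_n \sqrt{\ln n/n})\s^2$, then at $\l_0$ the positive excess $(C-\s^2)\pen_{\min}(\l_0)$ dominates both the bias $n^{-1}\norr{(A_{\l_0}-I_n)f}^2 \leq d_n \s^2\sqrt{\ln n/n}$ and the concentration error, so any $\l$ with $\df(\l) \geq n/10$ has $\crit_C(\l) > \crit_C(\l_0)$, which implies $\df(\hat\l_0(C)) < n/10$.

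Combining both implications: the set of $C$ for which $\df(\hat\l_0(C)) \in [n/10, n/3]$ is contained in the interval $[(1-\b(2+\d)\sqrt{\ln n/n})\s^2, (1+\b(2+\d)d_n \sqrt{\ln n/n})\s^2]$. Since by construction $\Ch$ belongs to this set, \eqref{ineqmonosigma} follows, and the proof amounts to choosing $n_0(\d)$ large enough that the concentration inequalities above hold and that the intermediate inequalities (involving $\s^2$, $d_n$ and $\ln n/n$) are indeed valid. The role of the factor $\b = 150$ is simply to absorb the universal constants coming from the Hanson--Wright-type Gaussian concentration and the peeling step.
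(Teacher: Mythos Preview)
The paper does not give its own proof of this statement: Theorem~\ref{thmono} is quoted verbatim as a corollary of Theorem~1 of \citet{Arl_Bac:2009:minikernel_long}, with no argument beyond the informal phase-transition heuristic in Section~\ref{simple_reg}. So there is nothing in the paper to compare your attempt against.

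That said, your sketch is a faithful outline of the argument in the cited reference and is consistent with the heuristic the present paper records: compute $\E[\crit_C(\l)] = \s^2 + g_C(\l)$, obtain uniform Gaussian concentration for the quadratic and linear terms over the one-parameter family $\{A_{\l,K}\}$, and then argue by contrapositive that $\df(\hat\l_0(C))$ cannot lie in $[n/10,n/3]$ unless $C$ falls in the interval \eqref{ineqmonosigma}. You are also right that the constant $\b$ and the threshold $n_0(\d)$ come from the concentration step; the remark following Theorem~\ref{propmulti} in the appendix even records the explicit constraints on $n_0(\d)$ inherited from \citet{Arl_Bac:2009:minikernel_long}.
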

\begin{rem}
  The values $n/10$ and $n/3$ in Algorithm~\ref{algo1} have no particular meaning and can be replaced by $n/k$, $n/k'$, with $k > k' > 2$. Only $\beta$ depends on $k$ and $k'$. Also the bounds required in Assumption \eqref{eq.hypThm1} only impact the right hand side of Equation \eqref{ineqmonosigma} and are chosen to match the left hand side. See Proposition 10 of \citet{Arl_Bac:2009:minikernel_long} for more details.
\end{rem}

\section{Estimation of the Noise Covariance Matrix $\S$ \label{estimation_Sigma}}
Thanks to the results developped by \citet{Arl_Bac:2009:minikernel_long} (recapitulated in Section~\ref{simple_reg}), we know how to estimate a variance for any one-dimensional problem. In order to estimate $\S$, which has $p(p+1)/2$ parameters, we can use several one-dimensional problems.  Projecting $Y$ onto some direction $z \in \R^p$ yields
\begin{equation}\label{Pz}
  Y_z \egaldef Y\cdot z = F\cdot z + E\cdot z = F_z + \e_z \virg
\end{equation}
with $\e_z \sim \N(0,\s^2_z I_n)$ and $\s_z^2 \egaldef \var\scroch{ \e \cdot z } = z\trsp \S z$. Therefore, we will estimate $\s_z^2$ for $z \in \mathcal{Z}$ a well chosen set, and use these estimators to build back an estimation of $\S$.

We now explain how to estimate $\S$ using those one-dimensional projections.

\begin{definition}
  Let $a(z)$ be the output $\Ch$ of Algorithm~\ref{algo1} applied to problem \eqref{Pz}, that is, with inputs $Y_z \in \R^n$ and $K \in \mathcal{S}_n^{++}(\R)$.
\end{definition}

The idea is to apply Algorithm~\ref{algo1} to the elements $z$ of a carefully chosen set $\mathcal{Z}$. Noting $e_i$ the $i$-th vector of the canonical basis of $\R^p$, we introduce $\mathcal{Z} = \{e_i,~ i \in \{1,\dots,p\}\} \cup \{e_i+e_j,~ 1 \leq i < j \leq p \}$. We can see that $a(e_i)$ estimates $\S_{i,i}$, while $a(e_i + e_j)$ estimates $\S_{i,i} + \S_{j,j} + 2\S_{i,j}$. Henceforth, $\S_{i,j}$ can be estimated by $(a(e_i+e_j)-a(e_i)-a(e_j))/2$. This leads to the definition of the following map $J$, which builds a symmetric matrix using the latter construction.
\begin{definition}\label{defJ}
  Let $J: \R^{\frac{p(p+1)}{2}} \to \mathcal{S}_p(\R)$ be defined by
  \begin{align*}
    J(a_1,\dots,a_p,a_{1,2},\dots,a_{1,p},\dots,a_{p-1,p})_{i,i} &= a_i ~\textrm{if}~ 1 \leq i \leq p \virg \\
    J(a_1,\dots,a_p,a_{1,2},\dots,a_{1,p},\dots,a_{p-1,p})_{i,j} &= \frac{a_{i,j}-a_i-a_j}{2} \mbox{ if } 1 \leq i < j  \leq p \pt
  \end{align*}
\end{definition}
This map is bijective, and for all $B \in \mathcal{S}_p(\R)$
\begin{equation*}
  J^{-1}(B) = \left(B_{1,1},\dots,B_{p,p},B_{1,1}+B_{2,2}+2B_{1,2},\dots,B_{p-1,p-1}+B_{p,p}+2B_{p-1,p} \right) \pt
\end{equation*}
This leads us to defining the following estimator of $\S$:
\begin{equation} \label{def_Sigma}
  \widehat{\S} \egaldef J\left(a(e_1),\dots,a(e_p),a(e_1+e_2),\dots,a(e_1+e_p),\dots,a(e_{p-1}+e_p)\right) \pt
\end{equation}

\begin{rem} \label{rem_algo_simplifie}
  If a diagonalization basis $(e_1',\dots,e_p')$ \textup{(}whose basis matrix is $P$\textup{)} of $\S$ is known, or if $\S$ is diagonal, then a simplified version of the  algorithm defined by \Eq{def_Sigma} is
\begin{equation}\label{algo_simplifie}
  \hat{\S}_{\simple} = P\trsp \diag(a(e_1'),\dots,a(e_p')) P\pt
\end{equation}
This algorithm has a smaller computational cost and leads to better theoretical bounds \textup{(}see Remark \ref{rem_algo_simple} and Section~\ref{result_diagonalizable}\textup{)}.
\end{rem}

Let us recall that $\forall \l \in (0,+\infty)$, $A_{\l}=A_{\l,K}=K(K+n\l I_n)^{-1}$.
Following \citet{Arl_Bac:2009:minikernel_long} we make the following assumption from now on:
\begin{equation}\label{Hdf}
  \left.
    \begin{aligned}
      & \forall j \in \set{1, \ldots, p} , \, \exists \l_{0,j} \in (0,+\infty) \, ,  \\
      & \qquad \df(\l_{0,j}) \leq \sqrt{n} \quad \mbox{and} \quad \frac{1}{n} \norm{ (A_{\l_{0,j}}-I_n) F_{e_j} }_2^2 \leq \S_{j,j} \sqrt{\frac{\ln n}{n}}
    \end{aligned}
    \enspace \right\}
\end{equation}
We can now state the first main result of the paper.
\begin{theorem} \label{propmulti}
  Let  $\Sh$ be defined by \Eq{def_Sigma}, $\a = 2$ and assume \eqref{Hdf} holds.
  For every $\d \geq 2$, a constant $n_0(\d)$, an absolute constant $L_1>0$ and an event $\Omega$ exist such that $\Proba(\Omega) \geq 1-  p(p+1)/2\times n^{-\d}$ and if $n \geq n_0(\d)$, on $\Omega$,
  \begin{align} \label{Schap_conv}
    & \hspace{-0.95cm}  (1-\eta)\S \preceq \widehat{\S} \preceq (1+\eta) \S  \\
    \mbox{where} \qquad  \eta &\egaldef L_1(2+\d) p \sqrt{\frac{\ln(n)}{n}} c(\S)^2 \notag
    \pt
  \end{align}
\end{theorem}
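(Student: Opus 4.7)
The overall strategy is to view $\hat{\S}$ as the image under the affine map $J$ of the tuple $(a(z))_{z \in \mathcal{Z}}$, and $\S$ itself as $J$ applied to the true variances $(\sigma_z^2 = z\trsp \S z)_{z \in \mathcal{Z}}$. I would therefore: (i) apply Theorem~\ref{thmono} to each of the $|\mathcal{Z}| = p(p+1)/2$ projected single-task problems~\eqref{Pz} to control $|a(z) - \sigma_z^2|$; (ii) take a union bound to obtain a common event $\Omega$ of probability at least $1 - p(p+1)/2 \cdot n^{-\delta}$; (iii) translate these scalar bounds into entrywise bounds on $\Delta \egaldef \hat{\S} - \S$; (iv) pass from an entrywise bound to the spectral sandwich $-\eta \S \preceq \Delta \preceq \eta \S$.

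For step~(i), the assumption~\eqref{eq.hypThm1} of Theorem~\ref{thmono} must be verified for each $z \in \mathcal{Z}$. When $z = e_j$ this is exactly~\eqref{Hdf} with $d_n = 1$. When $z = e_i + e_j$, I would bound the bias via
\begin{equation*}
\|(A_\lambda - I_n)(F_{e_i} + F_{e_j})\|_2^2 \leq 2\|(A_\lambda - I_n)F_{e_i}\|_2^2 + 2\|(A_\lambda - I_n)F_{e_j}\|_2^2,
\end{equation*}
and use $\sigma_z^2 \geq \mu_{\min}\|z\|_2^2 = 2\mu_{\min}$ (since $\S \succeq \mu_{\min} I_p$) in the denominator; choosing $\lambda_{0,z}$ to satisfy simultaneously $\df(\lambda_{0,z}) \leq \sqrt{n}$ and this bias bound then produces a valid constant $d_n^{(z)}$ of order $c(\S)$. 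Theorem~\ref{thmono} then gives, on $\Omega$ uniformly over $z \in \mathcal{Z}$,
\begin{equation*}
|a(z) - \sigma_z^2| \leq C(2+\delta)\, d_n^{(z)} \sqrt{\ln n/n}\, \sigma_z^2 \leq C'(2+\delta)\, c(\S)\, \mu_{\max} \sqrt{\ln n/n},
\end{equation*}
since also $\sigma_z^2 \leq \mu_{\max}\|z\|_2^2 \leq 2\mu_{\max}$ on $\mathcal{Z}$.

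Step~(iii) follows from the identity $\S_{i,j} = (\sigma_{e_i+e_j}^2 - \S_{i,i} - \S_{j,j})/2$ and Definition~\ref{defJ}: one has $\Delta_{i,i} = a(e_i) - \sigma_{e_i}^2$ and
\begin{equation*}
\Delta_{i,j} = \tfrac{1}{2}\croch{(a(e_i+e_j) - \sigma_{e_i+e_j}^2) - (a(e_i) - \sigma_{e_i}^2) - (a(e_j) - \sigma_{e_j}^2)},
\end{equation*}
so $\max_{i,j}|\Delta_{i,j}| \leq C''(2+\delta)\, c(\S)\, \mu_{\max} \sqrt{\ln n/n}$. For step~(iv), since $\Delta$ is symmetric one has $\|\Delta\|_{op} \leq p\max_{i,j}|\Delta_{i,j}|$, and for every $v \in \R^p$,
\begin{equation*}
|v\trsp \Delta v| \leq \|\Delta\|_{op}\|v\|_2^2 \leq \frac{\|\Delta\|_{op}}{\mu_{\min}}\, v\trsp \S v ,
\end{equation*}
which yields the theorem with $\eta = \|\Delta\|_{op}/\mu_{\min}$; the additional factor $c(\S)$ beyond the one already in $\|\Delta\|_{op}$ enters precisely through this Rayleigh-quotient step, producing the $c(\S)^2$ in the announced bound.

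I expect the main obstacle to be step~(i) for $z = e_i + e_j$: because $\df(\lambda)$ is decreasing in $\lambda$ while the bias $\|(A_\lambda - I_n)F\|_2^2$ is increasing, one cannot simply set $\lambda_{0,z} \in \{\lambda_{0,i}, \lambda_{0,j}\}$---the larger choice validates the $\df$ bound but may inflate the bias, and vice versa. Reconciling these two monotonicities with a suitable $\lambda_{0,z}$ and carefully tracking where $c(\S)$ enters $d_n^{(z)}$ is the delicate technical point; everything else is essentially bookkeeping.
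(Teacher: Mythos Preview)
Your proposal is correct and follows essentially the same route as the paper: apply Theorem~\ref{thmono} to each of the $p(p+1)/2$ projected problems, take a union bound, pass from the scalar errors to an entrywise bound on $\Delta=\widehat{\S}-\S$, and then to the operator norm (the paper packages steps~(iii)--(iv) through the Lipschitz constant of $J$, Lemma~\ref{lemmeC2}, which is exactly your $\|\Delta\|_{op}\le p\max_{i,j}|\Delta_{i,j}|$ up to constants).

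The one point worth correcting is your ``main obstacle''. It is not an obstacle at all. The degree of freedom $\df(\lambda)$ depends only on $\lambda$, not on the direction $z$, and each $\lambda_{0,j}$ already satisfies $\df(\lambda_{0,j})\le\sqrt{n}$. Hence taking $\lambda_0\egaldef\min_j\lambda_{0,j}$ (which \emph{equals} some $\lambda_{0,j^\star}$) automatically gives $\df(\lambda_0)\le\sqrt{n}$; and since $\lambda\mapsto\|(A_\lambda-I_n)F_{e_j}\|_2^2$ is monotone, this same $\lambda_0$ simultaneously satisfies the bias bound for every $e_j$, hence also for every $e_i+e_j$ after your inequality $\|(A_{\lambda_0}-I_n)F_{e_i+e_j}\|_2^2\le 2\|(A_{\lambda_0}-I_n)F_{e_i}\|_2^2+2\|(A_{\lambda_0}-I_n)F_{e_j}\|_2^2$. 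There is no tension between the two monotonicities once you notice that the smaller $\lambda$ is itself one of the admissible $\lambda_{0,j}$'s.

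A minor stylistic difference: to extract $d_n^{(z)}=O(c(\S))$ the paper compares $\S_{i,i}+\S_{j,j}$ directly to $\sigma_{e_i+e_j}^2$ via the elementary $2\times 2$ estimate $\S_{i,j}\ge -\frac{c(\S)-1}{c(\S)+1}\cdot\frac{\S_{i,i}+\S_{j,j}}{2}$ (Lemma~\ref{lemmec}), rather than going through the global extremal eigenvalues $\mu_{\min},\mu_{\max}$ as you do. Both routes yield $d_n=O(c(\S))$ and hence the same final dependence $\eta\asymp p\,c(\S)^2\sqrt{\ln(n)/n}$.
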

Theorem~\ref{propmulti} is proved in Section~\ref{proof_propmulti}.
It shows $\widehat{\S}$ estimates $\S$ with a ``multiplicative'' error controlled with large probability, in a non-asymptotic setting.
The multiplicative nature of the error is crucial for deriving the oracle inequality stated in Section~\ref{ineg_oracle}, since it allows to show the ideal penalty defined in Equation~\eqref{defpenidmulti} is precisely estimated when $\S$ is replaced by $\Sh$.

An important feature of Theorem~\ref{propmulti} is that it holds under very mild assumptions on the mean $f$ of the data (see Remark~\ref{rq.Hdf}).
Therefore, it shows $\Sh$ is able to estimate a covariance matrix {\em without prior knowledge on the regression function}, which,  to the best of our knowledge, has never been obtained in multi-task regression.

\begin{rem}[Scaling of $(n,p)$ for consistency]
  A sufficient condition for ensuring $\widehat{\S}$ is a consistent estimator of $\S$ is
  \begin{equation*}
    p c(\S)^2\sqrt{ \frac{\ln(n)}{n}} \longrightarrow 0 \virg
  \end{equation*}
  which enforces a scaling between $n$, $p$ and $c(\Sigma)$.
  Nevertheless, this condition is probably not necessary since the simulation experiments of Section~\ref{simulations} show that $\Sigma$ can be well estimated \textup{(}at least for estimator selection purposes\textup{)} in a setting where $\eta \gg 1$.
\end{rem}

\begin{rem}[On assumption \eqref{Hdf}] \label{rq.Hdf}
  Assumption \eqref{Hdf} is a single-task assumption \textup{(}made independently for each task\textup{)}.
  The upper bound $\sqrt{\ln(n)/n}$ can be multiplied by any factor $1 \leq d_n \ll \sqrt{n/\ln(n)}$ \textup{(}as in Theorem~\ref{thmono}\textup{)}, at the price of multiplying $\eta$ by $d_n$ in the upper bound of \Eq{Schap_conv}. More generally the bounds on the degree of freedom and the bias in \eqref{Hdf} only influence the upper bound of \Eq{Schap_conv}. The rates are chosen here to match the lower bound, see Proposition 10 of \citet{Arl_Bac:2009:minikernel_long} for more details.

  Assumption \eqref{Hdf} is rather classical in model selection, see \citet{Arl_Bac:2009:minikernel_long} for instance.
  In particular, \textup{(}a weakened version of\textup{)} \eqref{Hdf} holds if the bias $n^{-1} \|(A_{\l}-I_n)F_{e_i}\|_2^2$ is bounded by $C_1 \tr(A_{\l})^{-C_2}$, for some $C_1,C_2>0$.
\end{rem}
\begin{rem}[Choice of the set $\mathcal{Z}$]
  Other choices could have been made for $\mathcal{Z}$, however ours seems easier in terms of computation, since $|\mathcal{Z}|=p(p+1)/2$. Choosing a larger set $\mathcal{Z}$ leads to theoretical difficulties in the reconstruction of $\widehat{\S}$, while taking other basis vectors leads to more complex computations.
We can also note that increasing $|\mathcal{Z}|$ decreases the probability in Theorem~\ref{propmulti}, since it comes from an union bound over the one-dimensional estimations.
\end{rem}

\begin{rem}\label{rem_algo_simple}
  When $\Sh = \Sh_{\simple}$ as defined by \Eq{algo_simplifie}, that is, when a diagonalization basis of $\S$ is known, Theorem \ref{propmulti} still holds on a set of larger  probability $1- \kappa p n^{-\d}$ with a reduced error $\eta = L_1(\a+\d) \sqrt{{\ln(n)}/{n}}$. Then, a consistent estimation of $\S$ is possible whenever $p = O(n^{\d})$ for some $\d \geq 0$.
\end{rem}

\section{Oracle Inequality \label{ineg_oracle}}
This section aims at proving ``oracle inequalities'', as usually done in a model selection setting: given a set of models or of estimators, the goal is to upper bound the risk of the selected estimator by the oracle risk (defined by \Eq{oracle_risk}), up to an additive term and a multiplicative factor. We show two oracle inequalities (Theorems~\ref{thm_oracle_disc} and \ref{thm_oracle_HM}) that correspond to two possible definitions of $\Sh$.

Note that ``oracle inequality'' sometimes has a different meaning in the literature \citep[see for instance][]{lounici2011oracle} when the risk of the proposed estimator is controlled by the risk of an estimator using information coming from the true parameter (that is, available only if provided by an oracle).

\subsection{A General Result for Discrete Matrix Sets $\M$}
We first show that the estimator introduced in \Eq{def_Sigma} is precise enough to derive an oracle inequality when plugged in the penalty defined in \Eq{defpenidmulti} in the case where $\M$ is finite.

\begin{definition}
  Let $\widehat{\S}$ be the estimator of $\S$ defined by \Eq{def_Sigma}. We define
$$
    \Mh \in \argmin{M \in \M} \set{ \norr{ \fh_{M}-y}^2 +2\tr \paren{ A_{M}\cdot(\widehat{\S}\otimes I_n) } } \pt
$$
\end{definition}
We assume now the following   holds true:
\begin{equation}\label{Hdisc}
  \exists (C,\a_{\M}) \in (0,+\infty)^2, \quad\card(\M) < Cn^{\a_{\M}}\pt
\end{equation}

\begin{theorem} \label{thm_oracle_disc}
Let $\a = \max(\a_{\M},2)$, $\d \geq 2$ and assume~\eqref{Hdf} and~\eqref{Hdisc} hold true. Absolute constants $L_2, \kappa'>0$, a constant $n_1(\d)$ and an event $\tilde{\Omega}$ exist such that $\Proba(\tilde{\Omega}) \geq 1-\kappa 'p(p+C) n^{-\d}$
and the following holds as soon as $n \geq n_1(\d)$.
First, on $\tilde{\Omega}$,
\begin{equation} \label{resultatoracle_ht_proba}
  \begin{split}
  \frac{1}{np}\norr{\fh_{\Mh}-f}^2 \leq \left( 1+\frac{1}{\ln(n)} \right)^2 \inf_{M \in \M}\set{\frac{1}{np} \norr{\fh_{M}-f}^2 } + L_2  c(\S)^4\tr(\S) (\a + \d)^2\frac{p^3\ln(n)^3}{np}\pt
  \end{split}
\end{equation}
Second, an absolute constant $L_3$ exists such that
\begin{equation} \label{resultatoracle_esp}
  \begin{split}
  \esp{\frac{1}{np}\norr{\fh_{\Mh}-f}^2} \leq \left( 1+\frac{1}{\ln(n)} \right)^2 \esp{\inf_{M \in \M}\left\{\frac{1}{np} \norr{ \fh_{M}-f}^2  \right\}} ~~~~~~~~~~~~~~~~~~~~~~~\\~~~~~~~~~~~~+L_2 c(\S)^4\tr(\S) (\a + \d)^2\frac{p^3\ln(n)^3}{np}+ L_3\frac{\sqrt{p(p+C)}}{n^{\d/2}} \left(\nor{\S} + \frac{\norr{f}^2  }{n p} \right)
   \pt
  \end{split}
\end{equation}
\end{theorem}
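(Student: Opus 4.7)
The plan is to follow the classical ``unbiased risk estimation'' scheme: rewrite $\crit(M)$ as $\|f-\fh_M\|_2^2/(np)$ plus an $M$-independent constant and three small fluctuations, bound those fluctuations uniformly over $\M$, and turn the resulting approximate minimization statement into an oracle inequality. Using $y=f+\e$ and $\fh_M = A_M y$, one has
\begin{equation*}
  \frac{\|y-\fh_M\|_2^2}{np} = \frac{\|\e\|_2^2}{np} + \frac{\|f-\fh_M\|_2^2}{np} + \frac{2\langle\e,(I_{np}-A_M)f\rangle}{np} - \frac{2\langle\e, A_M\e\rangle}{np}\pt
\end{equation*}
Setting $\delta_1(M)=\pen(M)-\pen_{\id}(M)$, $\delta_2(M)=\pen_{\id}(M)-2\langle\e,A_M\e\rangle/(np)$ and $\delta_3(M)=2\langle\e,(I_{np}-A_M)f\rangle/(np)$, the definition of $\Mh$ as a minimizer of $\crit$ then gives, for every $M\in\M$,
\begin{equation*}
  \frac{\|f-\fh_{\Mh}\|_2^2}{np} \leq \frac{\|f-\fh_M\|_2^2}{np} + \sum_{i=1}^3 \bigl(\delta_i(M)-\delta_i(\Mh)\bigr)\pt
\end{equation*}

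Next I bound the three $\delta_i$. On the event $\Omega$ of Theorem~\ref{propmulti} one has $-\eta\S \preceq \Sh-\S \preceq \eta\S$, and since $A_M$ is symmetric positive semi-definite (being a function of the p.s.d.\ matrix $M^{-1}\otimes K$), this yields $|\delta_1(M)|\leq\eta\,\pen_{\id}(M)$ uniformly. The term $\delta_2(M)$ is a centered Gaussian quadratic form: writing $W=(\S\otimes I_n)^{-1/2}\e\sim\N(0,I_{np})$ and applying a Hanson--Wright bound yields, for each $t>0$ with probability $\geq 1-2e^{-t}$,
\begin{equation*}
  |\delta_2(M)| \lesssim \sqrt{\mu_{\max}(\S)\,\pen_{\id}(M)\,t/(np)} + \mu_{\max}(\S)\,t/(np)\pt
\end{equation*}
The term $\delta_3(M)$ is a centered Gaussian of variance at most $4\mu_{\max}(\S)\|(I_{np}-A_M)f\|_2^2/(np)^2$, so with probability $\geq 1-2e^{-t}$, $|\delta_3(M)| \leq 2\sqrt{2\mu_{\max}(\S)\|(I_{np}-A_M)f\|_2^2\, t}/(np)$. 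Taking $t=(\a+\d)\ln n$ and union-bounding over the at most $Cn^{\a_\M} \leq Cn^{\a}$ matrices in $\M$ builds an event $\tilde\Omega\subset\Omega$ with $\Proba(\tilde\Omega)\geq 1-\kappa'p(p+C)n^{-\d}$ on which all three estimates hold simultaneously for every $M\in\M$.

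To conclude, on $\tilde\Omega$ I apply $2ab\leq\theta a^2+b^2/\theta$ with $\theta=1/\ln n$ to the square-root terms coming from $\delta_2$ and $\delta_3$, at both $M$ and $\Mh$, absorbing small fractions of $\pen_{\id}(M)$, $\pen_{\id}(\Mh)$, $\|(I_{np}-A_M)f\|_2^2$ and $\|(I_{np}-A_{\Mh})f\|_2^2$ into $\|f-\fh_M\|_2^2$ and $\|f-\fh_{\Mh}\|_2^2$. This absorption relies on the bias/variance identity $\E\|f-\fh_M\|_2^2=\|(I_{np}-A_M)f\|_2^2+\tr(A_M^{\top}A_M(\S\otimes I_n))$ and on $\tr(A_M^\top A_M(\S\otimes I_n))\leq\tr(A_M(\S\otimes I_n))$, which holds because $A_M^2\preceq A_M$ (spectrum in $[0,1]$). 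The $\eta\,\pen_{\id}$ terms and the residuals from the $b^2/\theta$ side feed the additive remainder; using the crude bound $\pen_{\id}(M)\leq 2\mu_{\max}(\S)\leq 2\tr(\S)$ together with $\eta\lesssim(2+\d)c(\S)^2 p\sqrt{\ln(n)/n}$ yields the announced $L_2 c(\S)^4\tr(\S)(\a+\d)^2 p^3\ln(n)^3/(np)$ in~\eqref{resultatoracle_ht_proba}. For~\eqref{resultatoracle_esp}, split $\E[X]=\E[X\mathbf{1}_{\tilde\Omega}]+\E[X\mathbf{1}_{\tilde\Omega^c}]$ with $X=\|f-\fh_{\Mh}\|_2^2/(np)$; on $\tilde\Omega^c$, since $\|A_{\Mh}\|_{\mathrm{op}}\leq 1$ one has $\|f-\fh_{\Mh}\|_2^2\lesssim\|f\|_2^2+\|\e\|_2^2$, and Cauchy--Schwarz against $\Proba(\tilde\Omega^c)\leq\kappa'p(p+C)n^{-\d}$ produces the $\sqrt{p(p+C)}n^{-\d/2}(\nor\S+\|f\|_2^2/(np))$ tail term.

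The main technical obstacle is matching the tight multiplicative factor $(1+1/\ln n)^2$: this demands careful bookkeeping of which fraction of $\pen_{\id}$ and of the bias is absorbed on the $M$ side versus the $\Mh$ side, plus a final manipulation to move the unwanted $\eta\,\pen_{\id}(\Mh)$ and $\theta\,\pen_{\id}(\Mh)$ contributions from the right-hand side to the left-hand side (using $\eta+\theta<1/2$ on $\tilde\Omega$ for $n$ large enough, so that the coefficient in front of $\|f-\fh_{\Mh}\|_2^2$ stays positive). A secondary difficulty is that the per-$M$ Hanson--Wright and subgaussian bounds must be combined with the global bound from Theorem~\ref{propmulti} without deteriorating the union-bound exponent, which is why the event probability takes the factorized form $\kappa'p(p+C)n^{-\d}$ rather than just $n^{-\d}$.
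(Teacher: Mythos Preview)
Your overall architecture matches the paper's: rewrite the empirical criterion, control the fluctuation terms via Gaussian concentration plus the covariance-estimation bound from Theorem~\ref{propmulti}, then absorb small multiples of bias and variance into the risk; the expectation part is also handled the same way in the paper (Cauchy--Schwarz on $\tilde\Omega^c$). However, there is a genuine gap in your absorption step that prevents you from reaching the stated remainder.

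The issue is the distinction between $v_1(M)\egaldef\tr(A_M(\S\otimes I_n))$ and $v_2(M)\egaldef\tr(A_M^\top A_M(\S\otimes I_n))$. Your Hanson--Wright bound is written with $\pen_{\id}(M)\propto v_1(M)$ under the square root; after $2ab\le\theta a^2+b^2/\theta$ with $\theta=1/\ln n$ this produces a term $\theta\,\pen_{\id}(M)\propto v_1(M)/\ln n$. You then claim this can be absorbed into the risk using $v_2\le v_1$, but that inequality points the \emph{wrong way}: the bias--variance decomposition of the risk yields $b(M)+v_2(M)$, and absorbing a fraction of $v_1$ would require $v_1\lesssim b+v_2$, which is false in general (take $f=0$ and $A_M$ close to a rank-one projector, so $v_1\asymp\mu_{\max}(\S)$ while $v_2$ can be arbitrarily small). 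The same problem hits your treatment of the $\eta\,\pen_{\id}$ term: the crude bound $\pen_{\id}(M)\le 2\mu_{\max}(\S)$ gives a residual of order $\eta\,\tr(\S)\asymp (2+\d)\,p\,c(\S)^2\tr(\S)\sqrt{\ln(n)/n}$, which is $O(p\,n^{-1/2})$ and does \emph{not} fit inside the announced $O(p^2\ln(n)^3/n)$ remainder for fixed $p$ and large $n$.

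The paper closes this gap with a device you are missing: the Cauchy--Schwarz inequality $v_1(M)^2\le n\tr(\S)\,v_2(M)$ (Lemma~\ref{lemma_CS}), which for any $C_n>0$ yields $\eta\, v_1(M)\le C_n\, v_2(M)+\eta^2 n\tr(\S)/(4C_n)$. With $C_n=1/\ln n$ this converts the $\eta v_1$ contribution into an absorbable $v_2$ piece plus a residual of the correct $\gamma^2\tr(\S)\ln(n)^2/n$ order, and this is precisely what generates the $c(\S)^4$ and the extra powers of $p$ and $\ln n$ in the final bound. Equivalently, had you kept the sharp Hanson--Wright estimate $\sqrt{\mu_{\max}(\S)\,v_2(M)\,t}$ (which is what the Frobenius norm of $(\S^{1/2}\otimes I_n)A_M(\S^{1/2}\otimes I_n)$ actually gives) instead of loosening it to $\sqrt{\mu_{\max}(\S)\,v_1(M)\,t}$, the square-root term would already produce $\theta\, v_2(M)$ after Young's inequality and absorb directly; you would then still need the Cauchy--Schwarz lemma for the $\eta\,\pen_{\id}$ term coming from $\delta_1$. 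Either way, the missing ingredient is a control of $v_1$ in terms of $v_2$, not the reverse.
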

Theorem~\ref{thm_oracle_disc} is proved in Section~\ref{proof_thmoracle}.

\begin{rem}
If $\Sh = \Sh_{\simple}$ is defined by \Eq{algo_simplifie} the result still holds on a set of larger probability $ 1-\kappa 'p(1+C) n^{-\d}$ with a reduced error, similar to the one in Theorem \ref{thm_oracle_HM}.
\end{rem}

\subsection{A Result for a Continuous Set of Jointly Diagonalizable Matrices \label{result_diagonalizable}}

We now show a similar result when matrices in $\M$ can be jointly diagonalized. It turns out  a faster algorithm can be used instead of \Eq{def_Sigma} with a reduced error and a larger probability event in the oracle inequality.  Note that we no longer assume $\M$ is finite, so it can be parametrized by continuous parameters.

Suppose now the following holds, which means the matrices of $\M$ are jointly diagonalizable:

\begin{equation}\label{HM}
  \exists P \in O_{p}(\R) \, , \quad \M \subseteq \set{ P\trsp \diag(d_1,\dots,d_p) P \, , \, (d_i)_{i=1}^p \in (0,+\infty)^p } \pt
\end{equation}

Let $P$ be the matrix defined in Assumption \eqref{HM}, $\tilde{\S} = P\Sigma P\trsp$ and recall that $A_{\l}=K(K+n\l I_n)^{-1}\,$. Computations detailed in Appendix \ref{app_calcul_oracle}  show that the ideal penalty introduced in \Eq{defpenidmulti} can be written as
\begin{equation} \label{pen_id_HM}
  \begin{split}
    \forall M=P\trsp\diag(d_1,\dots,d_p)P \in \M, ~~~~~~~~~~~~~~~~~~~~~~~~~~~~~~~~~~~~~~~~~~~~~~~~~~~~~~\\~~~~~~~~~~~~ \pen_{\id}(M) = \frac{2\tr \big( A_{M}\cdot(\S \otimes I_n) \big) }{np} = \frac{2}{np} \left(\sum_{j=1}^p \tr(A_{pd_j})\tilde{\S}_{j,j}  \right) \pt
  \end{split}
\end{equation}

\Eq{pen_id_HM} shows that under Assumption \eqref{HM}, we do not need to estimate the entire matrix $\S$ in order to have a good penalization procedure, but only to estimate the variance of the noise in $p$ directions.

\begin{definition} \label{def_Sh_HM}
  Let $(e_1,\dots,e_p)$ be the canonical basis of $\R^p$, $(u_1,\dots,u_p)$ be the orthogonal basis defined by $\forall j \in \{1,\dots,p\},~ u_j=P\trsp e_j$.
  We then define
\begin{equation*}
\hat{\S}_{\textrm{HM}} = P \diag(a(u_1),\dots,a(u_p))P\trsp \virg
\end{equation*}
where for every $j \in \{1,\dots,p\}$, $a(u_j)$ denotes the  output  of Algorithm \ref{algo1} applied to Problem ($\mathbf{Pu_{j}}$), and
  \begin{equation} \label{deflc_HM}
    \Mh_{\textrm{HM}} \in \argmin{M \in \M} \set{ \norr{ \fh_{M}-y}^2 +2\tr \paren{ A_{M}\cdot(\hat{\S}_{\textrm{HM}}\otimes I_n) } } \pt
  \end{equation}
\end{definition}

\begin{theorem} \label{thm_oracle_HM}
Let $\a = 2$, $\d \geq 2$ and assume~\eqref{Hdf} and~\eqref{HM} hold true. Absolute constants $L_2>0$, and $\kappa''$, a constant $n_1(\d)$ and an event $\tilde{\Omega}$ exist such that $\Proba(\tilde{\Omega}) \geq 1-\kappa''p n^{-\d}$
and the following holds as soon as $n \geq n_1(\d)$.
 First, on $\tilde{\Omega}$,
\begin{equation} \label{resultatoracle_ht_proba_HM}
  \begin{split}
  \frac{1}{np}\norr{\fh_{\Mh_{\textrm{HM}}}-f}^2 \leq \left( 1+\frac{1}{\ln(n)} \right)^2 \inf_{M \in \M}\set{\frac{1}{np} \norr{\fh_{M}-f}^2 } + L_2 \tr(\S) (2 + \d)^2\frac{\ln(n)^3}{n}\pt
  \end{split}
\end{equation}
Second, an absolute constant $L_4$ exists such that
\begin{equation} \label{resultatoracle_esp_HM}
  \begin{split}
  \esp{\frac{1}{np}\norr{\fh_{\Mh_{\textrm{HM}}}-f}^2} \leq \left( 1+\frac{1}{\ln(n)} \right)^2 \esp{\inf_{M \in \M}\left\{\frac{1}{np} \norr{ \fh_{M}-f}^2  \right\}} \\+L_4\tr(\S) (2 + \d)^2\frac{\ln(n)^3}{n}
 + \frac{p}{n^{\d/2}} \frac{\norr{f}^2  }{n p}
   \pt
  \end{split}
\end{equation}
\end{theorem}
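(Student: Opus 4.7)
The plan is to exploit assumption \eqref{HM} to decouple the multi-task problem into $p$ single-task problems along the directions $u_j = P\trsp e_j$, thereby avoiding any union bound over the (possibly continuous) set $\M$ and producing a sharper result than Theorem \ref{thm_oracle_disc}. For $M = P\trsp \diag(d_1,\dots,d_p) P \in \M$, a direct Kronecker calculation yields
\[
A_M = (P\trsp \otimes I_n)\, \diag(A_{pd_1},\dots,A_{pd_p})\, (P \otimes I_n),
\]
and, setting $\tilde y_j = Y u_j$, $\tilde f_j = F u_j$, $\tilde \e_j = E u_j$ and using the orthogonality of $P \otimes I_n$,
\[
\norr{\fh_M - f}^2 = \sum_{j=1}^p \norr{A_{pd_j}\tilde y_j - \tilde f_j}^2,\quad \norr{\fh_M - y}^2 = \sum_{j=1}^p \norr{A_{pd_j}\tilde y_j - \tilde y_j}^2,
\]
\[
\tr\!\big( A_M(\hat{\S}_{\textrm{HM}} \otimes I_n)\big) = \sum_{j=1}^p \tr(A_{pd_j})\, a(u_j).
\]
Hence the criterion in \eqref{deflc_HM} is a sum of $p$ one-dimensional criteria, and $\tilde \e_j \sim \N(0,\tilde\S_{j,j}I_n)$ with $\tilde\S = P\S P\trsp$.

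By Definition \ref{def_Sh_HM} each $a(u_j)$ is the output of Algorithm \ref{algo1} applied to the one-dimensional problem in direction $u_j$. Applying Theorem \ref{thmono} in each direction and taking a union bound over $j \in \{1,\dots,p\}$ (this is the sharper variant given in Remark \ref{rem_algo_simple}) yields, on an event $\Omega_1$ of probability at least $1 - \kappa p n^{-\d}$,
\[
|a(u_j) - \tilde\S_{j,j}| \leq \eta\, \tilde\S_{j,j} \quad \text{for every } j, \qquad \eta = L(2+\d)\sqrt{\ln(n)/n}.
\]
Since each $A_{pd_j}$ is positive semidefinite with $\tr(A_{pd_j}) \geq 0$, this translates into the uniform multiplicative control $|\pen(M) - \pen_{\id}(M)| \leq \eta\, \pen_{\id}(M)$ over all $M \in \M$, where $\pen(M) = (2/np)\tr(A_M(\hat{\S}_{\textrm{HM}} \otimes I_n))$.

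The stochastic terms are handled next. The bias--variance identity
\[
\norr{\fh_M - y}^2 = \norr{\fh_M - f}^2 + \norr{\e}^2 - 2\langle \e, A_M \e\rangle + 2\langle \e, (A_M - I_{np})f\rangle
\]
also decouples along the $u_j$-directions. For each fixed $j$, the concentration inequalities for Gaussian quadratic and linear forms used in the single-task analysis of \citet{Arl_Bac:2009:minikernel_long} control $\langle \tilde \e_j, A_{pd_j}\tilde \e_j\rangle - \tilde\S_{j,j}\tr(A_{pd_j})$ and $\langle \tilde \e_j, (A_{pd_j} - I_n)\tilde f_j\rangle$ uniformly over $d_j \in (0,+\infty)$, up to a multiplicative factor $(1 \pm 1/\ln n)$ and an additive remainder of order $\tilde\S_{j,j}(2+\d)^2\ln(n)^3$. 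Critically, no union bound over $\M$ is required: only a union bound over the $p$ directions, yielding an event $\tilde\Omega$ of probability $\geq 1 - \kappa'' pn^{-\d}$ on which all these controls hold simultaneously with $\Omega_1$.

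On $\tilde\Omega$, combining the two preceding steps shows that $\crit(M) - (np)^{-1}\norr{y - f}^2$ is, up to the $(1 \pm 1/\ln n)$ factor and the above remainders, a faithful estimate of $(np)^{-1}\norr{\fh_M - f}^2$. Using $\crit(\Mh_{\textrm{HM}}) \leq \crit(M)$ for every $M \in \M$ and summing the $p$ single-task remainders (which telescopes to $\tr(\S) = \sum_j \tilde\S_{j,j}$) yields \eqref{resultatoracle_ht_proba_HM}. The expectation bound \eqref{resultatoracle_esp_HM} follows by integrating the high-probability bound and bounding the contribution of the complementary event via the crude estimate $\norr{\fh_M - f}^2 \leq 2(\norr{f}^2 + \norr{\e}^2)$ (since $\nor{A_M}\leq 1$), together with $\Proba(\tilde\Omega^c) \leq \kappa''pn^{-\d}$ and $\esp{\norr{\e}^2} = n\tr(\S)$. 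The main obstacle will be to keep careful track of the interplay between the multiplicative $\eta$-error on $\hat\S_{\textrm{HM}}$ and the $(1 + 1/\ln n)$-factor from single-task concentration, so that the final leading constant combines into $(1 + 1/\ln n)^2$ while the additive remainder remains of order $\tr(\S)(2+\d)^2\ln(n)^3/n$ rather than a coarser term.
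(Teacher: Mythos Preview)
Your proposal is correct and follows essentially the same route as the paper: decoupling along the eigenbasis $P$ to reduce both the penalty control (Theorem~\ref{thmono} in each direction, union bound over $j$) and the uniform concentration of the empirical-process terms (Lemma~\ref{lemma_concentration} under \eqref{HM}) to $p$ single-task problems, then combining via the minimality of $\crit(\Mh_{\mathrm{HM}})$. The only cosmetic difference is that the paper routes the combination step through the abstract Proposition~\ref{prop_thm_oracle} (with $\gamma=1$) and Lemma~\ref{lemma_CS}, whereas you sketch the argument more directly; your closing caveat about tracking the interplay between the $\eta$-error on the penalty and the $(1+1/\ln n)$ concentration factor is exactly what that proposition formalizes.
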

Theorem~\ref{thm_oracle_HM} is proved in Section~\ref{proof_thmoracle}.

\subsection{Comments on Theorems \ref{thm_oracle_disc} and \ref{thm_oracle_HM}}
\begin{rem} Taking $p=1$ \textup{(}hence $c(\S)=1$ and $\tr(\Sigma)=\sigma^2\,$\textup{)}, we recover Theorem~3 of \citet{Arl_Bac:2009:minikernel_long} as a corollary of Theorem~\ref{thm_oracle_disc}. \end{rem}

\begin{rem}[Scaling of $(n,p)$]
When assumption \eqref{Hdisc} holds, \Eq{resultatoracle_ht_proba} implies the asymptotic optimality of the estimator $\fh_{\Mh}$ when
\begin{equation*}
c(\Sigma)^4 \frac{ \tr{\S}}{p}  \times \frac{ p^3 \paren{\ln(n)}^3}{n} \ll \inf_{M \in \M}\set{\frac{1}{np} \norr{\fh_{M}-f}^2 } \pt
\end{equation*}
In particular, only $(n,p)$ such that $p^3 \ll n / \sparen{\ln(n)}^3$ are admissible.
When assumption \eqref{HM} holds, the scalings required to ensure optimality in \Eq{resultatoracle_ht_proba_HM} are more favorable:
\begin{equation*}
\tr{\S}  \times \frac{ \paren{\ln(n)}^3}{n} \ll \inf_{M \in \M}\set{\frac{1}{np} \norr{\fh_{M}-f}^2 } \pt
\end{equation*}
It is to be noted that $p$ still influences the left hand side via $\tr{\S}$.
\end{rem}

\begin{rem}
Theorems \ref{thm_oracle_disc} and \ref{thm_oracle_HM} are non asymptotic oracle inequalities, with a multiplicative term of the form $1+o(1)$. This allows us to claim that our selection procedure is nearly optimal, since our estimator is close \textup{(}with regard to the empirical quadratic norm\textup{)} to the oracle one.  Furthermore the term $1+(\ln(n))^{-1}$ in front of the infima in Equations~\eqref{resultatoracle_ht_proba}, \eqref{resultatoracle_ht_proba_HM}, \eqref{resultatoracle_esp} and \eqref{resultatoracle_esp_HM}  can be further diminished, but this yields a greater remainder term as a consequence.
\end{rem}

\begin{rem}[On assumption~\eqref{HM}]\label{rem_HM}
Assumption~\eqref{HM} actually means all matrices in $\M$ can be diagonalized in a unique orthogonal basis, and thus can be parametrized by their eigenvalues as in Examples \ref{example_ind}, \ref{example_Mlm} and \ref{example_clustering}.

In that case the optimization problem is quite easy to solve, as detailed in Remark \ref{rem_easy_optim_HM}. If not, solving \eqref{deflc_HM} may turn out to be a hard problem, and our theoretical results do not cover this setting. However, it is always possible to discretize the set $\M$ or, in practice, to use gradient descent.

Compared to the setting of Theorem \ref{thm_oracle_disc}, assumption \eqref{HM} allows a simpler estimator for the penalty \eqref{pen_id_HM}, with an increased probability  and a reduced error in the oracle inequality.

The main theoretical limitation comes from the fact that the probabilistic concentration tools used apply to discrete sets $\M$ (through union bounds). The structure of kernel ridge regression allows us to have a uniform control over a continuous set for the single-task estimators at the ``cost'' of $n$ pointwise controls, which can then be extended to the multi-task setting via \eqref{HM}. We conjecture Theorem~\ref{thm_oracle_HM} still holds without \eqref{HM} as long as $\M$ is not ``too large'', which could be proved similarly up to some uniform concentration inequalities.

Note also that if $\M_1,\ldots, \M_K$ all satisfy \eqref{HM} (with different matrices $P_k$), then Theorem~\ref{thm_oracle_HM} still holds for $\M = \bigcup_{k=1}^K \M_k$ with the penalty defined  by \Eq{deflc_HM} with $P = P_k$ when $M \in \M_k$, and $\Proba(\tilde{\Omega}) \geq 1- 9 K p^2 n^{-\d}$, by applying the union bound in the proof.
\end{rem}

\begin{rem}[Relationship with the trace norm]
Our approach relies on the minimization of Equation~\eqref{min_multi_M} with respect to $f$. \citet{DBLP:journals/ml/ArgyriouEP08} has shown that if we also minimize Equation~\eqref{min_multi_M} with respect to the matrix $M$ subject to the constraint $ \tr M^{-1}=1$, then we obtain an equivalent regularization by the nuclear norm \textup{(}a.k.a. trace norm\textup{)}, which implies the prior knowledge that our~$p$ prediction functions may be obtained as the linear combination of $ r \ll p$ basis functions. This situation corresponds to cases where the matrix $M^{-1}$ is singular.

Note that the link between our framework and trace norm \textup{(}i.e., nuclear norm\textup{)} regularization is the same than between multiple kernel learning and the single task framework of \citet{Arl_Bac:2009:minikernel_long}. In the multi-task case, the trace-norm regularization, though efficient computationally, does not lead to an oracle inequality, while our criterion is an unbiased estimate of the generalization error, which turns out to be non-convex in the matrix $M$. While DC programming techniques \citep[see, e.g.,][and references therein]{gasso2009} could be brought to bear to find local optima, the goal of the present work is to study the theoretical properties of our estimators, assuming we can minimize the cost function \textup{(}e.g., in special cases, where we consider spectral variants, or by brute force enumeration\textup{)}.
\end{rem}

\section{Simulation Experiments \label{simulations}}
In all the experiments presented in this section, we consider the framework of Section~\ref{sec_anal_th} with $\X = \R^d$, $d = 4$, and the kernel defined by $\forall x,y \in \X$, $k(x,y) =  \prod_{j=1}^d e^{-|x_j - y_j|}$.
The design points $X_1, \ldots, X_n \in \R^d$ are drawn (repeatedly and independently for each sample) independently from the multivariate standard Gaussian distribution.
For every $j \in \sset{1, \ldots, p}$, $f^j(\cdot) = \sum_{i=1}^m \a_i^j k(\cdot,z_i)$ where $m = 4$ and $z_1, \ldots, z_m \in \R^d$ are drawn (once for all experiments except in Experiment~D) independently from the multivariate standard Gaussian distribution, independently from the design $(X_i)_{1 \leq i \leq n}$. Thus, the expectations that will be considered are taken conditionally to the $z_i$.
The coefficients $\sparen{\a_i^j}_{1 \leq i \leq m \, , \, 1 \leq j \leq p}$ differ according to the setting.
Matlab code is available online.\footnote{Matlab code can be found at \url{http://www.di.ens.fr/~solnon/multitask_minpen_en.html}.}
\subsection{Experiments} Five experimental settings are considered:
\begin{enumerate}
\item[{\bf A$\rfloor$}] {\bf Various numbers of tasks:} $n = 10$ and $\forall i,j$, $\a_{i}^j = 1$, that is, $\forall j$, $f^j = f_A \egaldef \sum_{i=1}^m k(\cdot,z_i) $.
The number of tasks is varying: $p \in \sset{2k \, / \, k=1, \ldots, 25}$.
The covariance matrix is $\S = 10\cdot I_p$.
\item[{\bf B$\rfloor$}] {\bf Various sample sizes:} $p = 5$, $\forall j$, $f^j = f_A$ and $\S= \S_{B}$ has been drawn (once for all) from the Whishart $W(I_5,10,5)$ distribution; the condition number of $\S_{B}$ is $c(\S_{B}) \approx 22.05$.
The only varying parameter is $n \in \sset{50k \, / \, k=1, \ldots, 20}$.
\item[{\bf C$\rfloor$}] {\bf Various noise levels:} $n=100$, $p=5$ and $\forall j$, $f^j = f_A\,$. The varying parameter is $\S = \S_{C,t} \egaldef 5t \cdot I_5$ with $t \in \set{0.2 k \, / \, k=1, \ldots, 50}$. We also ran the experiments for $t = 0.01$ and $t = 100$.
\item[{\bf D$\rfloor$}] {\bf Clustering of two groups of functions:} $p = 10$, $n=100$, $\S=\S_{E}$  has been drawn (once for all) from the Whishart $W(I_{10},20,10)$ distribution; the condition number of $\S_{E}$ is $c(\S_{E}) \approx 24.95$. We pick the function $f_D \egaldef \sum_{i=1}^m \a_i k(\cdot,z_i)$ by drawing $(\a_1,\dots,\a_m)$ and $(z_1,\dots,z_m)$ from standard multivariate normal distribution (independently in each replication) and finally $f^1=\dots=f^5=f_D$, $f^6=\dots=f^{10}=-f_D$.
\item[{\bf E$\rfloor$}] {\bf Comparison to cross-validation parameter selection:} $p=5$, $\S = 10\cdot I_5$, $\forall j$, $f^j = f_A$. The sample size is taken in $\{10,50,100,250\}$.
\end{enumerate}
\subsection{Collections of Matrices} Two different sets of matrices $\M$ are considered in the Experiments A--C, following Examples~\ref{example_ind} and~\ref{example_Mlm}:
\begin{align*}
\Mmultiens &\egaldef \set{ \Mmulti \paren{\l,\mu} = (\l+p\mu) I_p - \frac{\mu}{p} \boldsymbol{1}\boldsymbol{1}\trsp \, / \, (\l,\mu) \in (0,+\infty)^2 } \\
\mbox{and} \quad \M_{\singletask} &\egaldef \set{M_{\ind}(\l) = \diag(\l_1,\dots,\l_p) \, / \, \l \in (0,+\infty)^p } \pt
\end{align*}
In Experiment D, we also use two different sets of matrices, following Example~\ref{example_clustering}:
\begin{align*}
\Mclustering &\egaldef \bigcup_{ I \subset \{1,\dots,p\}, I \notin\set{\{1,\dots,p\},\emptyset}} \set{ \MI \paren{\l,\mu,\mu}   \, / \, (\l,\mu) \in (0,+\infty)^2 }\cup\Mmultiens \\
\mbox{and} \quad \Msegmentation &\egaldef \bigcup_{ 1\leq k \leq p-1} \set{ \MI \paren{\l,\mu,\mu}   \, / \, (\l,\mu) \in (0,+\infty)^2, I = \{1,\dots,k\} }\cup\Mmultiens \pt
\end{align*}
\begin{rem}
The set $\Mclustering$ contains $2^p-1$ models, a case we will denote by ``clustering''. The other set, $\Msegmentation$, only has $p$ models, and is adapted to the structure of the Experiment D. We call this setting ``segmentation into intervals''.
\end{rem}

\subsection{Estimators} In Experiments A--C, we consider four estimators obtained by combining two collections $\M$ of matrices with two formulas for $\S$ which are plugged into the penalty \eqref{defpenidmulti} (that is, either $\S$ known or estimated by $\widehat{\S}$):
\begin{gather*}
\forall \alpha   \in \set{\multitask , \singletask } \, , \,  \forall S \in \set{\S,\Sh_{\HM}} \, , \quad
\fh_{\alpha,S} \egaldef \fh_{\Mh_{\alpha,S}} = A_{\Mh_{\alpha,S}} y
\\
\mbox{where} \quad
\Mh_{\alpha,S} \in \argmin{M \in \M_{\alpha}} \set{ \frac{1}{np} \norm{ y - \fh_{M} }_2^2 + \frac{2}{np}\tr\paren{ A_M\cdot (S\otimes I_n )}}
\end{gather*}
and $\Sh_{\HM}$ is defined in Section~\ref{result_diagonalizable}.
As detailed in Examples~\ref{example_ind}--\ref{example_Mlm}, $\fh_{\singletask,\Sh_{\HM}}$ and $\fh_{\singletask,\S}$ are concatenations of single-task estimators, whereas $\fh_{\multitask,\widehat{\S}_{\HM}}$ and $\fh_{\multitask,\S}$ should take advantage of a setting where the functions $f^j$ are close in $\F$ thanks to the regularization term $\sum_{j,k}\|f^j-f^k\|_{\F}^2$.
In Experiment D we consider the following three estimators, that depend on the choice of the collection $\M$:
\begin{gather*}
\forall \b   \in \set{\clustering , \segmentation, \singletask } \,  , \quad
\fh_{\b} \egaldef \fh_{\Mh_{\b}} = A_{\Mh_{\b}} y
\\
\mbox{where} \quad
\Mh_{\b} \in \argmin{M \in \M_{\b}} \set{ \frac{1}{np} \norm{ y - \fh_{M} }_2^2 + \frac{2}{np}\tr\paren{ A_M\cdot (\Sh\otimes I_n )}}
\end{gather*}
and $\Sh$ is defined by Equation~\eqref{def_Sigma}.

In Experiment E we consider the estimator $\fh_{\multitask,\Sh_{\HM}}$. As explained in the following remark the parameters of the former estimator are chosen by optimizing \eqref{deflc_HM}, in practice by choosing a grid. We also consider the estimator $\fh_{\multitask,\textrm{CV}}$ where the parameters are selected by performing 5-fold cross-validation on the mentionned grid.

\begin{rem}[Optimization of \eqref{deflc_HM}]\label{rem_easy_optim_HM}
  Thanks to Assumption \eqref{HM} the optimization problem \eqref{deflc_HM} can be solved easily. It suffices to diagonalize in a common basis the elements of $\M$ and the problem splits into several multi-task problems, each with one real parameter. The optimization was then done by using a grid on the real parameters, chosen such that the degree of freedom takes all integer values from $0$ to $n$.
\end{rem}

\begin{rem}[Finding the jump in Algorithm \ref{algo1}]
Algorithm~\ref{algo1} raises the question of how to detect the jump of  $\df(\l)$, which happens around $C=\s^2$. We chose to select an estimator $\Ch$ of $\s^2$ corresponding to the smallest index such that  $\df(\widehat{\l}_0(\Ch)) < n/2$. Another approach is to choose the index corresponding to the largest instantaneous jump of $\df(\widehat{\l}_0(C))$ \textup{(}which is piece-wise constant and non-increasing\textup{)}. This approach has a major drawback, because it sometimes selects a jump far away from the ``real'' jump around $\s^2$, when the real jump consists of several small jumps. Both approaches gave similar results in terms of prediction error, and we chose the first one because of its direct link to the theoretical criterion given in Theorem~\ref{thmono}.
\end{rem}

\subsection{Results} In each experiment, $N=1000$ independent samples $y \in \R^{np}$ have been generated. Expectations are estimated thanks to empirical means over the $N$ samples. Error bars correspond to the classical Gaussian $95\%$ confidence interval (that is, empirical standard-deviation over the $N$ samples multiplied by $1.96 / \sqrt{N}$).
The results of Experiments A--C are reported in Figures~\ref{fig_variation_p_2}--\ref{fig_variation_sigma_1}.
The results of Experiments C--E are reported in Tables~\ref{table_variation_sigma}--\ref{table_cv}.
The p-values correspond to the classical Gaussian difference test, where the hypotheses tested are of the shape $\mathbb{H}_0 = \set{q > 1}$ against the hypotheses $\mathbb{H}_1 = \set{q \leq 1}$, where the different quantities $q$ are detailed in Tables \ref{table_clustering}--\ref{table_cv}.

\begin{figure}
\centering
\includegraphics[width=\textwidth]{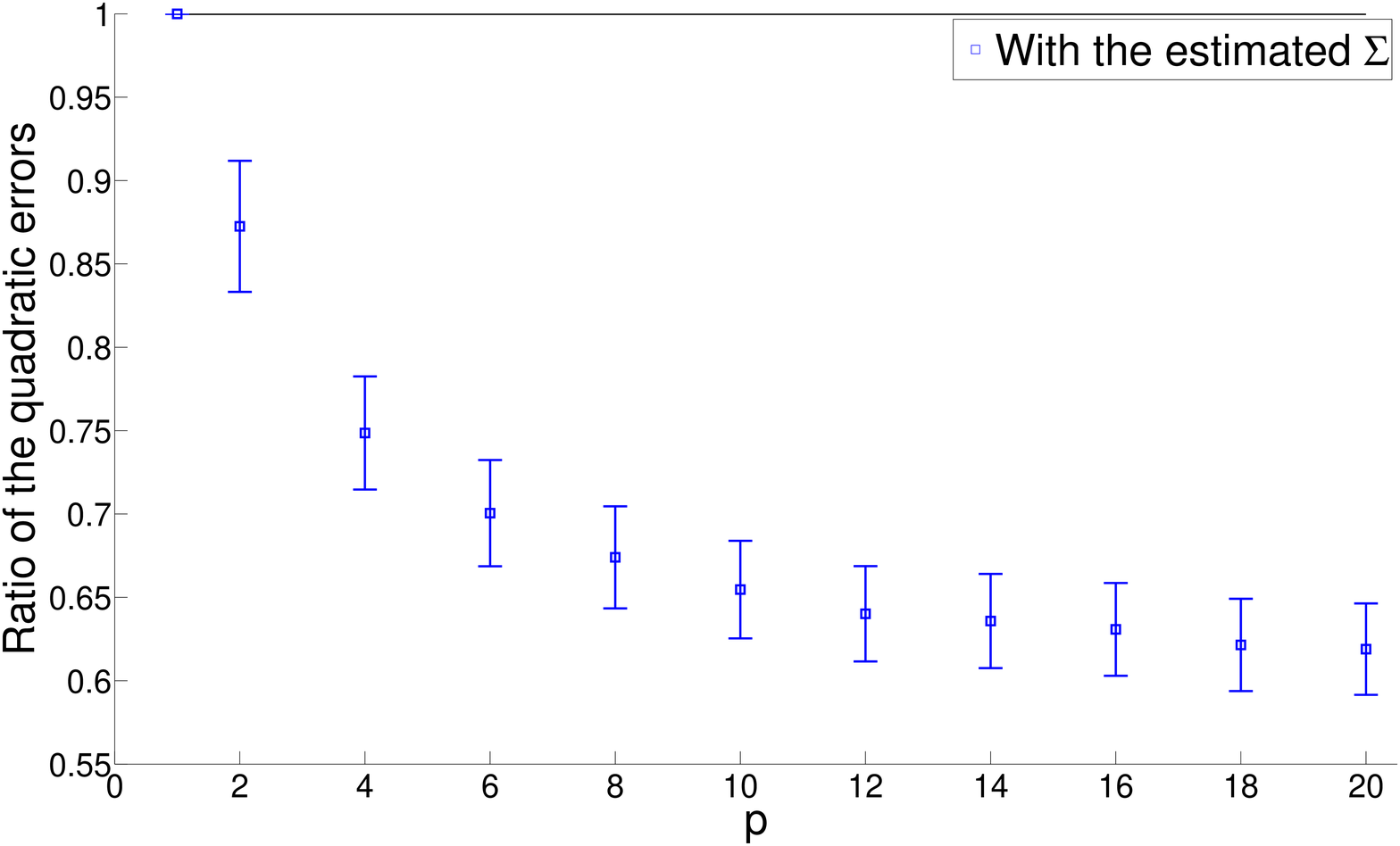}
\caption{Increasing the number of tasks $p$ (Experiment~A), improvement of multi-task compared to single-task: $\E\scroch{ \snorm{\fh_{\multitask,\widehat{\S}} - f}^2 / \snorm{ \fh_{\singletask,\widehat{\S}} -f}^2 }$.
}
\label{fig_variation_p_1}
\end{figure}

\begin{figure}
\centering
\includegraphics[width=\textwidth]{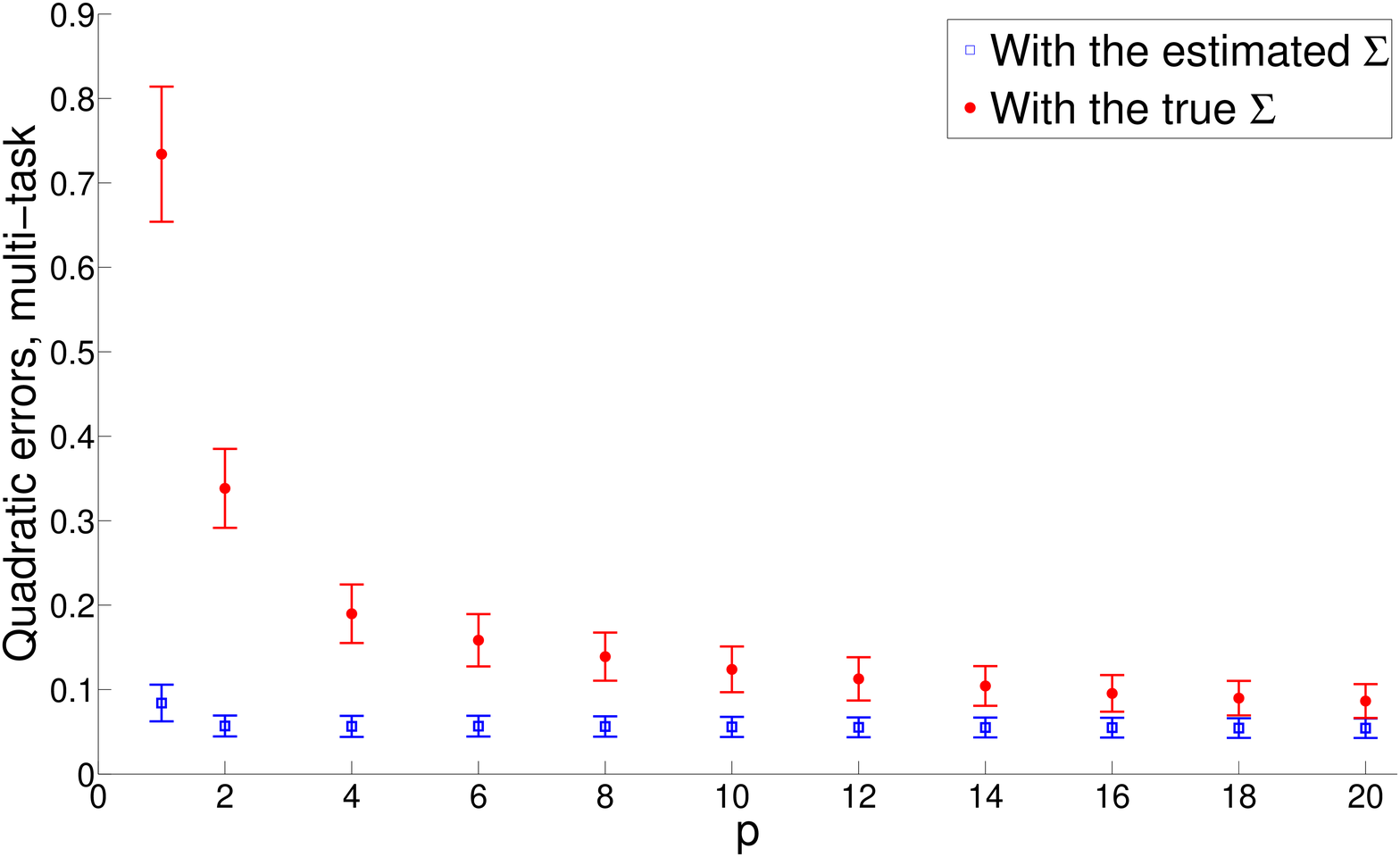}
\caption{Increasing the number of tasks $p$ (Experiment~A), quadratic errors of multi-task estimators $(np)^{-1}\E\scroch{\snorm{\fh_{\multitask,S}-f}^2}$.
Blue:  $S=\Sh$. 
Red: $S=\S$. 
}
\label{fig_variation_p_2}
\end{figure}

\begin{figure}
\centering
\includegraphics[width=\textwidth]{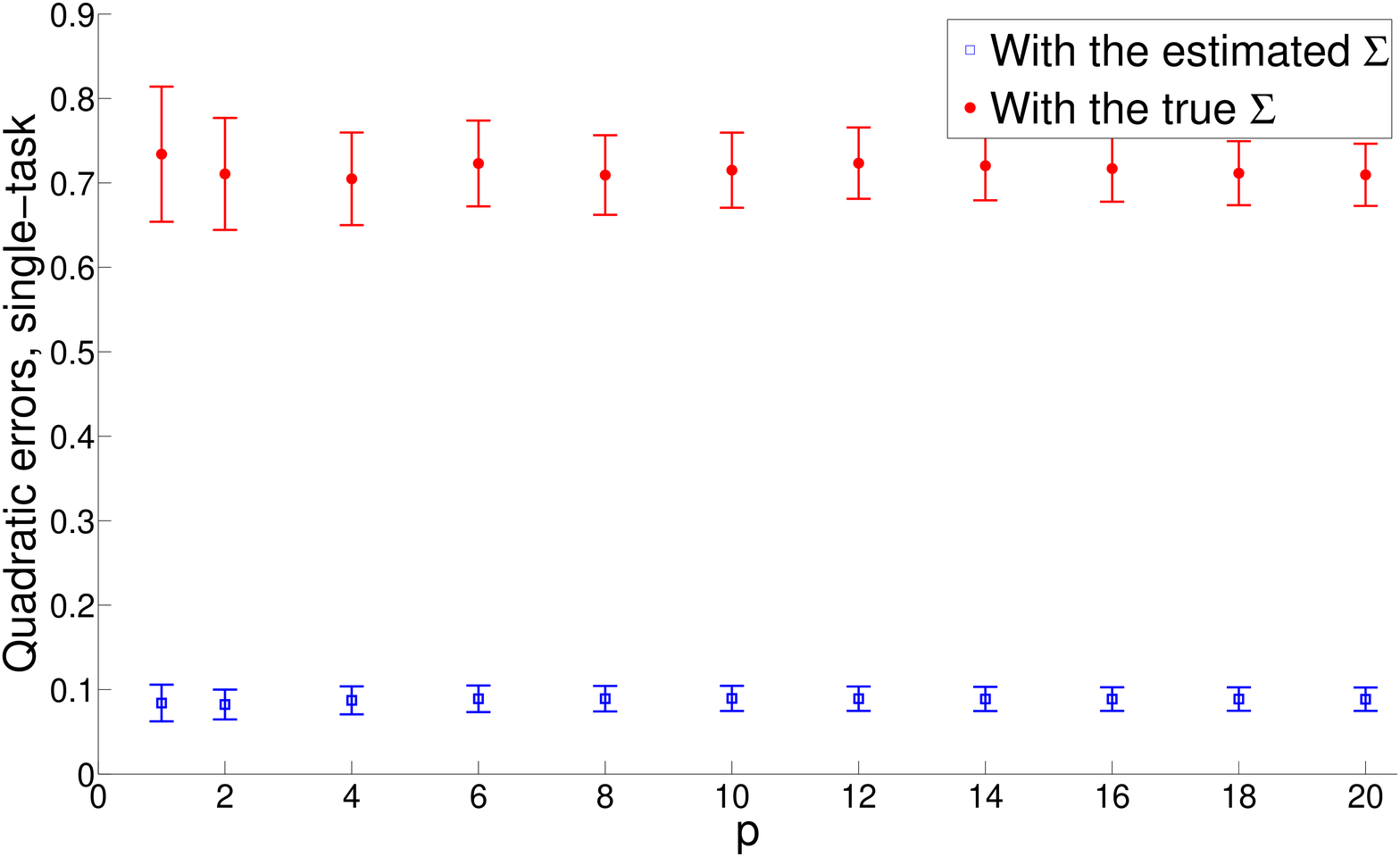}
\caption{Increasing the number of tasks $p$ (Experiment~A), quadratic errors of single-task estimators $(np)^{-1}\E\scroch{\snorm{\fh_{\singletask,S}-f}^2}$.
Blue:  $S=\Sh$. 
Red: $S=\S$. 
}
\label{fig_variation_p_3}
\end{figure}

\begin{figure}
\centering
\includegraphics[width=\textwidth]{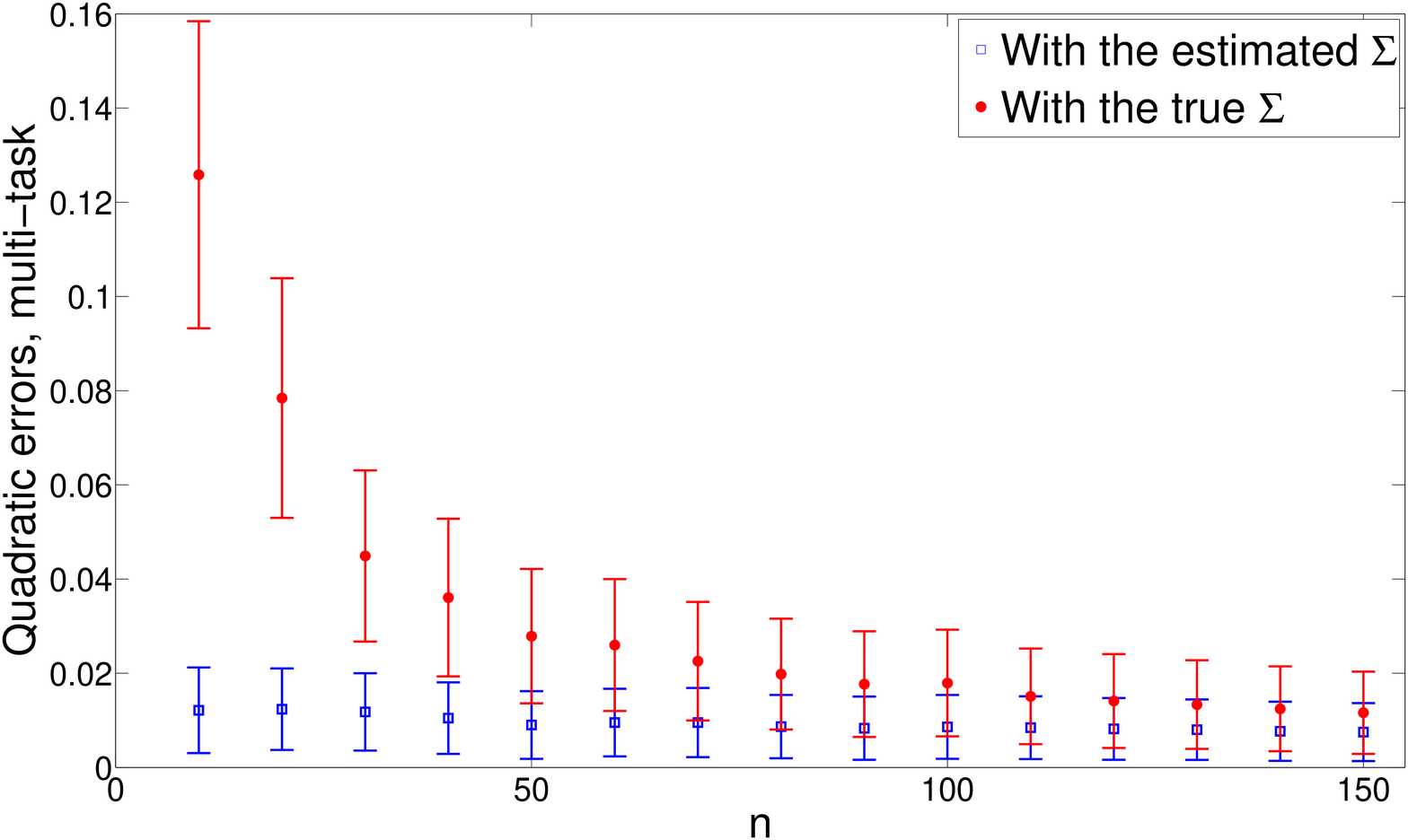}
\caption{Increasing the sample size $n$ (Experiment~B), quadratic errors of multi-task estimators $(np)^{-1}\E\scroch{\snorm{\fh_{\multitask,S}-f}^2}$.
Blue:  $S=\Sh$. 
Red: $S=\S$. 
}
\label{fig_variation_n_2}
\end{figure}

\begin{figure}
\centering
\includegraphics[width=\textwidth]{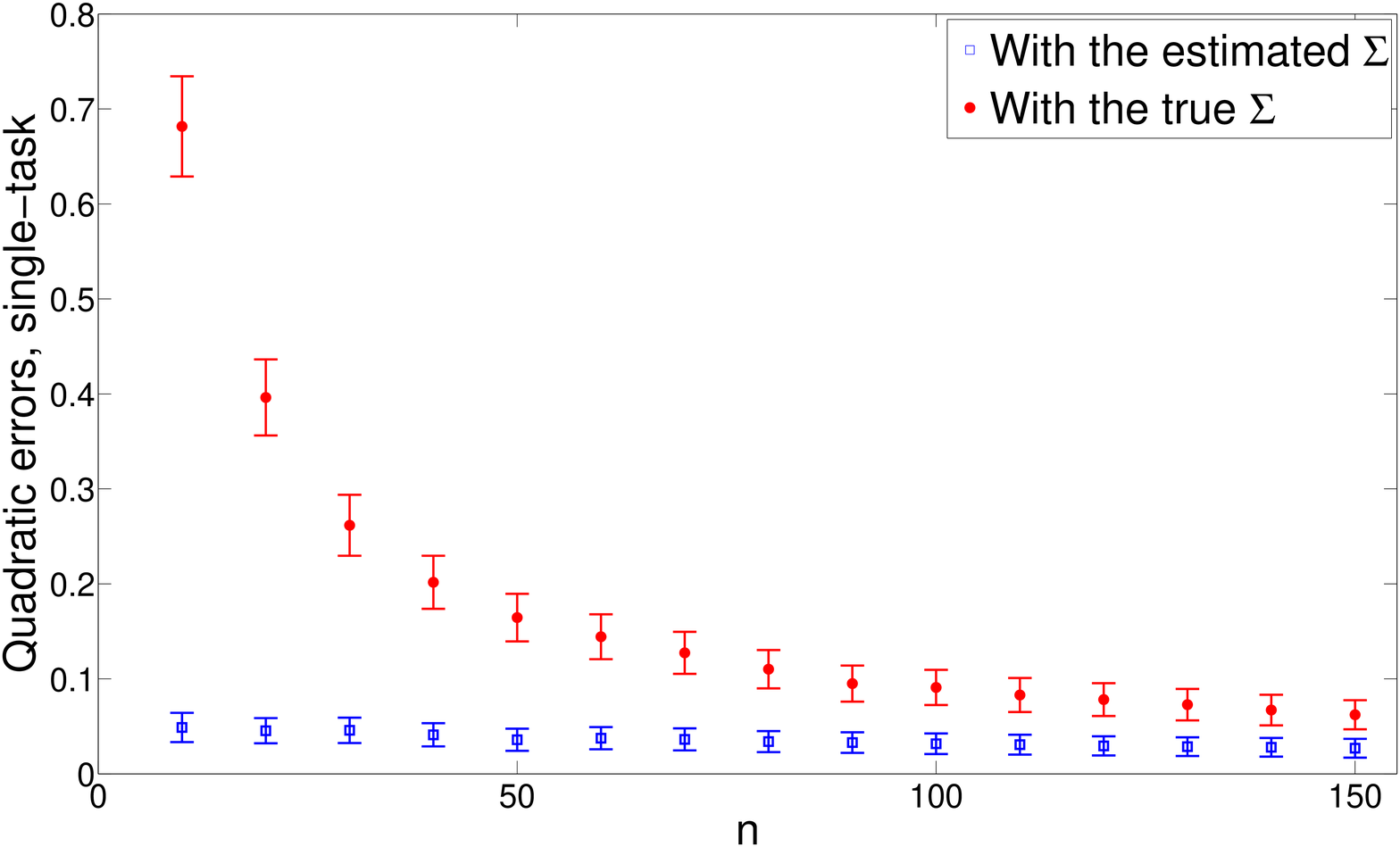}
\caption{Increasing the sample size $n$ (Experiment~B), quadratic errors of single-task estimators $(np)^{-1}\E\scroch{\snorm{\fh_{\singletask,S}-f}^2}$.
Blue:  $S=\Sh$. 
Red: $S=\S$. 
}
\label{fig_variation_n_3}
\end{figure}

\begin{figure}
\centering
\includegraphics[width=\textwidth]{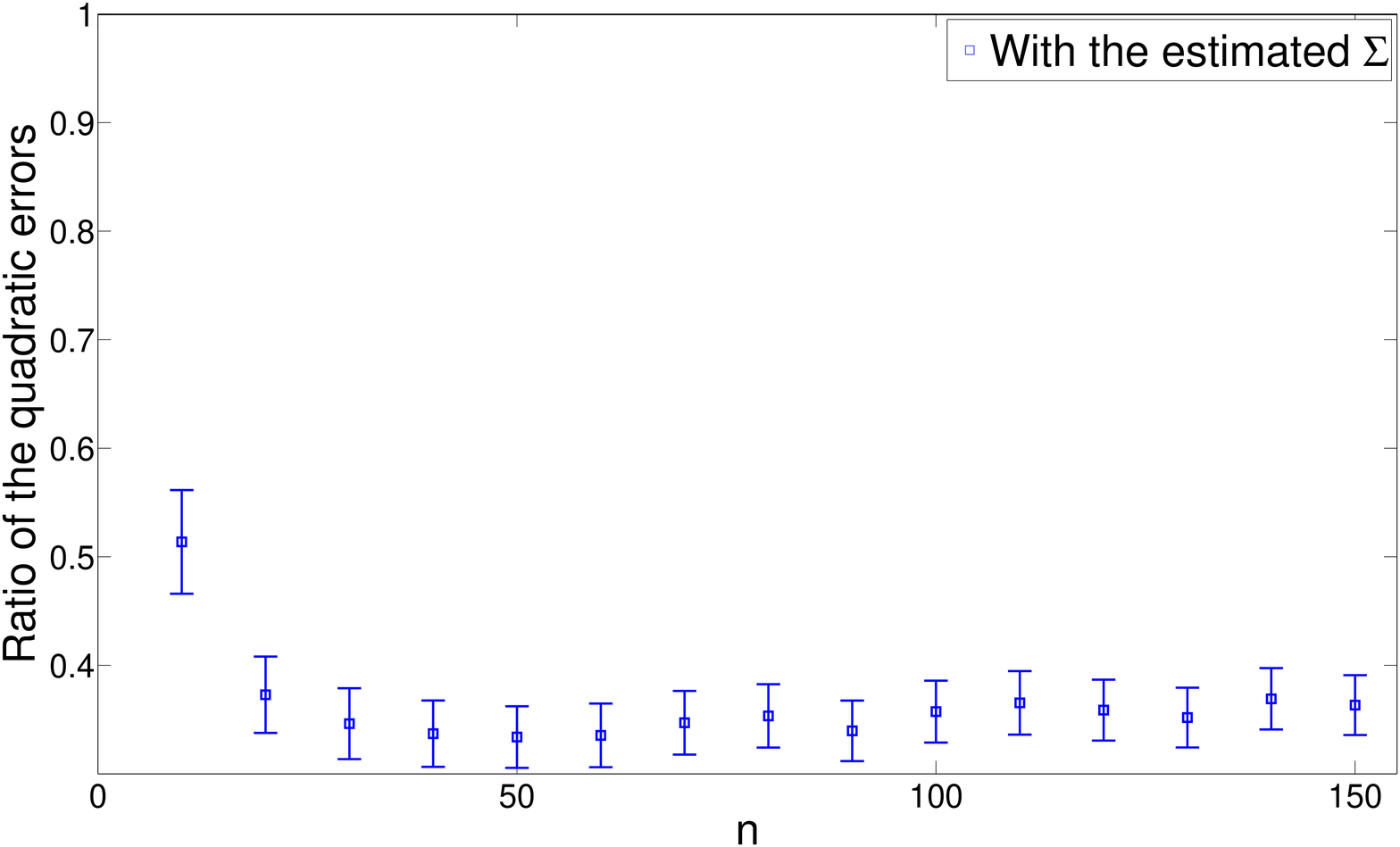}
\caption{Increasing the sample size $n$ (Experiment~B), improvement of multi-task compared to single-task:
$\E\scroch{ \snorm{\fh_{\multitask,\widehat{\S}} - f}^2 / \snorm{ \fh_{\singletask,\widehat{\S}} -f}^2 }$.
}
\label{fig_variation_n_1}
\end{figure}

\begin{figure}
\centering
\includegraphics[width=\textwidth]{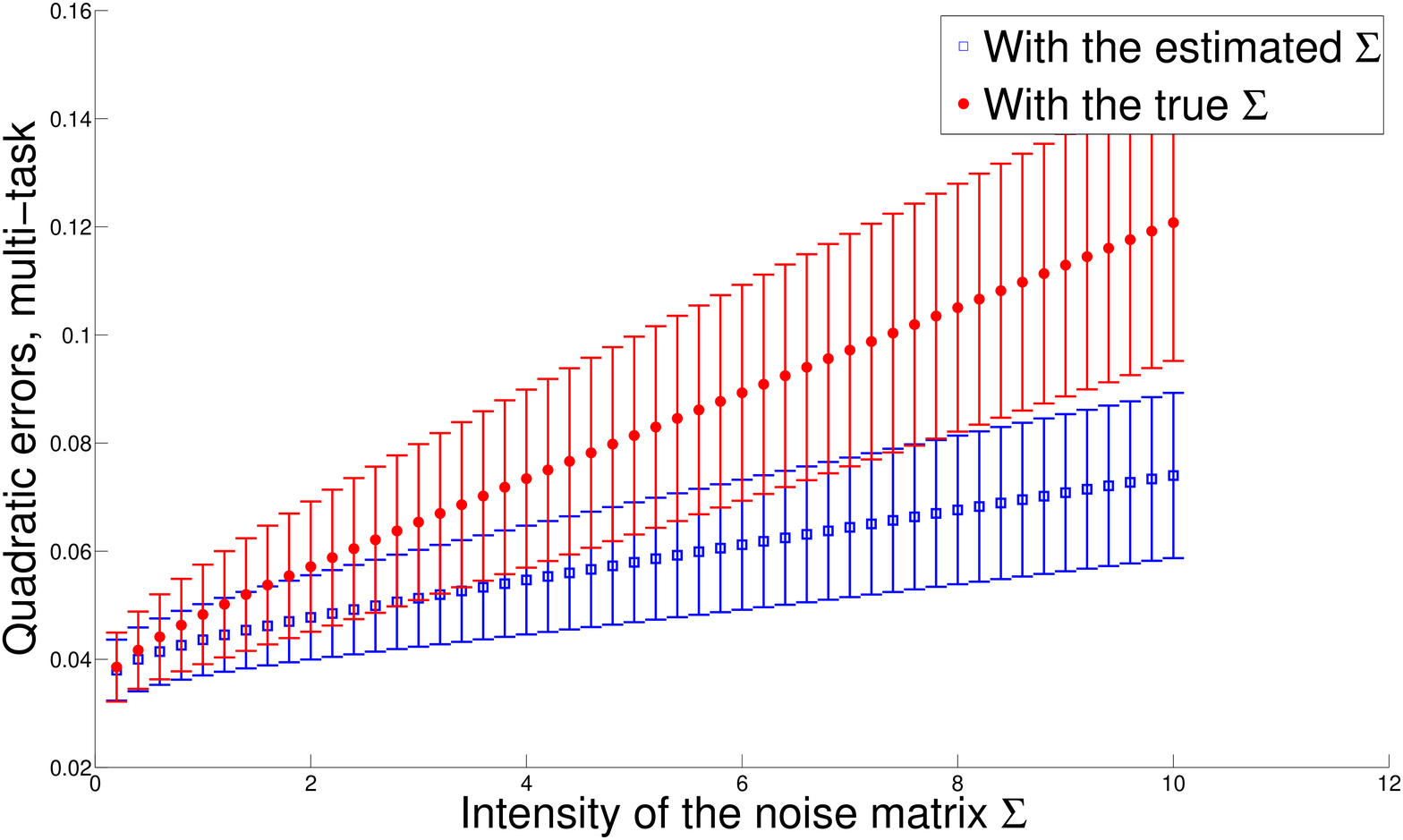}
\caption{Increasing the signal-to-noise ratio (Experiment~C), quadratic errors of multi-task estimators $(np)^{-1}\E\scroch{\snorm{\fh_{\multitask,S}-f}^2}$.
Blue:  $S=\Sh$. 
Red: $S=\S$. 
}
\label{fig_variation_sigma_2}
\end{figure}

\begin{figure}
\centering
\includegraphics[width=\textwidth]{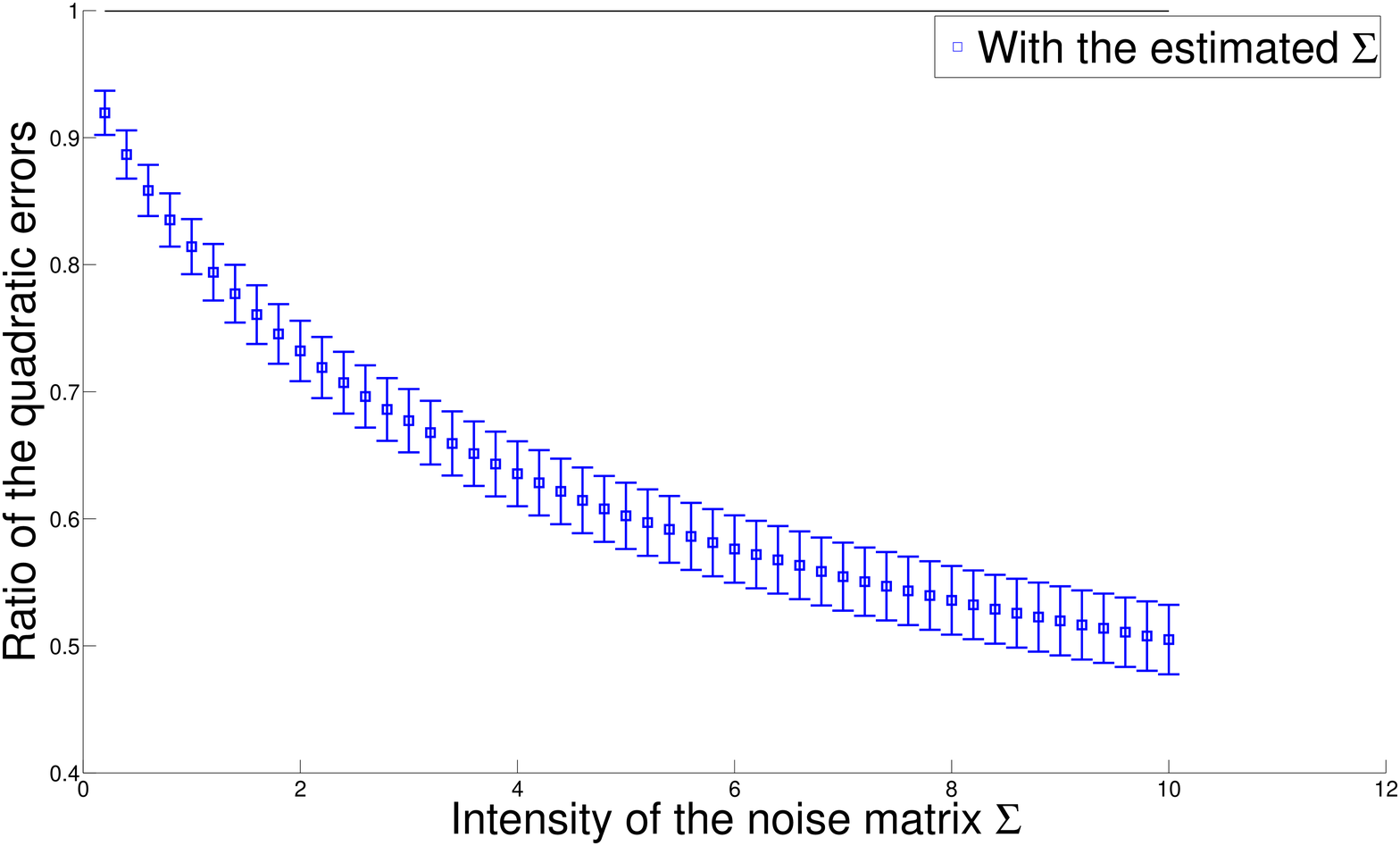}
\caption{Increasing the signal-to-noise ratio (Experiment~C), improvement of multi-task compared to single-task:
$\E\scroch{ \snorm{\fh_{\multitask,\widehat{\S}} - f}^2 / \snorm{ \fh_{\singletask,\widehat{\S}} -f}^2 }$.
}
\label{fig_variation_sigma_1}
\end{figure}

\begin{table}[!h]
\centering
\begin{tabular}{|c|c|c|}
\hline
 $t$ &0.01&100 \\
\hline
$\E\scroch{ \snorm{\fh_{\multitask,\Sh} - f}^2 / \snorm{ \fh_{\singletask,\Sh} -f}^2 }$ &$1.80\pm 0.02$ & $0.300 \pm 0.003$ \\
\hline
$\E\scroch{ \snorm{\fh_{\multitask,\Sh} - f}^2}$ &$(2.27 \pm 0.38) \times 10^{-2}$& $0.357 \pm 0.048$\\
\hline
$\E\scroch{ \snorm{\fh_{\multitask,\S} - f}^2  }$ &$(1.20 \pm 0.28) \times 10^{-2}$& $0.823 \pm 0.080 $\\
\hline
$\E\scroch{  \snorm{ \fh_{\singletask,\Sh} -f}^2 }$&$(1.26 \pm 0.26) \times 10^{-2}$& $ 1.51 \pm 0.07$\\
\hline
$\E\scroch{ \snorm{ \fh_{\singletask,\S} -f}^2 }$ &$(1.20 \pm 0.24) \times 10^{-2}$& $ 4.47 \pm 0.13$\\
\hline
\end{tabular}
\caption{Results of Experiment~C for the extreme values of $t$.}
\label{table_variation_sigma}
\end{table}

\begin{table}[!h]
\centering
\begin{tabular}{|c|c|c|c|}
\hline
$q$ & $\esp{q}$ & $\textrm{Std}[q]$ & p-value for $\mathbb{H}_0 = \set{q > 1}$\\
\hline
 $\snorm{\fh_{\clustering} - f}^2 / \snorm{ \fh_{\singletask} -f}^2 $  & $0.668 $& $0.294 $ &$<10^{-15}$\\
\hline
 $\snorm{\fh_{\segmentation} - f}^2 / \snorm{ \fh_{\singletask} -f}^2 $ &$0.660$  &$0.270$  &$<10^{-15}$ \\
\hline
 $\snorm{\fh_{\segmentation} - f}^2 / \snorm{ \fh_{\clustering} -f}^2$ & $1.00$& $0.165$  & $0.50$ \\
\hline
\end{tabular}
\caption{Clustering and segmentation (Experiment~D).}
\label{table_clustering}
\end{table}

\begin{table}[!h]
\centering
\begin{tabular}{|c|c|c|c|c|}
\hline
$q$ & $n$ &$\esp{q}$ &$\textrm{Std}[q]$  & p-value for $\mathbb{H}_0 = \set{q > 1}$ \\
\hline
$\snorm{\fh_{\multitask,\Sh_{\HM}} - f}^2 / \snorm{ \fh_{\multitask,\textrm{CV}} -f}^2$ & 10 &$0.35$ &$0.46$  &$<10^{-15}$\\
\hline
$ \snorm{\fh_{\multitask,\Sh_{\HM}} - f}^2 / \snorm{ \fh_{\multitask,\textrm{CV}} -f}^2$ & 50 &$0.56$ &$0.42$  &$<10^{-15}$\\
\hline
$ \snorm{\fh_{\multitask,\Sh_{\HM}} - f}^2 / \snorm{ \fh_{\multitask,\textrm{CV}} -f}^2$ & 100 &$0.71$ &$0.34$  &$<10^{-15}$\\
\hline
$ \snorm{\fh_{\multitask,\Sh_{\HM}} - f}^2 / \snorm{ \fh_{\multitask,\textrm{CV}} -f}^2$ & 250 &$0.87$& $0.19$  &$<10^{-15}$\\
\hline
\end{tabular}
\caption{Comparison of our method to 5-fold cross-validation (Experiment~E). }
\label{table_cv}
\end{table}

\subsection{Comments}
As expected, multi-task learning significantly helps when all $f^j$ are equal, as soon as $p$ is large enough (Figure~\ref{fig_variation_p_1}), especially for small $n$ (Figure~\ref{fig_variation_n_1}) and large noise-levels (Figure~\ref{fig_variation_sigma_1} and Table~\ref{table_variation_sigma}). Increasing the number of tasks rapidly reduces the quadratic error with multi-task estimators (Figure~\ref{fig_variation_p_2}) contrary to what happens with single-task estimators (Figure~\ref{fig_variation_p_3}).

A noticeable phenomenon also occurs in Figure~\ref{fig_variation_p_2} and even more in Figure~\ref{fig_variation_p_3}:
the estimator $\fh_{\singletask,\S}$ (that is, obtained knowing the true covariance matrix $\S$) is less efficient than $\fh_{\singletask,\Sh}$ where the covariance matrix is estimated.
It corresponds to the combination of two facts:
(i) multiplying the ideal penalty by a small factor $1<C_n<1+o(1)$ is known to often improve performances in practice when the sample size is small (see Section~6.3.2 of \citealp{Arl:2009:RP}), and
(ii) minimal penalty algorithms like Algorithm~\ref{algo1} are conjectured to overpenalize slightly when $n$ is small or the noise-level is large \citep{Ler:2010:iid} (as confirmed by Figure~\ref{fig_variation_sigma_2}).
Interestingly, this phenomenon is stronger for single-task estimators (differences are smaller in Figure~\ref{fig_variation_p_2}) and disappears when $n$ is large enough (Figure~\ref{fig_variation_n_3}), which is consistent with the heuristic motivating multi-task learning: ``increasing the number of tasks $p$ amounts to increase the sample size''.

Figures~\ref{fig_variation_n_2} and~\ref{fig_variation_n_3} show  that our procedure works well with small $n$, and that increasing $n$ does not seem to significantly improve the performance of our estimators, except in the single-task setting with $\S$ known, where the over-penalization phenomenon discussed above disappears. 

Table \ref{table_clustering} shows  that using the multitask procedure improves the estimation accuracy, both in the clustering setting and in the segmentation setting. The last line of Table \ref{table_clustering} does not show that the clustering setting improves over the ``segmentation into intervals'' one, which was awaited if  a model close to the oracle is selected in both cases.

Table \ref{table_cv} finally shows that our parameter tuning procedure outperforms 5-fold cross-validation.

\section{Conclusion and Future Work}
This paper shows that taking into account the unknown similarity between $p$ regression tasks can be done optimally (Theorem~\ref{thm_oracle_disc}).
The crucial point is to estimate the $p \times p$ covariance matrix $\S$ of the noise (covariance between tasks), in order to learn the task similarity matrix $M$.
Our main contributions are twofold.
First, an estimator of $\S$ is defined in Section~\ref{estimation_Sigma}, where non-asymptotic bounds on its error are provided under mild assumptions on the mean of the sample (Theorem~\ref{propmulti}).
Second, we show an oracle inequality (Theorem \ref{thm_oracle_disc}), more particularly with a simplified estimation of $\S$ and increased performances when the matrices of $\M$ are jointly diagonalizable (which often corresponds to cases where we have a prior knowledge of what the relations between the tasks would be). We do plan to expand our results to larger sets $\M$, which may require new concentration inequalities and new optimization algorithms.

Simulation experiments show that our algorithm works with reasonable sample sizes, and that our multi-task estimator often performs much better than its single-task counterpart.
Up to the best of our knowledge, a theoretical proof of this point remains an open problem that we intend to investigate in a future work.

\acks{This paper was supported by grants from the Agence Nationale de la Recherche ({\sc Detect} project, reference ANR-09-JCJC-0027-01) and from the European Research Council (SIERRA Project ERC-239993).}

\appendix

We give in Appendix the proofs of the different results stated in Sections~\ref{sec_anal_th}, \ref{estimation_Sigma} and~\ref{ineg_oracle}.
The proofs of our main results are contained in Sections~\ref{proof_propmulti} and~\ref{proof_thmoracle}.

\section{Proof of Proposition~\ref{prodscalG} \label{dem_prop_1}}

\begin{proof}
It is sufficient to show that $\scal{\cdot}{\cdot}_{\G}$ is positive-definite on $\G$. Take $g \in \G$ and $S = (S_{i,j})_{1\leq i\leq j\leq p}$ the symmetric postive-definite matrix of size $p$ verifying $S^2 = M$, and denote $T = S^{-1} = (T_{i,j})_{1\leq i,j\leq p}$. Let $f$ be the element of $\G$ defined by $\forall i \in \{1\dots p\},~ g(\cdot,i) = \sum_{k=1}^n T_{i,k}f(\cdot,k)$. We then have:
\begin{align*}
\scal{g}{g}_{\G} &= \sum_{i=1}^p \sum_{j=1}^p M_{i,j}\scal{g(\cdot,i)}{g(\cdot,j)}_{\F} \\
                 &= \sum_{i=1}^p \sum_{j=1}^p \sum_{k=1}^p \sum_{l=1}^p M_{i,j}T_{i,k}T_{j,l}\scal{f(\cdot,k)}{f(\cdot,l)}_{\F}\\
                 &= \sum_{j=1}^p \sum_{k=1}^p \sum_{l=1}^p T_{l,j}\scal{f(\cdot,k)}{f(\cdot,l)}_{\F} \sum_{i=1}^p M_{j,i}T_{i,k}\\
                 &= \sum_{j=1}^p \sum_{k=1}^p \sum_{l=1}^p T_{l,j}\scal{f(\cdot,k)}{f(\cdot,l)}_{\F} (M\cdot T)_{j,k}\\
                 &= \sum_{k=1}^p \sum_{l=1}^p T_{l,j}\scal{f(\cdot,k)}{f(\cdot,l)}_{\F} \sum_{j=1}^p T_{l,j}(M\cdot T)_{j,k}\\
                 &= \sum_{k=1}^p \sum_{l=1}^p \scal{f(\cdot,k)}{f(\cdot,l)}_{\F} (T\cdot M\cdot T)_{k,l}\\
                 &= \sum_{k=1}^p \|f(\cdot,k)\|_{\F}^2.
\end{align*}
This shows that $\scal{g}{g}_{\G} \geq 0$ and that $\scal{g}{g}_{\G} = 0 \Rightarrow f=0 \Rightarrow g=0$.
\end{proof}

\section{Proof of Corollary~\ref{cor_GRKHS} \label{dem_cor_RKHS}}

\begin{proof}
If $(x,j) \in  \X \times \{1,\dots,p\}$, the application $(f^1,\dots,f^p) \mapsto f^j(x)$ is clearly continuous. We now show that $(\G,\scal{\cdot}{\cdot}_{\G})$ is complete. If $(g_n)_{n\in \NN}$ is a  Cauchy sequence of $\G$ and if we define, as in Section~\ref{dem_prop_1}, the functions $f_n$   by $\forall n \in \NN,~ \forall i \in \{1\dots p\},~ g_n(\cdot,i) = \sum_{k=1}^p T_{i,k}f_n(\cdot,k)$. The same computations show that $(f_n(\cdot,i))_{n\in \NN}$ are Cauchy sequences of $\F$, and thus converge. So the sequence $(f_n)_{n\in \NN}$ converges in $\G$, and $(g_n)_{n\in \NN}$ does likewise.
 \end{proof}

\section{Proof of Proposition~\ref{prop_noy_multi}\label{dem_prop_noy_multi}}

\begin{proof}
We define
\begin{equation*}
  \tilde{\Phi}(x,j) = M^{-1} \cdot
\begin{pmatrix}
  \d_{1,j}\Phi(x) \\
  \vdots \\
  \d_{p,j}\Phi(x) \\
\end{pmatrix}\virg
\end{equation*}
with $\d_{i,j} = \mathbf{1}_{i=j}$ being the Kronecker symbol, that is, $\d_{i,j} = 1 $ if $i=j$ and $0$ otherwise. We now show that $\tilde{\Phi}$ is the feature function of the RKHS. For $g \in \G$ and $(x,l) \in \X \times \{1,\dots,p\}$, we have:

\begin{align*}
  \scal{g}{\tilde{\Phi}(x,l)}_{\G} & = \sum_{j=1}^p \sum_{i=1}^p M_{j,i} \scal{g(\cdot,j)}{\tilde{\Phi}(x,l)^i}_{\F} \\
  & = \sum_{j=1}^p \sum_{i=1}^p \sum_{m=1}^p M_{j,i}M^{-1}_{i,m} \d_{m,l} \scal{g(\cdot,j)}{\Phi(x)}_{\F} \\
  & = \sum_{j=1}^p  \sum_{m=1}^p (M \cdot M^{-1})_{j,m} \d_{m,l} g(x,j) \\
  & = \sum_{j=1}^p  \d_{j,l} g(x,j) = g(x,l) \pt
\end{align*}
Thus we can write:

\begin{align*}
  \tilde{k}((x,i),(y,j)) & = \scal{\tilde{\Phi}(x,i)}{\tilde{\Phi}(y,j)}_{\G} \\
  & = \sum_{h=1}^p \sum_{h'=1}^p M_{h,h'} \scal{M^{-1}_{h,i}\Phi(x)}{M^{-1}_{h',j}\Phi(y)}_{\F}\\
  & = \sum_{h=1}^p \sum_{h'=1}^p M_{h,h'}M^{-1}_{h,i}M^{-1}_{h',j}K(x,y)\\
  & = \sum_{h=1}^p M^{-1}_{h,i}(M\cdot M^{-1})_{h,j}K(x,y)\\
  & = \sum_{h=1}^p M^{-1}_{h,i}\d_{h,j}K(x,y) = M^{-1}_{i,j} K(x,y) \pt
\end{align*}
\end{proof}

\section{Computation of the Quadratic Risk in Example \ref{ex_calcul_oracle}\label{app_calcul_oracle}}
We consider here that $f^1=\dots =f^p$. We use the set $\Mmultiens$:
\begin{equation*}
\Mmultiens \egaldef \set{ \Mmulti \paren{\l,\mu} = (\l+p\mu) I_p - \frac{\mu}{p} \boldsymbol{1}\boldsymbol{1}\trsp \, / \, (\l,\mu) \in (0,+\infty)^2 }
\end{equation*}
Using the estimator $\fh_{M} = A_{M} y$ we can then compute the quadratic risk using the bias-variance decomposition given in Equation~\eqref{eqfond1}:
\begin{equation*}
  \esp{\norr{\fh_{M} -f}^2} =\|(A_{M}-I_{np})f\|_2^2 + \tr(A_{M}\trsp A_{M}\cdot(\S \otimes I_n))  \pt
\end{equation*}
Les us denote by $(e_1,\dots,e_p)$ the canonical basis of $\R^p$. The eigenspaces of $p^{-1}\boldsymbol{1}\boldsymbol{1}\trsp$ are:
\begin{itemize}
  \item $\linspan{e_1+\dots+e_p}$ corresponding to eigenvalue $p$,
  \item $\linspan{e_2-e_1,\dots,e_p-e_1}$  corresponding to eigenvalue $0$.
\end{itemize}
Thus, with $\mutilde = \l+p\mu$ we can diagonalize in an orthonormal basis any matrix $M_{\l,\mu}\in\M$ as $M=P\trsp D_{\l,\mutilde}P$, with $D = D_{\l,\mutilde} = \diag\{\l,\mutilde,\dots,\mutilde\}$. Les us also diagonalise in an orthonormal basis $K$: $K = Q\trsp \Delta Q$, $\Delta = \diag\{\mu_1,\dots,\mu_n\}$. Thus we can write (see Properties \ref{prop_mult_kron} and \ref{prop_trsp_kron} for basic properties of the Kronecker product):
\begin{equation*}
A_M = A_{M_{\l,\mu}} = (P\trsp \otimes Q\trsp) \left[(D^{-1}\otimes \Delta)\left((D^{-1}\otimes \Delta)+npI_{np} \right)^{-1} \right](P \otimes Q) \pt
\end{equation*}
We can then note that $(D^{-1}\otimes \Delta)\left((D^{-1}\otimes \Delta)+npI_{np} \right)^{-1}$ is a diagonal matrix, whose  diagonal entry of index $(j-1)n+i$ ($i\in\{1,\dots,n\}$, $j\in\{1,\dots,p\}$) is
\begin{equation*}
\left\{ \begin{array}{lr} &\frac{\mu_i}{\mu_i+np\l} \textrm{ if } j=1\virg\\
                          &\frac{\mu_i}{\mu_i+np\mutilde} \textrm{ if } j>1\pt \end{array}\right.
\end{equation*}
We can now compute both bias and variance.
\begin{description}
\item[Bias:] We can first remark that $(P\trsp \otimes Q\trsp) = (P \otimes Q)\trsp$ is an orthogonal matrix and that $P\times \boldsymbol{1} = (1,0,\dots,0)\trsp$. Thus, as in this setting $f^1=\dots =f^p$, we have $f = \boldsymbol{1}\otimes (f^1(X_1),\dots,f^1(X_n))\trsp$ and $(P\trsp \otimes Q\trsp)f = (1,0,\dots,0)\trsp \otimes Q(f^1(X_1),\dots,f^1(X_n))\trsp$. To keep notations simple we note $Q(f^1(X_1),\dots,f^1(X_n))\trsp \egaldef (g_1,\dots,g_n)\trsp$. Thus
\begin{align*}
  \|(A_{M}-I_{np})f\|_2^2 &= \|(P \otimes Q)\trsp \left[(D^{-1}\otimes K)\left((D^{-1}\otimes K)+npI_{np} \right)^{-1} -I_{np}\right](P \otimes Q)f\|_2^2 \\
                         &= \|\left[(D^{-1}\otimes \Delta)\left((D^{-1}\otimes \Delta)+npI_{np} \right)^{-1} -I_{np}\right] \\
                         & ~~~~~~~~~~~~~~\times(1,0,\dots,0)\trsp \otimes (g_1,\dots,g_n)\trsp\|_2^2 \pt
\end{align*}
As only the  first $n$ terms of $(P \otimes Q)f$ are non-zero we can finally write
\begin{equation*}
  \|(A_{M}-I_{np})f\|_2^2 = \sum_{i=1}^{n} \left(\frac{np\l}{\mu_i+np\l}\right)^2 g_i^2 \pt
\end{equation*}
\item[Variance:] First note that
\begin{equation*}
  (P \otimes Q)(\S \otimes I_n)(P \otimes Q)\trsp = (P \S P\trsp \otimes I_n) \pt
\end{equation*}
We can also note that $\tilde{\Sigma} \egaldef P \S P\trsp$ is a symmetric positive definite matrix, with positive diagonal coefficients. Thus we can finally write
\begin{align*}
\tr(A_{M}\trsp A_{M}\cdot(\S \otimes I_n)) &= \tr\bigg(P \otimes Q)\trsp\left[(D^{-1}\otimes \Delta)\left((D^{-1}\otimes \Delta)+npI_{np} \right)^{-1}\right]^2 \\
                                          & ~~~~~~~~~~~~~~ \times (P \otimes Q)(\S \otimes I_n)  \bigg) \\
                                          &= \tr\bigg(\left[(D^{-1}\otimes \Delta)\left((D^{-1}\otimes \Delta)+npI_{np} \right)^{-1}\right]^2 \\
                                          &  ~~~~~~~~~~~~~~ \times(P \otimes Q) (\S \otimes I_n)(P \otimes Q)\trsp \bigg)\\
                                          &= \sum_{i=1}^n \left[\left(\frac{\mu_i}{\mu_i+np\l}\right)^2\tilde{\Sigma}_{1,1}+\left(\frac{\mu_i}{\mu_i+np\mutilde}\right)^2\sum_{j=2}^p\tilde{\Sigma}_{j,j}\right] \pt
\end{align*}
\end{description}
As noted at the end of Example \ref{ex_calcul_oracle} this leads to an oracle which has all its $p$ functions equal.

\subsection{Proof of Equation~\eqref{pen_id_HM} in Section~\ref{result_diagonalizable}} Let $M\in \mathcal{S}_p^{++}(\R)$, $P \in \mathcal{O}_p(\R)$ such that $M = P\trsp \diag(d_1,\dots,d_p)P$ and $\tilde{\S} = P\Sigma P\trsp$. We recall that $A_{\l}=K(K+n\l I_n)^{-1}$. The computations detailed above also show that the ideal penalty introduced in \Eq{defpenidmulti} can be written as
\begin{equation*}
 \pen_{\id}(M) = \frac{2\tr \big( A_{M}\cdot(\S \otimes I_n) \big) }{np} = \frac{2}{np} \left(\sum_{j=1}^p \tr(A_{pd_j})\tilde{\S}_{j,j}  \right) \pt
\end{equation*}

\section{Proof of Theorem~\ref{propmulti} \label{proof_propmulti}}
Theorem~\ref{propmulti} is proved in this section, after stating some classical linear algebra results (Section~\ref{sec.thm.propmulti.tools}).

\subsection{Some Useful Tools} \label{sec.thm.propmulti.tools}
We now give two properties of the Kronecker product, and then introduce a useful norm on $\mathcal{S}_p(\R)$, upon which we give several properties. Those are the tools needed to prove Theorem~\ref{propmulti}.

\begin{property} \label{prop_mult_kron}
  The Kronecker product is bilinear, associative and for every matrices $A,B,C,D$ such that the dimensions fit, $(A\otimes B)(C \otimes D) = (AC)\otimes (BD)$.
\end{property}

\begin{property} \label{prop_trsp_kron}
  Let $A \in \mathcal{M} _{n}(\R)$, $B \in\mathcal{M} _{B}(\R)$,  $(A \otimes B)\trsp = (A\trsp \otimes B\trsp)$.
\end{property}

\begin{definition}
We now introduce the norm $\nor{\cdot}$ on $\mathcal{S}_p(\R)$, which is the modulus of the eigenvalue of largest magnitude, and can be defined by
\begin{equation*}
  \nor{S} \egaldef \sup_{z \in \R^p, \norr{z}=1} \absj{ z\trsp Sz } \pt
\end{equation*}
\end{definition}
\noindent This norm has several interesting properties, some of which we will use are stated below.

\begin{property}
The norm $\nor{\cdot}$ is a matricial norm: $\forall (A,B) \in \mathcal{S}_p(\R)^2,~ \nor{AB} \leq \nor{A}\nor{B} $.
\end{property}
 \noindent We will use the following result, which is a consequence of the preceding Property.

\begin{equation*}
  \forall S \in \mathcal{S}_p(\R), ~ \forall T \in \mathcal{S}_p^{++}(\R), ~ \nor{T^{-\frac{1}{2}}ST^{-\frac{1}{2}}} \leq \nor{S}\nor{T^{-1}} \pt \\
\end{equation*}
We also have:

\begin{proposition}
\begin{equation*}
  \forall \S \in \mathcal{S}_p(\R),~ \nor{\S \otimes I_{n}} = \nor{\S} \pt
\end{equation*}
\end{proposition}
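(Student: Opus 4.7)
The plan is to identify the eigenvalues of $\S \otimes I_n$ explicitly, and then observe that the spectrum of $\S \otimes I_n$ is exactly the spectrum of $\S$, each eigenvalue repeated $n$ times. Since $\nor{\cdot}$ is the spectral radius, the equality will follow immediately.

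First I would diagonalize $\S$: since $\S \in \mathcal{S}_p(\R)$, write $\S v_i = \l_i v_i$ for an orthonormal basis $(v_1,\ldots,v_p)$ of $\R^p$, with $\l_i \in \R$. Let $(e_1,\ldots,e_n)$ be the canonical basis of $\R^n$. Using the mixed-product property of the Kronecker product (Property~\ref{prop_mult_kron}), I compute, for any $i \in \set{1,\ldots,p}$ and $j \in \set{1,\ldots,n}$,
\begin{equation*}
(\S \otimes I_n)(v_i \otimes e_j) = (\S v_i) \otimes (I_n e_j) = \l_i (v_i \otimes e_j) \pt
\end{equation*}
The family $(v_i \otimes e_j)_{1 \leq i \leq p,\, 1 \leq j \leq n}$ is an orthonormal basis of $\R^{np}$, so these are all the eigenvalues of $\S \otimes I_n$. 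Hence the spectrum of $\S \otimes I_n$ equals the spectrum of $\S$ (with each eigenvalue having multiplicity multiplied by $n$).

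Finally, since $\S \otimes I_n$ is symmetric and $\nor{\cdot}$ is the largest modulus of an eigenvalue, I conclude
\begin{equation*}
\nor{\S \otimes I_n} = \max_{1 \leq i \leq p} \absj{\l_i} = \nor{\S} \pt
\end{equation*}
The proof is essentially a one-line linear-algebra fact; no step here looks like a genuine obstacle, the only care needed is to invoke Property~\ref{prop_mult_kron} correctly so that $(\S \otimes I_n)(v_i \otimes e_j)$ is unfolded properly.
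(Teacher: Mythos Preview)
Your proof is correct and follows essentially the same approach as the paper: both arguments diagonalize $\S \otimes I_n$ by leveraging an orthonormal eigenbasis of $\S$ together with the mixed-product property of the Kronecker product, concluding that the spectrum of $\S \otimes I_n$ is that of $\S$ with each eigenvalue repeated $n$ times. The only cosmetic difference is that the paper writes the diagonalization at the matrix level ($\S \otimes I_n = (U \otimes I_n)\trsp (D \otimes I_n)(U \otimes I_n)$) while you exhibit the eigenvectors $v_i \otimes e_j$ directly.
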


\begin{proof}
We can diagonalize $\S$ in an orthonormal basis: $\exists U \in \mathcal{O}_n(\R),~ \exists D = \textrm{Diag}(\mu_1,\dots,\mu_p),~ \S = U\trsp DU$. We then have, using the properties of the Kronecker product:
\begin{align*}
  \S \otimes I_{n} &= (U\trsp \otimes I_{n})(D \otimes I_{n}) (U \otimes I_{n}) \\
    &=(U \otimes I_{n})\trsp( D \otimes I_{n}) (U \otimes I_{n}) \pt
\end{align*}
We just have to notice that $U \otimes I_{n} \in \mathcal{O}_{np}(\R)$ and that:
\begin{equation*}
  D \otimes I_{n} = \textrm{Diag}(\underbrace{\mu_1,\dots,\mu_1}_{n \textrm{ times}},\dots,\underbrace{\mu_p,\dots,\mu_p}_{n \textrm{ times}}) \pt
\end{equation*}
\end{proof}
\noindent This norm can also be written in other forms:

\begin{property}\label{normat2}
If $M \in \mathcal{M}_n(\R)$, the operator norm $\norm{M}_2 \egaldef \sup_{t \in \R^n\backslash \{0\}} \left\{ \frac{\|Mt\|_2}{\|t\|_2}\right\}$ is equal to the greatest singular value of $M$: $\sqrt{\r(M\trsp M)}$. Henceforth, if $S$ is symmetric, we have  $\nor{S} = \|S\|_2$
\end{property}

\subsection{The Proof}
We now give a proof of Theorem~\ref{propmulti}, using Lemmas~\ref{lemmec}, \ref{lemmeC1} and~\ref{lemmeC2}, which are stated and proved in Section~\ref{sec.pr.Thm2.lemmes}.
The outline of the proof is the following:
\begin{enumerate}
\item Apply Theorem~\ref{thmono} to problem \eqref{Pz} for every $z \in \mathcal{Z}$ in order to
\item control $\norm{s-\zeta}_{\infty}$ with a large probability, where $s,\zeta \in \R^{p(p+1)/2}$ are defined by
\begin{align*}
s &\egaldef (\S_{1,1},\ldots,\S_{p,p},\S_{1,1}+\S_{2,2}+2\S_{1,2},\ldots,\S_{i,i}+\S_{j,j}+2\S_{i,j},\ldots)  \\
\mbox{and} \quad \zeta &\egaldef (a(e_1),\ldots,a(e_p),a(e_1+e_2),\ldots,a(e_1+e_p),a(e_2+e_3),\ldots,a(e_{p-1}+e_p))
\pt \end{align*}
\item Deduce that $\Sh=J(\zeta)$ is close to $\S=J(s)$ by controlling the Lipschitz norm of $J$.
\end{enumerate}

\begin{proof}
{\em 1. Apply Theorem~\ref{thmono}:} \newblock
We start by noticing that Assumption~\eqref{Hdf} actually holds true with all $\l_{0,j}$ equal.
Indeed, let $(\l_{0,j})_{1 \leq j \leq p}$ be given by  Assumption~\eqref{Hdf} and define $\l_0 \egaldef \min_{j=1, \ldots, p} \l_{0,j}$. Then, $\l_0 \in (0,+\infty)$ and $\df(\l_0)$ since all $\l_{0,j}$ satisfy these two conditions.
For the last condition, remark that for every $j\in\sset{1,\ldots,p}$, $\l_0 \leq \l_{0,j}$ and $\l \mapsto \snorm{(A_{\l} - I) F_{e_j}}_2^2$ is a nonincreasing function (as noticed in \citealp{Arl_Bac:2009:minikernel_long} for instance), so that
\begin{equation} \label{eq.prThm1.1} \frac{1}{n} \norm{(A_{\l_0} - I_n) F_{e_j}}_2^2 \leq \frac{1}{n} \norm{(A_{\l_{0,j}} - I_n) F_{e_j}}_2^2 \leq \Sigma_{j,j} \sqrt{\frac{\ln(n)}{n}} \pt \end{equation}
In particular, \Eq{eq.hypThm1} holds with $d_n=1$ for problem \eqref{Pz} whatever $z \in \sset{e_1, \ldots, e_p}$.

Let us now consider the case $z=e_i+e_j$ with $i \neq j \in \sset{1, \ldots, p}$.
Using \Eq{eq.prThm1.1} and that $F_{e_i+e_j} = F_{e_i} + F_{e_j}$, we have
\begin{equation*}
\norm{ (B_{\l_0}-I_n)F_{e_i+e_j} }_2^2  \leq \norm{ (B_{\l_0}-I_n)F_{e_i} }_2^2 + \norm{(B_{\l_0}-I_n)F_{e_j}}_2^2
 +2\scal{(B_{\l_0}-I_n)F_{e_i}}{(B_{\l_0}-I_n)F_{e_j}} \pt
\end{equation*}
The last term is bounded as follows:
\begin{align*}
2\scal{(B_{\l_0}-I_n)F_{e_i}}{(B_{\l_0}-I_n)F_{e_j}} &\leq 2\|(B_{\l_0}-I_n)F_{e_i}\|\cdot\|(B_{\l_0}-I_n)F_{e_j}\|\\
 & \leq 2\sqrt{n\ln(n)}\sqrt{\S_{i,i}\S_{j,j}} \\
 & \leq \sqrt{n\ln(n)} (\S_{i,i}+\S_{j,j}) \\
 & \leq (1+c(\Sigma)) \sqrt{n\ln(n)} (\S_{i,i}+\S_{j,j} + 2\S_{i,j}) \\
 & =  (1+c(\Sigma)) \sqrt{n\ln(n)} \sigma^2_{e_i+e_j}
 \virg
\end{align*}
because Lemma~\ref{lemmec} shows
\begin{equation*}
2 (\S_{i,i}+\S_{j,j}) \leq (1+c(\Sigma)) (\S_{i,i}+\S_{j,j} + 2\S_{i,j})\pt
\end{equation*}
Therefore, \Eq{eq.hypThm1} holds with $d_n=1+c(\Sigma)$ for problem \eqref{Pz} whatever $z \in \mathcal{Z}$.

{\em 2. Control $\norm{s-\zeta}_{\infty}$:} \newblock
Let us define
\[ \eta_1 \egaldef \b (2+\d) (1+c(\Sigma)) \sqrt{\frac{\ln(n)}{n}} \pt \]
By Theorem~\ref{thmono}, for every $z \in \mathcal{Z}$, an event $\Omega_z$ of probability greater than $1- n^{-\d}$ exists on which, if $n \geq n_0(\d)$,
\begin{equation*}
  (1-\eta_1) \s_z^2 \leq a(z) \leq (1+\eta_1) \s_z^2 \pt
\end{equation*}
So, on $\Omega \egaldef \bigcap_{z \in Z} \Omega_z$,
\begin{equation} \label{eq.prThm1.2}
\norm{\zeta-s}_{\infty} \leq \eta_1 \norm{s}_{\infty}
 \virg
\end{equation}
and $\Proba(\Omega) \geq 1 -  p(p+1)/2\times n^{-\d}$ by the union bound.
Let
\[ \norm{ \S }_{\infty} \egaldef \sup_{i,j}\absj{ \S_{i,j} } \quad \mbox{and} \quad  C_1(p) \egaldef \sup_{\S \in \mathcal{S}_p(\R)} \set{ \frac{\norm{\S}_{\infty}} {\nor{\S}} } \pt \]
Since  $\norm{ s }_{\infty} \leq 4 \norm{ \S }_{\infty}$ and $C_1(p)=1$ by Lemma~\ref{lemmeC1}, \Eq{eq.prThm1.2} implies that on $\Omega$,
\begin{equation} \label{eq.prThm1.3}
\norm{\zeta-s}_{\infty} \leq 4 \eta_1 \norm{\S}_{\infty} \leq 4 \eta_1 \nor{\S}
 \pt
\end{equation}

{\em 3. Conclusion of the proof:} \newblock
Let
\begin{equation*}
 C_2(p) \egaldef \sup_{\zeta \in \R^{p(p+1)/2}} \set{ \frac{\nor{J(\zeta)}}{\norm{\zeta}_{\infty}} } \pt
\end{equation*}
By Lemma~\ref{lemmeC2}, $C_2(p) \leq \frac{3}{2} p$.
By \Eq{eq.prThm1.3}, on $\Omega$,
\begin{equation} \label{eq.prThm1.4}
  \nor{\widehat{\S} - \S} = \nor{J(\zeta)-J(s)} \leq C_2(p) \norm{ \zeta - s }_{\infty}
  \leq 4 \eta_1 C_2(p) \nor{\S} \pt
\end{equation}
Since
\begin{equation*}
  \nor{\S^{-\frac{1}{2}}\widehat{\S}\S^{-\frac{1}{2}} - I_p} = \nor{\S^{-\frac{1}{2}}(\S - \widehat{\S})\S^{-\frac{1}{2}}} \leq \nor{\S^{-1}}\nor{\S - \widehat{\S}}\virg
\end{equation*}
and $\nor{\S}\nor{\S^{-1}} = c(\Sigma)$, \Eq{eq.prThm1.4} implies that on $\Omega$,
\begin{equation*}
  \nor{\S^{-\frac{1}{2}}\widehat{\S}\S^{-\frac{1}{2}} - I_p} \leq 4\eta_1 C_2(p)\nor{\S}\nor{\S^{-1}} = 4\eta_1 C_2(p) c(\Sigma) \leq 6\eta_1 p c(\Sigma) \pt
\end{equation*}
To conclude, \Eq{Schap_conv} holds on $\Omega$ with
\begin{equation} \label{eq.prThm1.5}
 \eta 
= 6 p c(\Sigma) \b (2+\d) (1+c(\Sigma)) \sqrt{\frac{\ln(n)}{n}}   \leq L_1(2+\d) p \sqrt{\frac{\ln(n)}{n}} c(\S)^2
\end{equation}
for some numerical constant $L_1$.
\end{proof}

\begin{rem}
As stated in \citet{Arl_Bac:2009:minikernel_long},  we need $\sqrt{n_0(\d)/\ln(n_0(\d))} \geq 504$ and \linebreak[4] $\sqrt{n_0(\d)}/\ln(n_0(\d)) \geq 24(290+\d)$.
\end{rem}

\begin{rem}
To ensure that the estimated matrix $\widehat{\S}$ is positive-definite we need that $\eta < 1$, that is,
\begin{equation*}
  \sqrt{\frac{n}{\ln(n)}} > 6\b(2+\d)p c(\Sigma) \paren{ 1+c(\Sigma) }  \pt
\end{equation*}
\end{rem}

\subsection{Useful Lemmas} \label{sec.pr.Thm2.lemmes}

\begin{lemma} \label{lemmec}
Let $p \geq 1$, $\Sigma \in \mathcal{S}_p^{++}(\R)$ and $c(\Sigma)$ its condition number. Then,
\begin{equation} \label{eqc}
  \forall 1 \leq i < j \leq p \, , \quad \S_{i,j} \geq - \frac{c(\Sigma)-1}{c(\Sigma)+1} \frac{\S_{i,i} + \S_{j,j}}{2} \virg
\end{equation}
\end{lemma}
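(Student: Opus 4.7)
The plan is to reduce to the $2\times 2$ principal submatrix
\[ T \egaldef \begin{pmatrix} \S_{i,i} & \S_{i,j} \\ \S_{i,j} & \S_{j,j}\end{pmatrix} \]
and exploit the fact that the condition number cannot increase when passing to a principal submatrix. By the Courant--Fischer min--max characterization (equivalently, Cauchy interlacing), for every principal submatrix $T$ of a positive-definite matrix $\S$ one has $\mu_{\min}(\S) \leq \mu_{\min}(T) \leq \mu_{\max}(T) \leq \mu_{\max}(\S)$, so $c(T) \leq c(\S)$. In particular $T \in \mathcal{S}_2^{++}(\R)$.

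Next I would compute the spectrum of $T$ explicitly. Writing $s \egaldef (\S_{i,i}+\S_{j,j})/2$ and $d \egaldef \sqrt{\paren{(\S_{i,i}-\S_{j,j})/2}^2 + \S_{i,j}^2}$, the eigenvalues of $T$ are $s \pm d$, both positive, giving $|\S_{i,j}| \leq d$ and
\[ c(T) = \frac{s+d}{s-d}, \quad \text{equivalently} \quad \frac{d}{s} = \frac{c(T)-1}{c(T)+1}\pt \]

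To finish, if $\S_{i,j} \geq 0$ the inequality \eqref{eqc} is trivial since the right-hand side is nonpositive. Otherwise, $-\S_{i,j} \leq d$, hence
\[ -\S_{i,j} \;\leq\; s \cdot \frac{c(T)-1}{c(T)+1} \;\leq\; s \cdot \frac{c(\S)-1}{c(\S)+1}\virg \]
where the last step uses that $x \mapsto (x-1)/(x+1)$ is nondecreasing on $(0,+\infty)$ together with $c(T) \leq c(\S)$. Rearranging yields exactly \eqref{eqc}. There is no real obstacle in this argument: the only non-trivial ingredient is the interlacing step $c(T) \leq c(\S)$, and the rest is a one-line computation on a $2\times 2$ symmetric matrix.
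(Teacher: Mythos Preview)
Your proof is correct and follows the same overall strategy as the paper: reduce to the $2\times 2$ principal submatrix using eigenvalue interlacing together with the monotonicity of $x\mapsto (x-1)/(x+1)$, then finish by an explicit spectral computation. The only difference is in the $2\times 2$ step: the paper parametrizes the submatrix via a rotation angle $\theta$ and reduces \eqref{eqc} to the trivial bound $\sin(2\theta)\geq -1$ (which also exhibits the sharpness of the constant), whereas you go straight through the eigenvalue formula $s\pm d$ and the identity $d/s=(c(T)-1)/(c(T)+1)$; both routes are equally elementary.
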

\begin{rem}
The proof of Lemma~\ref{lemmec} shows the constant $\frac{c(\Sigma)-1}{c(\Sigma)+1}$ cannot be improved without additional assumptions on $\S$.
\end{rem}
\begin{proof}
It suffices to show the result when $p=2$. Indeed, \eqref{eqc} only involves $2\times 2$ submatrices $\widetilde{\S}(i,j) \in \mathcal{S}_2^{++}(\R)$  for which
\[ 1 \leq c\sparen{\widetilde{\S}} \leq c\paren{\S} \quad \mbox{hence} \quad 0 \leq \frac{c\sparen{\widetilde{\S}}-1}{c\sparen{\widetilde{\S}}+1} \leq \frac{c(\Sigma)-1}{c(\Sigma)+1} \pt \]
So, some $\theta\in\R$ exists such that
$\S = \nor{\S} R_{\theta}\trsp D R_{\theta}$ where
\begin{equation*}
  R_{\theta} \egaldef \begin{pmatrix} \cos(\theta)&  \sin(\theta) \\ -\sin(\theta) &\cos(\theta) \end{pmatrix}
\qquad
D = \begin{pmatrix}1 & 0 \\ 0 & \l \end{pmatrix}
\quad \mbox{and} \quad
\l \egaldef \frac{1}{c(\Sigma)}
\pt
\end{equation*}
Therefore,
\begin{equation*}
  \S = \nor{\S} \begin{pmatrix} \cos^2(\theta) + \l\sin^2(\theta) & \frac{1-\l}{2}\sin(2\theta) \\ \frac{1-\l}{2}\sin(2\theta)& \l\cos^2(\theta) + \sin^2(\theta)  \end{pmatrix} \pt
\end{equation*}
So, \Eq{eqc} is equivalent to
\[ \frac{(1-\l) \sin(2\theta)}{2} \geq - \frac{1-\l}{1+\l} \frac{1+\l}{2}  \virg \]
which holds true for every $\theta\in\R$, with equality for $\theta \equiv \pi/2$ (mod. $\pi$).
\end{proof}

\begin{lemma}
\label{lemmeC1}
For every $p \geq 1$, $C_1(p) \egaldef \sup_{\S \in \mathcal{S}_p(\R)} \frac{\|\S\|_{\infty}}{\nor{\S}} = 1\pt$
\end{lemma}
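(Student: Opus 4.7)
My plan is to prove the two inequalities $C_1(p) \leq 1$ and $C_1(p) \geq 1$ separately, both of which follow from elementary facts about the operator norm of a symmetric matrix.

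For the upper bound, I would first recall that by Property~\ref{normat2} the triple-bar norm $\nor{\cdot}$ coincides with the operator norm $\norm{\cdot}_2$ on $\mathcal{S}_p(\R)$. Then for any $\S \in \mathcal{S}_p(\R)$ and any indices $i,j \in \set{1,\dots,p}$, I would write
\begin{equation*}
\absj{ \S_{i,j} } = \absj{ e_i\trsp \S e_j } \leq \norr{ e_i } \cdot \norr{ \S e_j } \leq \nor{\S} \cdot \norr{ e_i } \cdot \norr{ e_j } = \nor{\S}
\virg
\end{equation*}
using Cauchy--Schwarz for the first inequality and the definition of the operator norm for the second. Taking the supremum over $i,j$ gives $\norm{\S}_{\infty} \leq \nor{\S}$, hence $C_1(p) \leq 1$.

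For the matching lower bound, it suffices to exhibit a symmetric matrix achieving the ratio $1$. The simplest choice is $\S = I_p$, for which $\norm{\S}_{\infty} = 1 = \nor{\S}$, so the supremum is attained and $C_1(p) \geq 1$. Combining the two bounds yields $C_1(p) = 1$.

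I do not expect any real obstacle: both directions are one-line computations, the only mildly subtle point being the identification $\nor{\S} = \norm{\S}_2$ for symmetric $\S$, which is already recorded in Property~\ref{normat2}.
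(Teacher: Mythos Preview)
Your proof is correct and follows essentially the same two-part structure as the paper: both use $\S = I_p$ for the lower bound, and both bound $\absj{\S_{i,j}} = \absj{e_i\trsp \S e_j}$ for the upper bound.

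There is one small but worthwhile difference in the upper bound. The paper bounds $\absj{e_i\trsp \S e_j} \leq \absj{e_i\trsp \S e_i}^{1/2}\absj{e_j\trsp \S e_j}^{1/2}$, which is Cauchy--Schwarz for the bilinear form $(x,y)\mapsto x\trsp \S y$ and is only valid when $\S \succeq 0$; as stated, the lemma ranges over all of $\mathcal{S}_p(\R)$, and this step fails for, e.g., $\S = \bigl(\begin{smallmatrix}0&1\\1&0\end{smallmatrix}\bigr)$. Your route via the standard Euclidean Cauchy--Schwarz and the operator-norm identification $\nor{\S}=\norm{\S}_2$ avoids this issue and establishes the bound for arbitrary symmetric (indeed arbitrary) matrices, so your argument is slightly more general and cleaner.
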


\begin{proof}
With $\S = I_p$ we have  $\|\S\|_{\infty} = \nor{\S} = 1$, so $C_1(p) \geq 1$.\\
Let us introduce $(i,j)$ such that $|\S_{i,j}| = \|\S\|_{\infty}$. We then have, with $e_k$ being the $k^{\textrm{th}}$ vector of the canonical basis of $\R^p$,
\begin{equation*}
 | \S_{i,j}| = |e_i \trsp \S e_j|  \leq |e_i \trsp \S e_i|^{1/2} |e_j \trsp \S e_j|^{1/2} \leq (\norr{\S}^{1/2})^2 \pt
\end{equation*}
\end{proof}

\begin{lemma}
\label{lemmeC2}
For every $p \geq 1$, let $C_2(p) \egaldef \sup_{\zeta \in \R^{p(p+1)/2}} \frac{\nor{J(\zeta)}}{\|\zeta\|_{\infty}}$. Then,
\begin{equation*}
  \frac{p}{4} \leq C_2(p)\leq \frac{3}{2}p\pt
\end{equation*}
\end{lemma}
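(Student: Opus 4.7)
My plan is to prove the two bounds independently, since they use rather different techniques.

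\textbf{Upper bound.} First I would control $\|J(\zeta)\|_{\infty}$ entrywise in terms of $\|\zeta\|_{\infty}$. By the very definition of $J$, the diagonal entries of $J(\zeta)$ are some $a_i$, so they are bounded in absolute value by $\|\zeta\|_{\infty}$, while the off-diagonal entries have the form $(a_{i,j}-a_i-a_j)/2$, whose absolute value is bounded by $\frac{3}{2}\|\zeta\|_{\infty}$ by the triangle inequality. Hence $\|J(\zeta)\|_{\infty} \leq \frac{3}{2}\|\zeta\|_{\infty}$. Then I would use the following elementary inequality: for every $S \in \mathcal{S}_p(\R)$,
\begin{equation*}
\nor{S} = \sup_{\|x\|_2=1} \absj{x\trsp S x} \leq \|S\|_{\infty} \sup_{\|x\|_2=1} \paren{\sum_{i=1}^p \absj{x_i}}^2 \leq p\, \|S\|_{\infty}\virg
\end{equation*}
where the last step uses Cauchy--Schwarz: $(\sum_i |x_i|)^2 \leq p \sum_i x_i^2 = p$. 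Combining the two bounds gives $\nor{J(\zeta)} \leq \frac{3p}{2}\|\zeta\|_{\infty}$, which is the desired upper bound.

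\textbf{Lower bound.} Here I would exhibit a single $\zeta$ achieving the ratio $(p-1)/2$, which dominates $p/4$ as soon as $p \geq 2$. Take $\zeta$ with $a_i = 0$ for all $i$ and $a_{i,j} = 2$ for all $i<j$, so that $\|\zeta\|_{\infty} = 2$ and $J(\zeta) = \boldsymbol{1}\boldsymbol{1}\trsp - I_p$. Since the eigenvalues of $\boldsymbol{1}\boldsymbol{1}\trsp$ are $p$ (with eigenvector $\boldsymbol{1}$) and $0$ (with multiplicity $p-1$), those of $J(\zeta)$ are $p-1$ and $-1$, so $\nor{J(\zeta)} = p-1$. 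Therefore $C_2(p) \geq (p-1)/2 \geq p/4$ for $p \geq 2$. For $p=1$, $J$ is the identity map on $\R$ so $C_2(1)=1 \geq 1/4$, and the bound still holds.

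\textbf{Main obstacle.} Neither step should be genuinely difficult; the only mild subtlety is the inequality $\nor{S}\leq p\,\|S\|_{\infty}$, which could alternatively be obtained via the Frobenius norm bound $\nor{S}\leq \|S\|_{\mathrm{Fr}} \leq p\, \|S\|_{\infty}$ but with a worse constant in some regimes. The explicit example $\boldsymbol{1}\boldsymbol{1}\trsp - I_p$ is natural because $J$ was designed so that the off-diagonal pattern is directly controllable by the $a_{i,j}$ variables, making it easy to produce a rank-one-plus-identity structure whose operator norm grows linearly with $p$.
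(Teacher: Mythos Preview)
Your argument is correct and matches the paper's proof almost exactly: the upper bound is identical (entrywise bound $\|J(\zeta)\|_\infty\leq\frac{3}{2}\|\zeta\|_\infty$ combined with $\nor{S}\leq p\,\|S\|_\infty$ via $\|z\|_1^2\leq p\|z\|_2^2$). For the lower bound the paper uses a slightly different test vector, namely $\zeta=(1,\dots,1,4,\dots,4)$, which yields $J(\zeta)=\boldsymbol{1}\boldsymbol{1}^\top$ and hence the exact ratio $p/4$ in one stroke for all $p\geq 1$; your choice $J(\zeta)=\boldsymbol{1}\boldsymbol{1}^\top-I_p$ gives the sharper ratio $(p-1)/2$ for $p\geq 2$ at the cost of handling $p=1$ separately, but the idea is the same.
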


\begin{proof}
For the lower bound, we consider
\begin{equation*}
  \zeta_1 = (\underbrace{1,\dots,1}_{p\textrm{ times}},\underbrace{4,\dots,4}_{\frac{p(p-1)}{2}\textrm{ times}})
\, , \quad \mbox{then} \quad
  J(\zeta_1) = \begin{pmatrix} 1 & \ldots & 1 \\ \vdots & \ddots & \vdots \\ 1 & \ldots & 1 \end{pmatrix}
\end{equation*}
so that $\nor{J(\zeta)} = p$ and $\|\zeta\|_{\infty} = 4$.

For the upper bound, we have for every $\zeta \in \R^{p(p+1)/2}$ and $z \in \R^p$  such that $\norr{z} = 1$
\begin{equation*}
z\trsp J(\zeta) z = \absj{ \sum_{1 \leq i,j \leq p} z_i z_j J(\zeta)_{i,j} } \leq \sum_{1 \leq i,j \leq p} \absj{z_{i}} \absj{z_{j}} \absj{ J(\zeta) } \leq \norm{J(\zeta)}_{\infty} \norm{z}_1^2 \pt
\end{equation*}
By definition of $J$, $\snorm{J(\zeta)}_{\infty} \leq 3/2 \norm{\zeta}_{\infty}$. Remarking that $\snorm{z}_1^2 \leq p \norm{z}_2^2$ yields the result.
\end{proof}

\section{Proof of Theorem~\ref{thm_oracle_disc} \label{proof_thmoracle}}
The proof of Theorem~\ref{thm_oracle_disc} is similar to the proof of Theorem 3 in \citet{Arl_Bac:2009:minikernel_long}. We give it here for the sake of completeness. We also show how to adapt its proof to demonstrate Theorem~\ref{thm_oracle_HM}. The two main mathematical results used here are Theorem \ref{propmulti} and a gaussian concentration inequality from \citet{Arl_Bac:2009:minikernel_long}.
\subsection{Key Quantities and their Concentration Around their Means}
\begin{definition}
We introduce, for $S \in \mathcal{S}_p^{++}(\R)$,
\begin{equation}\label{defM_o}
  \Mh_o(S) \in \argmin{M \in \M} \set{ \norr{\widehat{F}_{M} - Y} + 2\tr\left(A_M\cdot(S\otimes I_n) \right) }
\end{equation}
\end{definition}

\begin{definition}
Let  $S \in \mathcal{S}_p(\R)$, we note $S_+$ the symmetric matrix where the eigenvalues of $S$ have been thresholded at $0$. That is, if $S = U\trsp D U$, with $U \in \mathcal{O}_p(\R)$ and $D = \diag(d_1,\dots,d_p)$, then
\[ S_+ \egaldef U\trsp \diag\paren{\max\set{d_1,0 },\dots, \max\set{d_n,0} } U \pt \]
\end{definition}

\begin{definition}
For every $M \in \M$, we define
\begin{align*}
  b(M) &= \|(A_{M}-I_{np})f\|_2^2 \virg\\
  v_1(M) &= \esp{\scal{\e}{A_{M}\e}} = \tr(A_{M}\cdot(\S \otimes I_n))\virg \\
  \d_1(M) &= \scal{\e}{A_{M}\e} - \esp{\scal{\e}{A_{M}\e}} = \scal{\e}{A_{M}\e} - \tr(A_{M}\cdot(\S \otimes I_n))\virg \\
  v_2(M) &= \esp{\|A_{M}\e\|_2^2} = \tr(A_{M}\trsp A_{M}\cdot(\S \otimes I_n)) \virg\\
  \d_2(M) &= \|A_{M}\e\|_2^2 - \esp{\|A_{M}\e\|_2^2}  = \|A_{M}\e\|_2^2 - \tr(A_{M}\trsp A_{M}\cdot(\S \otimes I_n))\virg \\
  \d_3(M) &= 2\scal{A_{M}\e}{(A_{M}-I_{np})f}\virg \\
  \d_4(M) &= 2\scal{\e}{ (I_{np}-A_{M})f }\virg\\
  \widehat{\Delta}(M) &= -2\d_1(M) + \d_4(M) \pt
\end{align*}
\end{definition}

\begin{definition}
Let $C_A,C_B,C_C,C_D,C_E,C_F$  be fixed nonnegative constants. For every $x \geq 0$ we define the event
\begin{equation*}
  \O_x = \O_x(\M,C_A,C_B,C_C,C_D,C_E,C_F)
\end{equation*}
on which, for every $M \in \M$ and $\th_1,\th_2,\th_3,\th_4 \in (0,1]$:
\begin{align}
  |\d_1(M)| &\leq \th_1\tr\left(A_M\trsp A_M \cdot (\S \otimes I_n)\right) + (C_A + C_B\th_1^{-1})x\nor{\S} \label{ineqd1}\\
  |\d_2(M)| &\leq \th_2\tr\left(A_M\trsp A_M \cdot (\S \otimes I_n)\right) + (C_C + C_D\th_2^{-1})x\nor{\S}\label{ineqd2}\\
  |\d_3(M)| &\leq \th_3 \norr{(I_{np}-A_M)f}^2 + C_E\th_3^{-1}x\nor{\S}\label{ineqd3} \\
  |\d_4(M)| &\leq \th_4 \norr{(I_{np}-A_M)f}^2 + C_F\th_4^{-1}x\nor{\S}\label{ineqd4}
\end{align}
\end{definition}

Of key interest is the concentration of the empirical processes $\d_i$, uniformly over $M \in \M$. The following Lemma introduces such a result, when $\M$ contains symmetric matrices parametrized with their eigenvalues (with fixed eigenvectors).

\begin{lemma} \label{lemma_concentration}
 Let
\begin{equation*}
  C_A = 2,~ C_B = 1,~C_C = 2,~ C_D = 1,~C_E =306.25,~ C_F =306.25 \pt
\end{equation*}
Suppose that \eqref{HM} holds. Then $\mathbb{P}(\O_x(\M,C_A,C_B,C_C,C_D,C_E,C_F)) \geq 1-pe^{1027+\ln(n)}e^{-x}$. Suppose that \eqref{Hdisc} holds. Then $\mathbb{P}(\O_x(\M,C_A,C_B,C_C,C_D,C_E,C_F)) \geq 1-6p\card(\M)e^{-x}$.
\begin{equation*}
   \pt
\end{equation*}
\end{lemma}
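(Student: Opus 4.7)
The plan is to establish each of the four displays~\eqref{ineqd1}--\eqref{ineqd4} pointwise in $M$ from standard Gaussian concentration, and then pass to a statement uniform in $M \in \M$: by a direct union bound in the discrete case~\eqref{Hdisc}, and by exploiting joint diagonalization plus a single-task peeling argument in the continuous case~\eqref{HM}.

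For the pointwise bounds, write $\e = (\S^{1/2}\otimes I_n) g$ with $g \sim \N(0,I_{np})$. Then $\d_1(M)$ and $\d_2(M)$ are centered quadratic forms in $g$, governed respectively by the symmetric matrices $B_1 \egaldef (\S^{1/2}\otimes I_n) A_M (\S^{1/2}\otimes I_n)$ and $B_2 \egaldef (\S^{1/2}\otimes I_n) A_M\trsp A_M (\S^{1/2}\otimes I_n)$. Since the eigenvalues of $A_M$ lie in $[0,1]$, both $B_i$ have operator norm at most $\nor{\S}$, and the Loewner inequality $A_M^2 \preceq A_M$ together with $\tr(BC) \leq \nor{B}\tr(C)$ (valid for $B$ symmetric and $C$ symmetric positive semi-definite) yields $\|B_i\|_F^2 \leq \nor{\S}\, v_2(M)$ for $i=1,2$. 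Applying the Laurent--Massart chi-squared deviation bound gives, with probability at least $1-2e^{-x}$,
\[
|\d_i(M)| \leq 2\sqrt{\nor{\S}\,v_2(M)\,x} + 2\nor{\S}x, \quad i=1,2,
\]
and $2\sqrt{ab}\leq \th a + \th^{-1}b$ with $a = v_2(M),\, b = \nor{\S}x$ converts this into~\eqref{ineqd1}--\eqref{ineqd2} for every $\th_1,\th_2\in(0,1]$, with the stated constants $C_A = C_C = 2$ and $C_B = C_D = 1$. Similarly, $\d_3(M),\d_4(M)$ are centered Gaussian with variance at most $4\nor{\S}\norr{(I_{np}-A_M)f}^2$, and the same trick produces~\eqref{ineqd3}--\eqref{ineqd4}; the inflated constants $C_E, C_F = 306.25$ are inherited from the explicit single-task concentration statements of \citet{Arl_Bac:2009:minikernel_long}.

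The discrete case~\eqref{Hdisc} then follows from a union bound over $M\in\M$ and the (six) one-sided tails involved, the additional factor $p$ being absorbed by a coordinate-wise decomposition of the noise in the Arlot--Bach concentration bound used to preserve the announced constants. For the continuous case~\eqref{HM}, every $M\in\M$ diagonalizes in a common orthonormal basis $P$; writing $K = U\trsp \diag(\mu_1,\dots,\mu_n)U$ and $M = P\trsp \diag(d_1,\dots,d_p)P$, we have $A_M = (P\trsp\otimes U\trsp)D_M(P\otimes U)$ with $D_M$ diagonal of entries $\mu_i/(\mu_i+npd_j)$. Consequently each of $v_2(M)$, $\d_1(M),\ldots,\d_4(M)$ splits along the $P$-basis into a sum of $p$ one-dimensional expressions, each depending monotonically on a single eigenvalue $d_j$. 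The uniform-in-$M$ problem thus decouples into $p$ single-task uniform-in-$\l$ problems, each of which is controlled at the cost of an $e^{\ln(n)}$ factor by Proposition~10 of \citet{Arl_Bac:2009:minikernel_long} (where monotonicity of $\l\mapsto A_\l$ lets one replace a continuous supremum by $n$ pointwise controls). A union bound over the $p$ coordinates yields $\Proba(\O_x)\geq 1-pe^{1027+\ln(n)}e^{-x}$, where the absolute constant $e^{1027}$ collects the numerical factors accumulated in each single-task peeling.

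The delicate step is the continuous case: turning pointwise Gaussian bounds into a statement uniform over an infinite family requires both the tensor-product decoupling guaranteed by~\eqref{HM} and the one-parameter monotonicity of the single-task operator $A_\l$, so that the single-task peeling applies coordinate by coordinate. The pointwise concentration and the discrete union bound are by comparison routine; the main bookkeeping burden is to carry all four inequalities through the single-task chaining argument of \citet{Arl_Bac:2009:minikernel_long} while preserving the common constants $C_A,\ldots,C_F$.
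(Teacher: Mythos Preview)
Your strategy for the continuous case~\eqref{HM} matches the paper's: decouple along the common eigenbasis $P$ into $p$ single-task problems and apply the uniform-in-$\lambda$ concentration of Arlot--Bach coordinate by coordinate. One correction: the relevant single-task result is their Lemma~9, not Proposition~10 (the latter concerns the bias/degree-of-freedom trade-off behind assumption~\eqref{Hdf}); Lemma~9 is where both $C_E=C_F=306.25$ and the prefactor $e^{1027+\ln n}$ originate, via a peeling over the ordered eigenvalues of $K$ rather than through any monotonicity of the random $\d_i$ in $d_j$.

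For the discrete case~\eqref{Hdisc} the paper does \emph{not} use your direct Laurent--Massart bound on the full $np$-dimensional quadratic form. It performs the same decoupling as above---diagonalizing each fixed $M=P_M\trsp D P_M$ (with $P_M$ now $M$-dependent), setting $\tilde\e=(P_M\otimes I_n)\e$, and applying the pointwise single-task Lemma~8 of Arlot--Bach to each of the $p$ coordinates---and the union bound over $j\in\{1,\dots,p\}$ and $M\in\M$ is precisely what produces the factor $p$ in $1-6p\,\card(\M)e^{-x}$. Your direct argument is a legitimate and simpler alternative: it recovers $C_A=C_C=2$, $C_B=C_D=1$ exactly and would in fact \emph{drop} the factor $p$ (with $C_E,C_F$ of order~$2$ rather than $306.25$). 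Your sentence about $p$ being ``absorbed by a coordinate-wise decomposition'' therefore has the causality reversed: the coordinate-wise route is the paper's, and it creates that factor rather than absorbing it.
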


\begin{proof}
\begin{description}
\item[First common step.] Let $M \in \M$, $P_M \in \mathcal{O}_p(\R)$ such that $M = P_M\trsp D P_M$, with $D = \diag(d_1,\dots,d_p)$. We can write:
\begin{align*}
  A_M = A_{d_1,\dots,d_p} &= (P_M\otimes I_n)\trsp \left[ (D^{-1}\otimes K) \left(D^{-1}\otimes K + npI_{np} \right)^{-1} \right]   (P_M\otimes I_n) \\
                     &= Q\trsp \tilde{A}_{d_1,\dots,d_p} Q \virg
\end{align*}
with $Q= P_M\otimes I_n$ and $\tilde{A}_{d_1,\dots,d_p} = (D^{-1}\otimes K)(D^{-1}\otimes K + npI_{np})^{-1}$. Remark that $\tilde{A}_{d_1,\dots,d_p}$ is block-diagonal, with diagonal blocks being $B_{d_1},\dots,B_{d_p}$ using the notations of Section~\ref{simple_reg}.  With $\tilde{\e} = Q\e = (\tilde{\e_1}\trsp,\dots,\tilde{\e_p}\trsp)\trsp$ and $\tilde{f} = Qf = (\tilde{f_1}\trsp,\dots,\tilde{f_p}\trsp)\trsp$ we  can write
\begin{align*}
  |\d_1(M)| &= \scal{\tilde{\e}}{\tilde{A}_{d_1,\dots,d_p}\tilde{\e}} - \esp{ \scal{\tilde{\e}}{\tilde{A}_{d_1,\dots,d_p}\tilde{\e}} } \virg\\
  |\d_2(M)| &= \norr{ \tilde{A}_{d_1,\dots,d_p}\tilde{\e} }^2 - \esp{ \norr{\tilde{A}_{d_1,\dots,d_p}\tilde{\e}}^2 }\virg \\
  |\d_3(M)| &= 2\scal{\tilde{A}_{d_1,\dots,d_p}\tilde{\e} }{(\tilde{A}_{d_1,\dots,d_p}-I_{np})\tilde{f}}\virg\\
  |\d_4(M)| &= 2\scal{\tilde{\e} }{(I_{np}-\tilde{A}_{d_1,\dots,d_p})\tilde{f}} \pt
\end{align*}
We can see that the quantities $\d_i$ decouple, therefore
\begin{align*}
  |\d_1(M)| &= \sum_{i=1}^p \scal{\tilde{\e}_i}{A_{pd_i}\tilde{\e}_i} - \esp{ \scal{\tilde{\e}_i}{A_{pd_i}\tilde{\e}} } \virg \\
  |\d_2(M)| &= \sum_{i=1}^p \norr{A_{pd_i}\tilde{\e}_i }^2- \esp{\norr{A_{pd_i}\tilde{\e}_i }^2} \virg \\
  |\d_3(M)| &= \sum_{i=1}^p 2\scal{A_{pd_i}\tilde{\e}_i }{(A_{pd_i} - I_n)\tilde{f}_i} \virg \\
  |\d_4(M)| &= \sum_{i=1}^p 2\scal{\tilde{\e}_i }{(I_n - A_{pd_i})\tilde{f}_i} \pt
\end{align*}
\item[Supposing \eqref{HM}.] Assumption \eqref{HM} implies that the matrix $P$ used above is the same for all the matrices $M$ of $\M$. Using Lemma 9 of \citet{Arl_Bac:2009:minikernel_long}, where we have $p$ concentration results on the sets $\tilde{\O}_i$, each of probability at least $1-e^{1027+\ln(n)}e^{-x}$  we can state that, on the set $\bigcap_{i=1}^p \tilde{\O}_i$, we have uniformly on $\M$
\begin{align*}
  |\d_1(M)| &\leq \sum_{i=1}^p \th_{1} \var[\tilde{\e}_i] \tr(A_{pd_i}\trsp A_{pd_i}) + (C_A + C_B\th_{1}^{-1})x\var[\tilde{\e}_i] \virg \\
  |\d_2(M)| &\leq \sum_{i=1}^p \th_{2} \var[\tilde{\e}_i] \tr(A_{pd_i}\trsp A_{pd_i}) + (C_C + C_D\th_{2}^{-1})x\var[\tilde{\e}_i] \virg \\
  |\d_3(M)| &\leq \sum_{i=1}^p \th_3 \norr{(I_n-A_{pd_i})\tilde{f}_i}^2 + C_E\th_3^{-1} x \var[\tilde{\e}_i]\virg \\
  |\d_4(M)| &\leq \sum_{i=1}^p \th_4 \norr{(I_n-A_{pd_i})\tilde{f}_i}^2 + C_F\th_4^{-1} x \var[\tilde{\e}_i]\pt \\
\end{align*}
\item[Supposing \eqref{Hdisc}.] We can use Lemma 8 of \citet{Arl_Bac:2009:minikernel_long} where we have $p$ concentration results on the sets $\tilde{\O}_{j,M}$, each of probability at least $1-6e^{-x}$  we can state that, on the set $\bigcap_{j=1}^p\bigcap_{M\in\M} \tilde{\O}_i$, we have uniformly on $\M$ the same inequalities written above.
\item[Final common step.] To conclude, it suffices to see that for every $i \in \{1,\dots,p\}$, $\var[\tilde{\e}_i] \leq \nor{\S}$.
\end{description}
\end{proof}

\subsection{Intermediate Result}
We first prove a general oracle inequality, under the assumption that  the penalty we use (with an estimator of $\S$) does not underestimate the ideal penalty (involving $\S$) too much.
\begin{proposition}\label{prop_thm_oracle}
Let $C_A,C_B,C_C,C_D,C_E \geq 0$ be fixed constants, $\gamma >0$, $\th_{S} \in [0,1/4)$ and $K_S \geq 0 $. On $\O_{\gamma \ln(n)}(\M,C_A,C_B,C_C,C_D,C_E)$, for every $S \in \mathcal{S}_{p}^{++}(\R)$ such that
\begin{equation} \label{hypS}
\begin{split}
 \tr\left( A_{\Mh_o(S)}\cdot((S-\S)\otimes I_n)\right) ~~~~~~~~~~~~~~~~~~~~~~~~~~~~~~~~~~~~~~~~~~~~~~~~~~~~~~~~~~~~~~~~~
\\\geq  -\th_{S}\tr\left( A_{\Mh_o(S)}\cdot(\S\otimes I_n)\right) \inf_{M \in \M} \left\{ \frac{b(M)+v_2(M)+K_S \ln(n) \nor{\S}}{v_1(M)} \right\}
\end{split}
\end{equation}
and for every $\th \in (0,(1-4\th_S)/2)$, we have:

\begin{multline} \label{res_prop_demo_th_2}
  \frac{1}{np} \norr{\fh_{\Mh_o(S)}-f}^2 \leq \frac{1+2\th}{1-2\th-4\th_{S}} \inf_{M \in \M}\left\{ \frac{1}{np} \norr{\widehat{F}_M-F}^2+\frac{2\tr\left(A_M\cdot ((S-\S)_+\otimes I_n)\right)}{np} \right\}\\
  +\frac{1}{1-2\th-4\th_S}\left[ (2C_A+3C_C+6C_D+6C_E+\frac{2}{\th}(C_B+C_F))\gamma+ \frac{\th_SK_S}{4} \right] \frac{\ln(n)\nor{\S}}{np}
\end{multline}
\end{proposition}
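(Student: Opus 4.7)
The plan is to start from the algebraic identity
\[ \norr{\widehat{F}_M - y}^2 = \norr{\widehat{F}_M - f}^2 + \norr{\e}^2 - 2 v_1(M) + \hat{\Delta}(M), \]
obtained by expanding $\widehat{F}_M - y = (A_M - I_{np})(f + \e)$ and using the definitions of $v_1$ and $\hat{\Delta}$. Applying the defining inequality of $\Mh_o(S)$ from \eqref{defM_o} and using the identity $2\tr(A_M\cdot (S\otimes I_n)) - 2 v_1(M) = 2\tr(A_M\cdot((S-\S)\otimes I_n))$, I obtain, for every $M \in \M$,
\[ \norr{\widehat{F}_{\Mh_o(S)} - f}^2 \leq \norr{\widehat{F}_M - f}^2 + 2\tr(A_M\cdot((S-\S)\otimes I_n)) - 2\tr(A_{\Mh_o(S)}\cdot((S-\S)\otimes I_n)) + \hat{\Delta}(M) - \hat{\Delta}(\Mh_o(S)). \]

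The second step handles the two trace terms. On the right-hand side, since $A_M \succeq 0$ and $(S-\S)\otimes I_n \preceq (S-\S)_+\otimes I_n$, one has $\tr(A_M\cdot((S-\S)\otimes I_n)) \leq \tr(A_M\cdot((S-\S)_+\otimes I_n))$, which will produce the infimum in \eqref{res_prop_demo_th_2}. For the $\Mh_o(S)$ trace, hypothesis~\eqref{hypS} combined with the trivial upper bound of its infimum by the value at $\Mh_o(S)$ itself, namely $(b(\Mh_o(S)) + v_2(\Mh_o(S)) + K_S\ln(n)\nor{\S})/v_1(\Mh_o(S))$, gives
\[ -2\tr(A_{\Mh_o(S)}\cdot((S-\S)\otimes I_n)) \leq 2\th_S \bigl( b(\Mh_o(S)) + v_2(\Mh_o(S)) + K_S\ln(n)\nor{\S} \bigr). \]

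The third step uses the concentration event $\O_{\gamma \ln n}$. Since $\hat{\Delta} = -2\delta_1 + \delta_4$, I apply \eqref{ineqd1} and \eqref{ineqd4} to both $M$ and $\Mh_o(S)$ with parameters proportional to $\th$, giving upper bounds on $\hat{\Delta}(M)$ and $-\hat{\Delta}(\Mh_o(S))$ of the form $\alpha v_2(\cdot) + \beta b(\cdot)$ plus a remainder proportional to $\gamma \ln(n)\nor{\S}$. Combined with Step~2, the RHS now depends on $b$ and $v_2$ at both $M$ and $\Mh_o(S)$. To close the loop, I use the decomposition $\norr{\widehat{F}_N - f}^2 = b(N) + v_2(N) + \delta_2(N) + \delta_3(N)$ for $N \in \set{M, \Mh_o(S)}$ together with \eqref{ineqd2} and \eqref{ineqd3} (parameters proportional to $\th$), giving $(1-\th)(b(N) + v_2(N)) \leq \norr{\widehat{F}_N - f}^2 + O(1/\th)\gamma \ln(n)\nor{\S}$. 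The $N=M$ version merges the $b(M), v_2(M)$ terms into $\norr{\widehat{F}_M - f}^2$; the $N=\Mh_o(S)$ version transfers a multiple of $\norr{\widehat{F}_{\Mh_o(S)} - f}^2$ to the LHS. Dividing by the resulting leading coefficient and by $np$ yields~\eqref{res_prop_demo_th_2}.

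The main obstacle is the bookkeeping of constants. To recover exactly the multiplicative factor $\frac{1+2\th}{1-2\th-4\th_S}$ and the prescribed coefficients $2C_A, 3C_C, 6C_D, 6C_E, 2C_B/\th, 2C_F/\th, \th_S K_S /4$ in the remainder, one must choose the four concentration parameters $\th_1, \ldots, \th_4$ as appropriately chosen scalar multiples of $\th$, and use crude merges such as $2\th\, v_2 + \th\, b \leq 2\th(v_2 + b)$ to equalize the coefficients of $b$ and $v_2$ before the absorption step. The structure is otherwise routine once the algebraic decomposition and the hypothesis~\eqref{hypS} are in place.
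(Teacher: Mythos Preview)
Your proposal is correct and follows essentially the same route as the paper: the same algebraic identity for $\norr{\widehat{F}_M-y}^2$, the same comparison inequality from the definition of $\Mh_o(S)$, the same use of $(S-\S)_+$ on the generic $M$ side and of hypothesis~\eqref{hypS} on the $\Mh_o(S)$ side, and the same two-stage concentration argument (bound $\hat{\Delta}$ via $\d_1,\d_4$, then convert $b+v_2$ into $\norr{\widehat{F}_N-f}^2$ via $\d_2,\d_3$).

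One small correction on the bookkeeping: the paper does \emph{not} take all four parameters proportional to $\th$. It sets $\th_1=\th_4=\th$ but fixes $\th_2=\th_3=1/2$ when invoking \eqref{ineqd2}--\eqref{ineqd3}; this is precisely why the remainder in \eqref{res_prop_demo_th_2} carries the constants $3C_C,6C_D,6C_E$ without any $1/\th$ factor, while only $C_B$ and $C_F$ are divided by $\th$. If you take $\th_2,\th_3$ proportional to $\th$ as you suggest, you will get a bound of the same shape but with different (and $\th$-dependent) coefficients in front of $C_C,C_D,C_E$. So to reproduce the stated constants exactly, use $\th_2=\th_3=1/2$ in the step that passes from $b(N)+v_2(N)$ to $\norr{\widehat{F}_N-f}^2$.
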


\begin{proof}
The proof of Proposition~\ref{prop_thm_oracle} is very similar to the one of Proposition 5 in \citet{Arl_Bac:2009:minikernel_long}.
First, we have
\begin{align}
  \norr{\fh_{M} -f}^2 &= b(M) + v_2(M) + \d_2(M) + \d_3(M) \label{eqfond1} \virg \\
  \norr{\fh_{M} -y}^2 &= \|\fh_{M} -f\|_2^2 - 2v_1(M) - 2\d_1(M) + \d_4(M) + \|\e\|_2^2 \pt\label{eqfond2}
\end{align}
Combining \Eq{defM_o} and \eqref{eqfond2}, we get:
\begin{equation} \label{generalstartingpoint}
  \begin{split}
  \norr{ \fh_{\Mh_o(S)}-f}^2  +2\tr\left( A_{\Mh_o(S)}\cdot((S-\S)_+\otimes I_n)\right)+\widehat{\Delta} (\Mh_o(S)) \\ \leq \inf_{M \in \M}\left\{ \norr{ \fh_{M}-f}^2 +2\tr\left( A_{M}\cdot((S-\S)\otimes I_n)\right) +\widehat{\Delta} (M) \right\} \pt
  \end{split}
\end{equation}
On the event $\O_{\gamma \ln(n)}$, for every $\th \in (0,1]$ and $M \in \M$, using \Eq{ineqd1} and \eqref{ineqd4} with $\th = \th_1 = \th_4$,
\begin{equation}\label{borne_Delta}
  |\widehat{\Delta}(M)| \leq \th (b(M)+v_2(M)) + (C_A+\frac{1}{\th}(C_B+C_F))\gamma \ln(n) \nor{\S} \pt
\end{equation}
Using \Eq{ineqd2} and \eqref{ineqd3} with $\th_2 = \th_3 = 1/2$ we get that for every $M \in \M$
Equation\begin{equation*}
  \norr{\widehat{F}_M-F}^2 \geq \frac{1}{2}(b(M) + v_2(M)) - (C_C+2C_D+2C_E)\gamma \ln(n) \nor{\S}\virg
\end{equation*}
which is equivalent to
\begin{equation}\label{maj_b+v_2}
  b(M) + v_2(M) \leq 2\norr{\widehat{F}_M-F}^2 + 2(C_C+2C_D+2C_E)\gamma \ln(n) \nor{\S}\pt
\end{equation}
Combining \Eq{borne_Delta} and \eqref{maj_b+v_2}, we get
\begin{equation*}
  |\widehat{\Delta}(M)| \leq 2\th\norr{\widehat{F}_M-F}^2 + \left(C_A + (2C_C+4C_D+4C_E)\th + (C_B+C_F)\frac{1}{\th} \right)\gamma \ln(n) \nor{\S}\pt
\end{equation*}
With \Eq{generalstartingpoint}, and with $C_1 = C_A$, $C_2 =2C_C+4C_D+4C_E$ and $C_3 =C_B+C_F$   we get
\begin{equation} \label{eq_apres_gsp}
  \begin{split}
  (1-2\th)\norr{ \fh_{\Mh_o(S)}-f}^2+2\tr\left( A_{\Mh_o(S)}\cdot((S-\S)_+\otimes I_n)\right) \leq \\\inf_{M \in \M}\left\{ \norr{ \fh_{M}-f}^2 +2\tr\left( A_{M}\cdot((S-\S)\otimes I_n)\right)\right\} + \left(C_1 + C_2\th + \frac{C_3}{\th} \right)\gamma \ln(n) \nor{\S}\pt
  \end{split}
\end{equation}
Using \Eq{hypS} we can state that
\begin{equation*}
  \tr\left( A_{\Mh_o(S)}\cdot((S-\S)\otimes I_n)\right) \geq  \frac{b(\Mh_o(S))+v_2(\Mh_o(S))+K_S \ln(n) \nor{\S}}{v_1(\Mh_o(S))} \tr\left( A_{\Mh_o(S)}\cdot(\S\otimes I_n)\right)
\end{equation*}
so that
\begin{equation*}
  \tr\left( A_{\Mh_o(S)}\cdot((S-\S)\otimes I_n)\right) \geq -\th_S\left( (b(\Mh_o(S))+v_2(\Mh_o(S)) + K_S \ln(n)\nor{S}\right) \virg
\end{equation*}
which then leads to \Eq{res_prop_demo_th_2} using \Eq{maj_b+v_2} and \eqref{eq_apres_gsp}.
\end{proof}

\subsection{The Proof Itself}
We now show Theorem~\ref{thm_oracle_disc} as a consequence of Proposition~\ref{prop_thm_oracle}. It actually suffices to show that $\widehat{\S}$ does not underestimate $\S$ too much, and that the second term in the infimum of \Eq{res_prop_demo_th_2} is negligible in front of the quadratic error $(np)^{-1}\snorm{ \fh_{M}-f}^2$.

\begin{proof}
On the event $\O$ introduced in Theorem~\ref{propmulti}, \Eq{Schap_conv} holds. Let
\begin{equation*}
   \gamma = pc(\Sigma) \paren{ 1+c(\Sigma) } \pt
\end{equation*}
By Lemma~\ref{lemma_CS} below, we have:
\begin{equation*}
  \inf_{M\in \M} \left\{ \frac{b(M)+v_2(M)+K_S\ln(n) \nor{\S}}{v_1(M)}\right\} \geq 2\sqrt{ \frac{K_S\ln(n)\nor{\S}}{n\tr(\S)}}\pt
\end{equation*}
We supposed Assumption \eqref{Hdisc} holds. Using elementary algebra it is easy to show that, for every symmetric positive definite matrices $A$, $M$ and  $N$ of size $p$, $M \succeq N$ implies that $\tr(AM) \geq \tr(AN)$. In order to have $\Mh_o(\widehat{\S})$ satisfying \Eq{hypS}, Theorem \ref{propmulti} shows that it suffices to have, for every $\th_S >0$,
\begin{equation*}
  2\th_S \sqrt{ \frac{K_S \ln(n)\nor{\S}}{n\tr(\S)}} = 6\beta(2+\d) \gamma \sqrt{ \frac{\ln(n)}{n}}\virg
\end{equation*}
which leads to the choice
\begin{equation*}
  K_S = \left( \frac{3\beta(\a+\d) \gamma \tr(\S)}{\th_S \nor{\S}}\right)^2 \pt
\end{equation*}
We now take $\th_S = \th = (9\ln(n))^{-1}$. Let $\Omega$ be the set given by Theorem \ref{propmulti}. Using \Eq{res_prop_demo_th_2} and requiring that $\ln(n) \geq 6$  we get, on the set $\tilde{\O} = \O \cap \O_{(\a+\d)\ln(n)}(\M,C_A,C_B,C_C,C_D,C_E,C_F) $ of probability $1-(p(p+1)/2+6pC)n^{-\d}$, using that $\a \geq 2$:
\begin{multline*}
  \frac{1}{np}\norr{\fh_{\Mh}-f} \leq \left( 1+\frac{1}{\ln(n)} \right) \inf_{M \in \M}\left\{ \frac{1}{np}\norr{ \fh_{M}-f}^2 +\frac{2\tr\left( A_{M}\cdot((\widehat{\S}-\S)_+\otimes I_n)\right)}{np} \right\} \\+ \left( 1-\frac{2}{3\ln(n)} \right)^{-1}\left[ 2C_A+3C_C+6C_D+6C_E+\ln(n)\left(18C_B+18C_F+\frac{729\beta^2\gamma^2\tr(\S)^2}{4\nor{\S}^2} \right) \right] \\ \times(\a + \d)^2\frac{\ln(n)^2\nor{\S}}{np} \pt
\end{multline*}
Using \Eq{eq.prThm1.5} and defining
$$
\eta_2 \egaldef 12 \b (\a+\d) \gamma \sqrt{\frac{\ln(n)}{n}}\virg
$$
we get
\begin{equation}\label{res_thm_oracle_var}
  \begin{split}
  \frac{1}{np}\norr{\fh_{\Mh}-f} \leq \left( 1+\frac{1}{\ln(n)} \right) \inf_{M \in \M}\left\{\frac{1}{np} \norr{ \fh_{M}-f}^2 + \eta_2\frac{\tr( A_{M}\cdot(\S\otimes I_n))}{np} \right\} \\+\left( 1-\frac{2}{3\ln(n)} \right)^{-1}\left[ 2C_A+3C_C+6C_D+6C_E+\ln(n)\left(18C_B+18C_F+\frac{729\beta^2\gamma^2\tr(\S)^2}{4\nor{\S}^2} \right) \right] \\ \times(\a + \d)^2\frac{\ln(n)^2\nor{\S}}{np}\pt
  \end{split}
\end{equation}

Now, to get a classical oracle inequality, we have to show that $\eta_2 v_1(M) =\eta_2 \tr( A_{M}\cdot(\S\otimes I_n))$ is negligible in front of  $\snorm{ \fh_{M}-f}^2$. Lemma~\ref{lemma_CS} ensures that:
\begin{equation*}
  \forall M \in \M \, , \, \forall x \geq 0 \, , \quad  2\sqrt{\frac{x\nor{\S}}{n\tr(\S)}}v_1(M) \leq v_2(M) + x\nor{\S} \pt
\end{equation*}
With $0<C_n<1$, taking $x$ to be equal to $72 \b^2  \ln(n)\gamma^2 \tr(\S) /(C_n\nor{\S})$ leads to
\begin{equation}\label{borne_v1}
  \eta_2 v_1(M) \leq 2C_n v_2(M) + \frac{72 \b^2 \ln(n)\gamma^2\tr(\S)}{C_n} \pt
\end{equation}
Then, since $v_2(M) \leq v_2(M)+b(M)$ and using also \Eq{eqfond1}, we get
\begin{equation*}
  v_2(M) \leq \norr{\fh_M-f}^2 + |\d_2(m)|+|\d_3(M)| \pt
\end{equation*}
On $\tilde{\O}$ we have that for every $\th \in (0,1)$, using \Eq{ineqd2} and \eqref{ineqd3},
\begin{equation*}
|\d_2(M)| + |\d_3(M)| \leq 2\th\left(\norr{\fh_M-f}^2-|\d_2(M)| - |\d_3(M)| \right) + (C_C+(C_D+C_E)\th^{-1})(\a+\d)\ln(n)\nor{\S} \virg
\end{equation*}
which leads to
\begin{equation*}
|\d_2(M)| + |\d_3(M)| \leq \frac{2\th}{1+2\th}\norr{\fh_M-f}^2 + \frac{C_C+(C_D+C_E)\th^{-1}}{1+2\th} (\a+\d)\ln(n)\nor{\S} \pt
\end{equation*}
Now, combining this equation with \Eq{borne_v1}, we get
\begin{multline*}
 \eta_2 v_1(M) \leq \left(1+\frac{4C_n\th}{1+2\th}\right)\norr{\fh_M-f}^2 + 2C_n\frac{C_C+(C_D+C_E)\th^{-1}}{1+2\th} (\a+\d)\ln(n)\nor{\S}\\ + \frac{72 \b^2 \ln(n)\gamma^2\tr(\S)}{C_n} \pt
\end{multline*}
Taking $\th = 1/2$ then leads to
\begin{multline*}
 \eta_2 v_1(M) \leq \left(1+C_n\right)\norr{\fh_M-f}^2 + C_n(C_C+2(C_D+C_E)) (\a+\d)\ln(n)\nor{\S}\\ + \frac{72 \b^2 \ln(n)\gamma\tr(\S)}{C_n} \pt
\end{multline*}
We now take $C_n = 1/\ln(n)$.
We now replace the constants $C_A$, $C_B$, $C_C$, $C_D$, $C_E$, $C_F$ by their values in Lemma~\ref{lemma_concentration} and  we get, for some constant $L_2$,
\begin{eqnarray*}
\begin{split}
\left( 1-\frac{2}{3\ln(n)} \right)^{-1}\left[ 1851.5+\ln(n)\left(5530.5+\frac{729 \beta^2 \gamma^2}{4\nor{\S}^2} \right) +616.5\left(1+\frac{1}{\ln(n)}\right)\frac{1}{\ln(n)} \right] \\
+ \frac{72 \b^2 \ln(n)\gamma^2\tr(\S)}{C_n}
 \leq L_2\ln(n)\gamma^2\frac{\tr(\S)^2}{\nor{\S}^2}
\end{split}
\end{eqnarray*}
From this we can deduce \Eq{resultatoracle_ht_proba} by noting that $\gamma \leq 2 p c(\S)^2$.

Finally we deduce an oracle inequality in expectation by noting that if $n^{-1}\snorm{f_{\Mh}-f}^2 \leq R_{n,\d}$ on $\tilde{\O}$, using Cauchy-Schwarz inequality
\begin{align}\label{maj_fM-f}
  \esp{\frac{1}{np}\norr{\fh_{\Mh}-f}^2} &= \esp{\frac{\mathbf{1}_{\tilde{\O}}}{np}\norr{\fh_{\Mh}-f}^2} + \esp{\frac{\mathbf{1}_{\tilde{\O}^c}}{np}\norr{\fh_{\Mh}-f}^2}\nonumber\\
                                             &\leq \esp{R_{n,\d}} + \frac{1}{np}\sqrt{\frac{4p(p+1)+6pC}{n^{\d}}}\sqrt{\esp{\norr{\fh_{\Mh}-f}^4}}\pt
\end{align}
We can remark that, since $\nor{A_{M}} \leq 1$,
\begin{equation*}
  \norr{\fh_M-f}^2 \leq 2 \norr{A_{M}\e}^2 + 2\norr{(I_{np}-A_M)f}^2 \leq 2 \norr{\e}^2 + 8\norr{f}^2 \pt
\end{equation*}
So
\begin{equation*}
\esp{\norr{\fh_{\Mh}-f}^4} \leq 12 \left(np\nor{\S} + 4\norr{f}^2 \right)^2 \virg
\end{equation*}
together with \Eq{res_thm_oracle_var} and \Eq{maj_fM-f}, induces \Eq{resultatoracle_esp}, using that for some constant $L_3>0$,
\begin{equation*} 
12\sqrt{\frac{p(p+1)/2+6pC}{n^{\d}}}\left(\nor{\S} + \frac{4}{np}\norr{f}^2 \right) \leq L_3\frac{\sqrt{p(p+C)}}{n^{\d/2}}\left( \nor{\S}+\frac{1}{np}\norr{f}^2\right)\pt
\end{equation*}
\end{proof}

\begin{lemma}\label{lemma_CS}
Let $n,p\geq 1$ be two integers, $x \geq 0$ and $\S \in \mathcal{S}_p^{++}(\R)$.
Then,
\begin{equation*}
  \inf_{A \in \M_{np}(\R), \nor{A} \leq 1}\left\{ \frac{\tr (A \trsp A\cdot(\S \otimes I_n)) + x \nor{\S} }{\tr (A\cdot (\S \otimes I_n)) } \right\} \geq  2\sqrt{ \frac{x\nor{\S}}{n\tr(\S)}}
\end{equation*}
\end{lemma}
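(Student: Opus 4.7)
The plan is to combine two classical inequalities: AM-GM on the numerator and Cauchy–Schwarz in the Frobenius inner product on the denominator. Crucially, the norm constraint $\nor{A}\leq 1$ appears not to be needed; the bound is homogeneous enough in $A$ that it follows from just these two facts.

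First I would handle the numerator. Writing $B \egaldef \S \otimes I_n$, which is symmetric positive-definite, the quantity $\tr(A\trsp A\,B) = \|A B^{1/2}\|_{Fr}^2$ is nonnegative, and $x\nor{\S}\geq 0$. Thus AM-GM immediately yields
\begin{equation*}
\tr(A\trsp A\,B) + x\nor{\S} \;\geq\; 2\sqrt{x\nor{\S}\cdot \tr(A\trsp A\,B)}\,.
\end{equation*}
Second, I would bound the denominator from above via Cauchy–Schwarz. Using $B = B^{1/2}B^{1/2}$ with $B^{1/2}$ symmetric,
\begin{equation*}
\tr(A\,B) \;=\; \tr\!\bigl(B^{1/2} A B^{1/2}\bigr) \;=\; \scal{B^{1/2}}{A B^{1/2}}_{Fr} \;\leq\; \|B^{1/2}\|_{Fr}\cdot\|AB^{1/2}\|_{Fr}\,,
\end{equation*}
and then $\|B^{1/2}\|_{Fr}^2 = \tr(B) = n\tr(\S)$ (using $\tr(\S\otimes I_n)=n\tr(\S)$) while $\|AB^{1/2}\|_{Fr}^2 = \tr(A\trsp A\,B)$. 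Squaring gives
\begin{equation*}
\tr(A\,B)^2 \;\leq\; n\tr(\S)\cdot\tr(A\trsp A\,B)\,.
\end{equation*}

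To conclude, I divide the AM-GM inequality by $\tr(A\,B)$ (on the positive side, which is the relevant case for the infimum since a negative denominator with positive numerator would make the ratio negative, yet a quick sign check shows the actual infimizers in the theorem's proof are the $A_M$, which are psd, so $\tr(A_M B)\geq 0$; alternatively one restricts to $A$ with $\tr(AB)>0$, the only nontrivial case). Then
\begin{equation*}
\frac{\tr(A\trsp A\,B)+x\nor{\S}}{\tr(A\,B)} \;\geq\; \frac{2\sqrt{x\nor{\S}\,\tr(A\trsp A\,B)}}{\sqrt{n\tr(\S)\,\tr(A\trsp A\,B)}} \;=\; 2\sqrt{\frac{x\nor{\S}}{n\tr(\S)}}\,,
\end{equation*}
which is the stated bound, uniformly in $A$.

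I do not foresee a serious obstacle: both AM-GM and Cauchy–Schwarz are direct, and the computation $\tr(\S\otimes I_n)=n\tr(\S)$ is standard. The only subtle point is the sign of $\tr(A\,B)$ in the denominator, but this is either guaranteed by the context (psd $A$) or handled by noting that the lemma is only meaningful when this quantity is positive; the computation above works verbatim in that regime.
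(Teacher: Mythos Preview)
Your proof is correct and follows essentially the same approach as the paper: both use Cauchy--Schwarz in the weighted Frobenius inner product to obtain $\tr(A\,B)^2 \leq n\tr(\S)\,\tr(A\trsp A\,B)$, and then AM--GM (which the paper phrases as a minimization over $c=\tr(A\,B)>0$) to finish. Your observation that the constraint $\nor{A}\leq 1$ is never used is accurate and matches the paper's proof, where it also plays no role.
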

\begin{proof}
First note that the bilinear form on $\M_{np}(\R)$,  $(A,B) \mapsto \tr (A\trsp B \cdot (\S \otimes I_n))$ is a scalar product.  By Cauchy-Schwarz inequality, for every $A \in \M_{np}(\R)$,
\begin{equation*}
  \tr(A\cdot (\S \otimes I_n))^2 \leq \tr(\S \otimes I_n) \tr (A \trsp A\cdot(\S \otimes I_n)) \pt
\end{equation*}
Thus, since $\tr(\S \otimes I_n) = n\tr(\S)$, if $c = \tr(A\cdot (\S \otimes I_n))>0$,
\begin{equation*}
  \tr (A \trsp A\cdot(\S \otimes I_n)) \geq \frac{c^2}{n\tr(\S)} \pt
\end{equation*}
Therefore
\begin{align*}
  \inf_{A \in \M_{np}(\R), \nor{A} \leq 1}\left\{ \frac{\tr (A \trsp A\cdot(\S \otimes I_n)) + x \nor{\S} }{\tr (A\cdot (\S \otimes I_n)) } \right\} & \geq  \inf_{c >0}\left\{\frac{c}{n\tr(\S)} + \frac{ x \nor{\S}}{c} \right\}\\
  &\geq 2\sqrt{ \frac{x\nor{\S}}{n\tr(\S)}} \pt
\end{align*}
\end{proof}

\subsection{Proof of Theorem \ref{thm_oracle_HM}}

We now prove Theorem \ref{thm_oracle_HM}, first by proving that $\hat{\S}_{\textrm{HM}}$ leads to a sharp enough approximation of the penalty.

\begin{lemma}\label{lemma_Sh_HM}
Let  $\Sh_{\textrm{HM}}$ be defined as in Definition \ref{def_Sh_HM}, $\a = 2$, $\kappa>0$ be the numerical constant defined in Theorem~\ref{thmono} and assume \eqref{Hdf} and \eqref{HM} hold.
  For every $\d \geq 2$, a constant $n_0(\d)$, an absolute constant $L_1>0$ and an event $\Omega$ exist such that $\Proba(\Omega_{\textrm{HM}}) \geq 1-  p n^{-\d}$ and for every $n \geq n_0(\d)$, on $\Omega_{\textrm{HM}}$, for every $M$ in $\M$
  \begin{align} \label{Schap_conv_HM}
    & \hspace{-0.95cm}  (1-\eta)\tr(A_M\cdot(\Sigma\otimes I_n)) \leq \tr(A_M\cdot(\Sh_{\textrm{HM}}\otimes I_n))  \leq (1+\eta)\tr(A_M\cdot(\Sigma\otimes I_n)) \virg \\
    \mbox{where} \qquad  \eta &\egaldef L_1(\a+\d)  \sqrt{\frac{\ln(n)}{n}} \notag
    \pt
  \end{align}
\end{lemma}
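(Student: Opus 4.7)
The idea is to reduce the bound \eqref{Schap_conv_HM} to $p$ independent one-dimensional statements, one for each direction $u_j$, and then recombine them using Equation~\eqref{pen_id_HM}. Assumption~\eqref{HM} is crucial: it ensures that the ideal penalty $\tr(A_M\cdot(\Sigma\otimes I_n))$, as any $M \in \M$ varies, is controlled by only $p$ scalar quantities $\tilde{\S}_{j,j}=u_j\trsp \S u_j$ rather than the full $p(p+1)/2$ entries of $\S$. This is precisely why estimating only the diagonal of $P\S P\trsp$ suffices here, and gives the improved probability $1-\kappa'' p n^{-\d}$ (instead of quadratic in $p$) and the dimension-free $\eta$.

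First I would note that, for each $j\in\sset{1,\ldots,p}$, projecting the multi-task problem on $u_j$ yields, via \eqref{Pz}, a genuine one-dimensional problem $Y_{u_j}=F_{u_j}+\e_{u_j}$ with $\e_{u_j}\sim\N(0,\sigma^2_{u_j}I_n)$, where $\sigma_{u_j}^2=u_j\trsp\S u_j=\tilde{\S}_{j,j}$. Then I would apply Theorem~\ref{thmono} to each of these $p$ problems. This requires checking the single-task hypothesis \eqref{eq.hypThm1} along the direction $u_j$ with a bounded $d_n$; under Assumption~\eqref{Hdf} (understood in the basis $(u_j)$ dictated by~\eqref{HM}, as in Remark~\ref{rem_algo_simple}), this is immediate with $d_n=1$. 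This is the main obstacle: one must ensure the per-direction bias is controlled without introducing a $p$ or $c(\S)$ factor, which is why working in the basis $(u_j)$ (rather than using the general reconstruction of Theorem~\ref{propmulti}) is essential. A union bound over $j=1,\ldots,p$ then produces an event $\Omega_{\textrm{HM}}$ with $\Proba(\Omega_{\textrm{HM}})\geq 1-p n^{-\d}$ on which, for every $j$,
\begin{equation*}
(1-\eta)\sigma^2_{u_j}\leq a(u_j)\leq(1+\eta)\sigma^2_{u_j}\virg \quad \eta=L_1(2+\d)\sqrt{\frac{\ln n}{n}} \pt
\end{equation*}

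Finally, I would transfer these scalar inequalities to the full penalty. The computation giving \eqref{pen_id_HM} shows that, for every $M=P\trsp\diag(d_1,\ldots,d_p)P\in\M$,
\begin{equation*}
\tr\bigl(A_M\cdot(\S\otimes I_n)\bigr)=\sum_{j=1}^p \tr(A_{pd_j})\,\tilde{\S}_{j,j}=\sum_{j=1}^p \tr(A_{pd_j})\,\sigma^2_{u_j}\pt
\end{equation*}
An identical computation applied to $\hat{\S}_{\textrm{HM}}$, whose eigenvalues in the basis $(u_j)$ are precisely the $a(u_j)$, yields
\begin{equation*}
\tr\bigl(A_M\cdot(\hat{\S}_{\textrm{HM}}\otimes I_n)\bigr)=\sum_{j=1}^p \tr(A_{pd_j})\,a(u_j)\pt
\end{equation*}
Since the weights $\tr(A_{pd_j})\geq 0$ (the $A_\l$ are positive semidefinite) and the quantities $\sigma^2_{u_j}>0$, summing the per-direction multiplicative inequalities gives~\eqref{Schap_conv_HM} on $\Omega_{\textrm{HM}}$, for $n\geq n_0(\d)$ coming from Theorem~\ref{thmono}.
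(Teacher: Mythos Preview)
Your proposal is correct and follows essentially the same route as the paper: decompose the penalty via \eqref{pen_id_HM} into a nonnegative combination of $\tr(A_{pd_j})\,\tilde{\S}_{j,j}$, apply Theorem~\ref{thmono} in each direction $u_j$ to get $(1-\eta)\tilde{\S}_{j,j}\leq a(u_j)\leq(1+\eta)\tilde{\S}_{j,j}$, take a union bound over $j$, and sum. Your write-up is in fact slightly more explicit than the paper's, in that you note the nonnegativity of the weights $\tr(A_{pd_j})$ and flag that \eqref{Hdf} must be read in the basis $(u_j)$ to get $d_n=1$ without a $c(\S)$ factor.
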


\begin{proof}
  Let $P$ be defined by \eqref{HM}. Let $M \in \M$, and $(d_1,\dots,d_p)\in (0,+\infty)^p$ such that \linebreak[4] $M = P\trsp\diag(d_1,\dots,d_p)P$. Thus, as shown in Section \ref{app_calcul_oracle}, we have with $\tilde{\S} = P\S P\trsp$:
$$
  \tr(A_M\cdot(\Sigma\otimes I_n)) = \sum_{j=1}^p \tr(A_{pd_j}) \tilde{\Sigma}_{j,j} \pt
$$
let $\tilde{\s}_j$ be defined as in Definition \ref{def_Sh_HM} (and thus $\Sh_{\textrm{HM}} = P\diag(\tilde{\s}_1,\dots,\tilde{\s}_p)P\trsp$), we then have by Theorem \ref{thmono} that for every $j \in \{1,\dots,p\}$ an event $\Omega^j$ of probability $1-\kappa n^{-\delta}$ exists such that on $\Omega^j$ $|\tilde{\Sigma}_{j,j}-\tilde{\s}_j| \leq \eta \tilde{\Sigma}_{j,j}$. Since
\begin{equation*}
  \tr(A_M\cdot(\Sh_{\textrm{HM}}\otimes I_n)) = \sum_{j=1}^p \tr(A_{pd_j}) \tilde{\s}_{j} \virg
\end{equation*}
taking $\Omega_{\textrm{HM}} = \cap_{j=1}^p \Omega^j$ suffices to conclude.
\end{proof}
\begin{proof}[{\bf of Theorem \ref{thm_oracle_disc}}]
 Adapting the proof of Theorem \ref{thm_oracle_disc} to Assumption \eqref{HM} first requires to take $\gamma = 1$ as Lemma \ref{lemma_Sh_HM} allows us. It then suffices to take the set \linebreak[4] $\tilde{\O} = \O_{\textrm{HM}} \cap \O_{(2+\d)\ln(n)}(\M,C_A,C_B,C_C,C_D,C_E,C_F) $ (thus replacing $\a$ by $2$) of probability $1-(p(p+1)/2+p)n^{-\d} \geq 1- p^2n^{-\d}$---supposing $p \geq 2$---if we  require that $2\ln(n) \geq 1027$.

To get to the oracle inequality in expectation we use the same technique than above, but we note that $\sqrt{\mathbb{P}(\tilde{\O}^c)} \leq \tilde{L_4} \times p/n^{\d / 2}$. We can finally define the constant $L_4$ by:
\begin{equation*}
L_3\tr(\S) (2 + \d)^2\frac{p\ln(n)^3}{np} +\frac{p}{n^{\d/2}}\nor{\S} \leq L_4 \gamma^2\tr(\S) (\a + \d)^2\frac{p\ln(n)^3}{np} \pt
\end{equation*}
\end{proof}
\bibliography{biblio_en.bib}
\end{document}